\tikzstyle directed=[postaction={decorate,decoration={markings,
		mark=at position .65 with {\arrow{latex}}}}]
\newcommand{\proofstep}[1]{%
	\par
	\addvspace{\medskipamount}
	\textit{#1\@addpunct{.}}\enspace\ignorespaces
}
\DeclareMathOperator{\Id}{Id} 
\DeclareMathOperator{\Graph}{Graph} 
\DeclareMathOperator{\MatrixSet}{\mathcal{M}} 
\DeclareMathOperator{\Sp}{Sp} 
\DeclareMathOperator{\Opegaldef}{\overset{\text{def}}{=}} 
\DeclareMathOperator{\Ker}{Ker}
\DeclareMathOperator{\RealPart}{Re}
\DeclareMathOperator*{\DirectSumEV}{\oplus}
\newcommand{\N}{\mathbb{N}} 
\newcommand{\R}{\mathbb{R}} 
\newcommand{\C}{\mathbb{C}} 
\newcommand{\dd}{\mathrm{d}} 
\newcommand{\abs}[1]{\left\lvert#1\right\rvert} 
\newcommand{\norme}[1]{\left\lVert#1\right\rVert} 
\newcommand{\normesub}[1]{\left\lVvert#1\right\rVvert} 
\newcommand{\motdef}[1]{\emph{#1}\xspace} 
\newcommand{\egaldef}{\overset{\text{def}}{=}} 
\newcommand{\fonction}[5]{%
	#1\colon\begin{array}{l|rcl}
		& #2 & \to & #3 \\
		& #4 & \mapsto & #5
	\end{array}
}
\newcommand{\Diff}[1]{D#1}
\newcommand{\DiffPartielGeneric}[1]{D_{#1}}
\newcommand{\DiffPoint}[2]{D#1(#2)}	
\newcommand{\DiffSup}[2]{D^{#2}#1}
\newcommand{\DiffPointSup}[3]{D^{#3}#1(#2)}
\newcommand{\DiffPartiel}[2]{D_{#2}#1}
\newcommand{\DiffPartielPoint}[3]{D_{#3}#1(#2)}	
\newcommand{\DiffPartielPointSup}[4]{D_{#4}^{#3}#1(#2)}
\newcommand{\Integral}[4]{\int_{#1}^{#2} #3 \, \dd #4}
\newcommand{\MyBigSet}[2]{\left\{#1 \: | \: #2\right\}}
\newcommand{\OpenInterval}[2]{\left(#1,#2\right)}
\DeclareMathOperator{\II}{II} 
\DeclareDocumentCommand\vpWH{ m o }{%
	\mu_{#1}\IfNoValueF{#2}{(#2)}		
}
\DeclareDocumentCommand\vpWHK{ m o }{%
	\bar{\mu}_{#1} \IfNoValueF {#2} {\left(#2\right)}		
}
\DeclareDocumentCommand\vpWHR{ m o }{%
	\tilde{\mu}_{#1}\IfNoValueF{#2}{\left(#2\right)}		
}
\newcommand{\cKQ}{\mathcal{K}}
\DeclareDocumentCommand\Kparam{ o }{%
	\omega\IfNoValueF{#1}{(#1)}		
}
\DeclareDocumentCommand\KparamEuclid{ o }{%
	v_E\IfNoValueF {#1} {\left(#1\right)}		
}
\DeclareDocumentCommand\KparamCurv{ o }{%
	v_{\cKQ}\IfNoValueF {#1} {\left(#1\right)}		
}
\newcommand{\BphaseQ}{\mathcal{B}}
\DeclareDocumentCommand\ActionNoProper{ o }{%
	\mathscr{T}\IfNoValueF {#1} {_{#1}}		
}
\DeclareDocumentCommand\orbitIIu{ o }{%
	\mathcal{O}_{\II}^u\IfNoValueF {#1} {\left(#1\right)}		
}
\DeclareDocumentCommand\orbitIIs{ o }{%
	\mathcal{O}_{\II}^{s_1}\IfNoValueF {#1} {\left(#1\right)}		
}
\DeclareDocumentCommand\orbitIIss{ o }{%
	\mathcal{O}_{\II}^{s_2}\IfNoValueF {#1} {\left(#1\right)}		
}
\newcommand{\LocalCoorPathMapGeneric}{\Phi}
\DeclareDocumentCommand\LocalCoorPathMap{ m o }{%
	\LocalCoorPathMapGeneric_{#1}^{\IfNoValueF{#2}{#2}}
}
\newcommand{\EpochFirstReturnMapGeneric}{\rho}
\DeclareDocumentCommand\EpochFirstReturnMap{ m o }{%
	\EpochFirstReturnMapGeneric_{#1}^{\text{\IfNoValueF {#2} {#2}}}
}
\DeclareDocumentCommand\AppliPremierRetourEreI{ o }{%
	\hat{\rho}_{\IfNoValueF{#1} {#1} }
}
\newcommand{\VarStableLoc}[2]{W_{#1}^{\text{loc}}(#2)}
\DeclareDocumentCommand\VarStableLoc{ m o }{%
	W_{#1}^{\text{loc}}\IfNoValueF {#2} {\left(#2\right)}
}
\DeclareDocumentCommand\cKgraph{ o }{%
	\hat{\cKQ}_{\text{graph}}\IfNoValueF {#1} {\left(#1\right)}
}
\DeclareDocumentCommand\cSgraph{ o }{%
	\Sigma^{s}_{\text{graph}}\IfNoValueF {#1} {\left(#1\right)}
}
\DeclareDocumentCommand\LocalNormFunc{ m o }{%
	\gamma_{#1}\IfNoValueF{#2}{\left(#2\right)}		
}
\DeclareDocumentCommand\SectionLocale{ m m o m o }{%
	\Sigma_{#1}^{\text{#2\IfNoValueF {#3} {,#3}}}\left(#4\IfNoValueF {#5} {, #5}\right)		
}
\DeclareDocumentCommand\SectionLocaleGeom{ o m o m o }{%
	\mathbf{\Sigma}_{#2}^{\text{#4\IfNoValueF {#1} {,#1}}}\IfNoValueF {#3} {\left(#3\IfNoValueF {#5} {,#5}\right)}		
}
\DeclareDocumentCommand\SectionLocaleCanonique{ m o m o }{%
	\tilde{\Sigma}_{#1}^{\text{#3\IfNoValueF {#2} {,#2}}} \IfNoValueF {#4} {\left(#4\right)} 		
}
\DeclareDocumentCommand\TangCone{ m m o }{%
	V_{#1}^{c}\left(#2\IfNoValueF {#3} {,#3}\right)		
}
\DeclareDocumentCommand\TransCone{ m m o }{%
	V_{#1}^{\bot}\left(#2\IfNoValueF {#3} {,#3}\right)		
}
\DeclareDocumentCommand\FPgraph{ o }{%
	\hat{\zeta}\IfNoValueF {#1} {_{#1}}
}
\DeclareDocumentCommand\EuclidDistance{ o }{%
	d_E\IfNoValueF {#1} {\left(#1\right)}	
}
\DeclareDocumentCommand\CurvilignDistance{ o }{%
	d_{\cKQ}\IfNoValueF {#1} {\left(#1\right)}	
}
\DeclareDocumentCommand\dBphaseQ{ o }{%
	d_{\BphaseQ}\IfNoValueF {#1} {\left(#1\right)}	
}
\newcommand{\SM}[1]{W^{s}(#1)}
\newcommand{\LSM}[2]{W^{s}_{#2}\left(#1\right)}
\newcommand{\SMexp}[2]{W^{s,#2}\left(#1\right)}
\newcommand{\LSMexp}[3]{W^{s,#2}_{#3}(#1)}
\newcommand{\VFParam}{X}
\newcommand{\VFParamStraight}{\tilde{\VFParam}}
\newcommand{\OriginDiff}{A}
\newcommand{\OriginDiffStraight}{\tilde{\OriginDiff}}
\newcommand{\VariablePhaseSpace}{x}
\newcommand{\DimPhaseSpace}{n}
\newcommand{\PhaseSpaceVF}{\R^\DimPhaseSpace}
\newcommand{\Parameter}{\mu}
\newcommand{\DimSetParameter}{s}
\newcommand{\SetParameter}{\R^\DimSetParameter}
\newcommand{\StableSpace}{F}
\newcommand{\UnstableSpace}{G}
\newcommand{\VarPhaseSpaceStable}{z}
\newcommand{\VarPhaseSpaceUnstable}{v}
\newcommand{\LineartPart}{A}
\newcommand{\StableMatrix}{\LineartPart_{|\StableSpace}}
\newcommand{\StableMatrixStraight}{\tilde{\LineartPart}_{1}}
\newcommand{\UnstableMatrix}{\LineartPart_{|\UnstableSpace}}
\newcommand{\UnstableMatrixStraight}{\tilde{\LineartPart}_{2}}
\newcommand{\DimStableSpaceStraight}{p}
\newcommand{\StableSpaceStraight}{\R^\DimStableSpaceStraight}
\newcommand{\DimUnstableSpaceStraight}{q}
\newcommand{\UnstableSpaceStraight}{\R^\DimUnstableSpaceStraight}
\newcommand{\BorneVPStable}{\alpha}
\newcommand{\BorneVPUnstable}{\beta}
\DeclareMathOperator{\OpFixedPoint}{\mathcal{O}}
\newcommand{\OpFamily}[2]{\OpFixedPoint^{#1}_{#2}}
\newcommand{\OpFamilyGeneric}[1]{\OpFixedPoint^{#1}}
\newcommand{\VarConditionInitiale}{\omega}
\DeclareMathOperator{\OpFixedPointII}{\Lambda}
\newcommand{\OpFamilyII}[1]{\OpFixedPointII_{#1}}
\newcommand{\ExpDecaySpeed}{\gamma}
\newcommand{\FunctionSpace}[2]{H^{#1}_{#2}}
\newcommand{\FunctionSpaceGeneric}[1]{H^{#1}}
\newcommand{\NonLinearPartEDI}{f}
\newcommand{\NonLinearPartEDII}{g}
\newcommand{\SMTGraphMap}{\phi}
\newcommand{\SMTGraphMapStraight}{\tilde{\SMTGraphMap}}
\newcommand{\SMTGraphMapLocal}{\phi}
\newcommand{\SMTGraphMapII}{\psi}
\newcommand{\NormSupDerivativesVFParamGeneric}{M}
\newcommand{\NormeSupHigherDerivativesLoc}[2]{\NormSupDerivativesVFParamGeneric_{#1}\left(#2\right)}
\newcommand{\NormeSupHigherDerivativesBounded}[2]{\NormSupDerivativesVFParamGeneric_{#1}\left(#2\right)}
\newcommand{\NormeSupHigherDerivativesLocMax}[2]{\bar{\NormSupDerivativesVFParamGeneric}_{#1}\left(#2\right)}
\newcommand{\NormeSupHigherDerivativesBoundedMax}[2]{\bar{\NormSupDerivativesVFParamGeneric}_{#1}\left(#2\right)}
\newcommand{\SpectreUp}[1]{\lambda_{\text{max}}\left(#1\right)}
\newcommand{\SpectreDown}[1]{\lambda_{\text{min}}\left(#1\right)}
\newcommand{\DistanceBordSpectre}[2]{d_{#1}(#2)}
\newcommand{\FPStableGeneric}{z^{*}}
\newcommand{\FPStable}[2]{\FPStableGeneric_{#1,#2}}
\newcommand{\FPUnstableGeneric}{v^{*}}
\newcommand{\FPUnstable}[2]{\FPUnstableGeneric_{#1,#2}}
\newcommand{\Angle}[1]{\sphericalangle\left(#1\right)}
\newcommand{\ConstanteInegTriangInverse}[1]{m\left(#1\right)}
\newcommand{\ConstanteControleExpMatrix}[1]{M(#1)}
\newcommand{\ConstanteControleExpMatrixNormeII}[1]{\hat{M}(#1)}
\newcommand{\ConstantEstimateStableManifoldThm}{C}
\DeclareDocumentCommand\ConstI{ o }{%
	\ConstantEstimateStableManifoldThm_{1\IfNoValueF {#1} {,#1}} 		
}
\DeclareDocumentCommand\ConstII{ o }{%
	\ConstantEstimateStableManifoldThm_{2\IfNoValueF {#1} {,#1}} 		
}
\DeclareDocumentCommand\ConstIII{ o }{%
	\ConstantEstimateStableManifoldThm_{3\IfNoValueF {#1} {,#1}} 		
}
\newcommand{\PHSIsoStraight}{L}
\newcommand{\TailleTronc}{\xi}
\newcommand{\TroncVFParam}[1]{\VFParam^{#1}}
\newcommand{\gphs}[1]{\sigma\left(#1\right)}
\newcommand{\ExpDecaySpeedInterval}{I_{\OriginDiff}}
\newcommand{\TailleBoule}{r}
\newcommand{\FixedVF}{Y}
\newcommand{\OpenSetDefinVF}{\Omega}
\newcommand{\TraceOpenSetDefinVF}{\OpenSetDefinVF_0}
\newcommand{\SLC}{\xi}
\newcommand{\SLCsmall}{\tilde{\SLC}}
\newcommand{\EVII}[1]{\beta(#1)}
\newcommand{\SMF}{\mathcal{F}^{s}}
\theoremstyle{plain}
\newtheorem{thm}{Theorem}[section]
\newtheorem{prop}[thm]{Proposition}
\newtheorem{lemme}[thm]{Lemma}
\newtheorem{hypo}{Hypothesis}
\newtheorem*{addendum}{Addendum}
\theoremstyle{remark}
\newtheorem{rem}[thm]{Remark}
\theoremstyle{definition}
\newtheorem{defin}[thm]{Definition}
\numberwithin{equation}{section} 
\title{Some estimates for the stable manifold theorem}
\author{Tom \textsc{Dutilleul}
}
\address{LAGA (UMR 7539), Université Paris \textsc{XIII}, Sorbonne Paris Cité.}
\email{\nolinkurl{dutilleul@math.univ-paris13.fr}}
\urladdr{\url{https://www.tomdutilleul.com/}}
\date{\today}
\newcounter{MyCounterProof}
\newcounter{MySubCounterProof}
\begin{document}
	\begin{abstract}
We investigate the standard stable manifold theorem in the context of a partially hyperbolic singularity of a vector field depending on a parameter. We prove some estimates on the size of the neighbourhood where the local stable manifold is known to be the graph of a function, and some estimates about the derivatives of all orders of this function. We explicitate the different constants arising and their dependance on the vector field.
As an application, we consider the situation where a vector field vanishes on a submanifold $N$ and contracts a direction transverse to $N$. We prove some estimates on the size of the neighbourhood of $N$ where there are some charts straightening the stable foliation while giving some controls on the derivatives of all orders of the charts. 
	\end{abstract}

\maketitle



\section{Introduction}
Fix a smooth vector field $\FixedVF$ on a Riemannian manifold $M$ and let $x$ be a \motdef{singularity} of the vector field $\FixedVF$, that is, a point of $M$ such that $\FixedVF(x)=0$. For any $\ExpDecaySpeed <0$ and for any $\eta >0$, the \motdef{local $\ExpDecaySpeed$-stable set} $\LSMexp{x,\FixedVF}{\ExpDecaySpeed}{\eta}$ of $x$ for $\FixedVF$ is the set of points in $M$ whose forward orbit under the flow of $\FixedVF$ stay in the $\eta$-neighbourhood of $x$ and converge to $x$ faster than $e^{\ExpDecaySpeed t}$ as $t\to + \infty$ (see~\eqref{e.local-gamma-stable-manifold}). This is one of the most fundamental objects when one tries to understand the asymptotic dynamics of the flow of $\FixedVF$ near $x$. Its geometry is very well understood in the context of a hyperbolic (or partially hyperbolic) singularity, as explained in what follows.

\subsection{Stable manifold theorem}

Assume that the singularity $x$ is partially hyperbolic: up to replacing $\FixedVF$ by $-\FixedVF$, this means that there exists a non trivial decomposition $T_x M = \StableSpace  \oplus \UnstableSpace $ of the tangent space at $x$ such that $\StableSpace$ and $\UnstableSpace$ are stabilized by $\DiffPoint{\FixedVF}{x}$ and there exists a negative real $\ExpDecaySpeed$ such that the real parts of the eigenvalues of $\DiffPoint{\FixedVF}{x}_{|\StableSpace}$ are strictly less than $\ExpDecaySpeed$ and the real parts of the eigenvalues of $\DiffPoint{\FixedVF}{x}_{|\UnstableSpace}$ are strictly more than $\ExpDecaySpeed$. In this context, the Stable Manifold Theorem asserts that for any positive $\eta$ small enough, the local $\ExpDecaySpeed$-stable set $\LSMexp{x,X}{\ExpDecaySpeed}{\eta}$ is an embedded submanifold of $M$ tangent to $\StableSpace$ at $x$, called the local $\ExpDecaySpeed$-stable manifold. It can be seen as the graph of a smooth map $\phi: U \subset \StableSpace \to V \subset \UnstableSpace$, from a neighbourhood $U$ of $0$ in $\StableSpace$ to a neighbourhood $V$ of $0$ in $\UnstableSpace$, satisfying $\phi(0)=0$ and $\DiffPoint{\phi}{0}=0$. Moreover, if $\FixedVF$ depends smoothly on a parameter $\Parameter \in \SetParameter$, then this is also the case for the submanifold described earlier, that is, $\phi_\Parameter(z)=\phi(z,\Parameter)$ is smooth as a map of the two variables $z \in U$, $\Parameter \in \SetParameter$.

Though this standard theorem has been presented and generalized in many articles (see \emph{e.g.} \cite{IrwinStableManifoldTheorem1970}, \cite{HirschStablemanifoldshyperbolic1970}) and books (see \emph{e.g.} \cite{KatokIntroductionModernTheory1997}, \cite{IrwinSmoothDynamicalSystems2001}, \cite{RuelleElementsDifferentiableDynamics1989}, \cite{RobinsonDynamicalSystemsStability1999}, \cite{brin_introduction_2002} for classical introductory readings and \cite{HirschInvariantManifolds2006} for a deeper treatment but a tougher reading), we have not found a version of this result that gives explicit estimates on the $C^k$-norms of $\phi(z,\Parameter)$ ($k\in \N^*$) and on the size of the neighbourhood where these estimates hold true. In most of the books, authors state that if $\FixedVF$ is $C^r$, then $\phi_\Parameter$ is also $C^r$ and $\Parameter \mapsto \phi_\Parameter$ is a continuous map from $\SetParameter$ to the space of $C^r$ maps equipped with the $C^r$ topology, which is a weaker statement than saying that $\phi(z,\Parameter)$ is smooth. The closest result to what we were looking for has been found in \autocite{chua_methods_1998} (chapter $5$). They prove rigorously that the map $\phi(z,\Parameter)$ is smooth but do not provide explicit estimates. This is the reference that motivated the writing of this paper, whose purpose is to give such estimates. Since we are only interested by local estimates, we may (and do) assume that $M = \PhaseSpaceVF$ and $x=0$ (it suffices to work in a local chart and to multiply the vector field by a smooth plateau map in the neighbourhood of $0$). 

The classical stable manifold theorem (which can be found in the above references) can be stated as following:
\begin{thm}[Stable manifold theorem with parameters]\label{thm.variete-gamma-stable-local-version-intro}
	Let $\VFParam=(\VFParam_\Parameter)_{\Parameter \in \SetParameter}$ be a smooth family of smooth vector fields on $\PhaseSpaceVF$ such that
\begin{enumerate}
	\item For every $\Parameter \in \SetParameter$, the origin of $\PhaseSpaceVF$ is a singularity of $\VFParam_\Parameter$, \emph{i.e.}
	\begin{equation*}
	\VFParam_\Parameter(0)=0
	\end{equation*}
	\item The endomorphism $\OriginDiff:=\DiffPartielPoint{\VFParam}{0,0}{\VariablePhaseSpace}$ admits a \motdef{partially hyperbolic splitting} $R^n = F \oplus G$ such that
	\begin{equation*}
	\SpectreUp{\OriginDiff_{|\StableSpace}} < \min \left(0 , \SpectreDown{\OriginDiff_{|\UnstableSpace}}\right)
	\end{equation*}
\end{enumerate}
where $\SpectreUp{\OriginDiff_{|\StableSpace}}$ (resp. $\SpectreDown{\OriginDiff_{|\UnstableSpace}}$) denotes the max (resp. min) of the real parts of the eigenvalues of $\OriginDiff_{|\StableSpace}$ (resp. $\OriginDiff_{|\UnstableSpace}$).
Let $\ExpDecaySpeed \in \OpenInterval{\SpectreUp{\OriginDiff_{|\StableSpace}}}{\min\left(0,\SpectreDown{\OriginDiff_{|\UnstableSpace}}\right)}$.
Then there exists $\epsilon > 0$ and $\eta >0$ such that for every $\Parameter \in B_{\SetParameter}(0,\epsilon)$, the local $\ExpDecaySpeed$-stable set $\LSMexp{0,\VFParam_\Parameter}{\ExpDecaySpeed}{\eta}$ is the graph of a smooth function $\SMTGraphMap_\Parameter: \StableSpace \to \UnstableSpace$ intersected with the ball $B_{\PhaseSpaceVF}(0,\eta)$. Moreover, the map $\SMTGraphMap: (z,\Parameter) \in \StableSpace \times B_{\SetParameter}(0,\epsilon) \mapsto \SMTGraphMap_\Parameter(z) \in \UnstableSpace$ is smooth, for every $\Parameter \in B_{\SetParameter}(0,\epsilon)$, $\SMTGraphMap_\Parameter(0)=0$ and $\DiffPoint{\SMTGraphMap_0}{0}=0$.
\end{thm}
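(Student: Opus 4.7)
\proofstep{Proof proposal}

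The plan is to follow the classical Lyapunov-Perron approach, enriched to handle the parameter $\Parameter$ jointly with the initial condition $z$. First, I would perform a linear change of coordinates on $\PhaseSpaceVF$ that puts the splitting $F \oplus G$ in block-diagonal form, and write $\VFParam_\Parameter(\VariablePhaseSpace) = \OriginDiff \VariablePhaseSpace + R(\VariablePhaseSpace, \Parameter)$ where $R$ is smooth, $R(0,0)=0$ and $\DiffPartielPoint{R}{0,0}{\VariablePhaseSpace}=0$. Then I would truncate $R$ by multiplying it with a smooth plateau function supported in a ball $B_{\PhaseSpaceVF}(0,\TailleTronc)$ and a plateau in $\Parameter$ supported in $B_{\SetParameter}(0,\epsilon)$; choosing $\TailleTronc$ and $\epsilon$ sufficiently small, the resulting truncated nonlinearity $\tilde{R}$ is globally defined, smooth, and has arbitrarily small global Lipschitz constant in $\VariablePhaseSpace$. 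Inside the ball of radius $\eta < \TailleTronc$, $\tilde{R}$ agrees with $R$, so the local stable manifolds of the two vector fields coincide, and it is enough to construct the $\ExpDecaySpeed$-stable manifold of the globally truncated system.

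Next, I would set up the Lyapunov-Perron integral equation. For an orbit $\VariablePhaseSpace(t) = (z(t), v(t)) \in \StableSpace \oplus \UnstableSpace$ of the truncated system decaying at rate $e^{\ExpDecaySpeed t}$, variation of constants together with the decay requirement on the $G$-component yields the fixed-point equation
\begin{equation*}
\VariablePhaseSpace(t) = e^{t\OriginDiff} \bigl(z_0,0\bigr) + \int_0^t e^{(t-s)\OriginDiff} \Pi_{\StableSpace}\tilde{R}(\VariablePhaseSpace(s),\Parameter)\,\dd s - \int_t^{+\infty} e^{(t-s)\OriginDiff} \Pi_{\UnstableSpace}\tilde{R}(\VariablePhaseSpace(s),\Parameter)\,\dd s
\end{equation*}
on the Banach space $\FunctionSpaceGeneric{\ExpDecaySpeed} = \{\VariablePhaseSpace \in C^0([0,+\infty),\PhaseSpaceVF) : \sup_{t\geq 0}e^{-\ExpDecaySpeed t}\|\VariablePhaseSpace(t)\| < \infty\}$. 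Using the spectral gap between $\SpectreUp{\OriginDiff_{|\StableSpace}}$ and $\ExpDecaySpeed$ (and between $\ExpDecaySpeed$ and $\SpectreDown{\OriginDiff_{|\UnstableSpace}}$), together with the smallness of $\Lip \tilde{R}$, this operator is a uniform contraction in $(z_0,\Parameter)$, and the Banach fixed point theorem produces a unique solution $\VariablePhaseSpace(\cdot; z_0, \Parameter) \in \FunctionSpaceGeneric{\ExpDecaySpeed}$ depending continuously on $(z_0,\Parameter)$. The graph map is then defined by $\SMTGraphMap(z_0, \Parameter) \egaldef \Pi_{\UnstableSpace}\,\VariablePhaseSpace(0; z_0, \Parameter)$.

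To upgrade continuity in $(z_0,\Parameter)$ to smoothness, I would appeal to a $C^k$-dependence theorem for uniform contractions (fiber contraction / $C^k$ section theorem): treating $(z_0,\Parameter)$ as a parameter in the contraction operator on $\FunctionSpaceGeneric{\ExpDecaySpeed}$, one differentiates the fixed-point equation formally and checks that each formal derivative of order $k$ satisfies an analogous fixed-point equation that is itself a contraction in a suitable weighted space $\FunctionSpaceGeneric{k\ExpDecaySpeed}$ (the decay rate must be shrunk by a factor $k$ to absorb the polynomial growth coming from the $k$-th differential of the flow of $\OriginDiff$). This requires $\ExpDecaySpeed$ to lie strictly in the gap and, for each $k$, a slightly smaller choice of $\TailleTronc$ and $\epsilon$ so that the spectral condition remains valid; since $\SMTGraphMap$ does not depend on the cutoff inside $B_{\PhaseSpaceVF}(0,\eta)$, this does not change the germ and we obtain $\SMTGraphMap \in C^{\infty}$ on $\StableSpace \times B_{\SetParameter}(0,\epsilon)$.

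Finally, the properties $\SMTGraphMap_\Parameter(0) = 0$ and $\DiffPoint{\SMTGraphMap_0}{0} = 0$ follow directly from the fixed-point construction: the zero curve is the unique fixed point at $z_0=0$, giving $\SMTGraphMap_\Parameter(0)=0$; differentiating the fixed-point equation at $(z_0,\Parameter)=(0,0)$ and using that $\DiffPartielPoint{R}{0,0}{\VariablePhaseSpace}=0$ shows that $\DiffPoint{\SMTGraphMap_0}{0}$ must be the zero linear map from $\StableSpace$ to $\UnstableSpace$. The main obstacle I expect is the bookkeeping required in the $C^k$-dependence step: one must track how the required spectral gap, the allowed truncation size $\TailleTronc$, and the allowed parameter range $\epsilon$ shrink as $k$ grows, and verify that the resulting sequence of fixed-point problems yields genuine $C^k$-differentiability rather than merely continuous dependence of the $C^{k-1}$-jet, which is the point at which many textbook presentations become sketchy and which the later sections of the paper will need to quantify explicitly.
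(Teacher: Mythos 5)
Your proposal is sound in outline but takes a genuinely different route from the paper, and the difference is precisely the point the paper highlights in its introductory remark. You propose the classical fiber-contraction / $C^k$-section strategy: establish the fixed point as a uniform contraction parametrized by $(z_0,\Parameter)$, then upgrade continuity to $C^k$-dependence by formally differentiating the fixed-point equation $k$ times and running a secondary contraction argument, accepting (as you say) that the truncation size $\TailleTronc$, the parameter range $\epsilon$, and possibly the function space may have to shrink with $k$. The paper deliberately avoids this. It proves \emph{directly} (Lemma~\ref{lemma.smoothness-FP}) that the Lyapunov--Perron operator $\OpFamilyGeneric{\ExpDecaySpeed}: \FunctionSpaceGeneric{\ExpDecaySpeed}\times\StableSpaceStraight\times\SetParameter \to \FunctionSpaceGeneric{\ExpDecaySpeed}$ is $C^\infty$ with explicitly computed $k$-th differentials $\OpFamilyII{k}$, all living in the \emph{same} space $\FunctionSpaceGeneric{\ExpDecaySpeed}$, and then deduces smooth dependence of the fixed point in one shot from the global inverse (implicit) function theorem applied to $G^\ExpDecaySpeed((z,v),\VarConditionInitiale,\Parameter)=(\OpFamily{\ExpDecaySpeed}{\VarConditionInitiale,\Parameter}(z,v)-(z,v),\VarConditionInitiale,\Parameter)$. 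No induction on $k$ with shrinking truncation, no fiber contraction, no auxiliary spaces $\FunctionSpaceGeneric{k\ExpDecaySpeed}$. This is the simplification the paper claims over \cite{chua_methods_1998}; your proposal essentially reproduces the older route.

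Two smaller remarks. First, your heuristic justification for passing to $\FunctionSpaceGeneric{k\ExpDecaySpeed}$, ``to absorb the polynomial growth coming from the $k$-th differential of the flow of $\OriginDiff$,'' is not right as stated: the flow of $\OriginDiff$ is linear in the initial condition, so its differentials of order $\geq 2$ vanish, and the polynomial-in-$t$ factors from Jordan blocks are $k$-independent and are absorbed by working with $\ExpDecaySpeed$ strictly in the open gap, not by rescaling the decay rate. In fact, the opposite phenomenon works in your favor: the $l$-th differential of the nonlinearity contracted against $l$ copies of curves in $\FunctionSpaceGeneric{\ExpDecaySpeed}$ decays like $e^{l\ExpDecaySpeed s}$, and the paper uses $e^{l\ExpDecaySpeed s}\leq e^{\ExpDecaySpeed s}$ to keep everything inside $\FunctionSpaceGeneric{\ExpDecaySpeed}$. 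Second, the ``main obstacle'' you flag at the end -- the bookkeeping of how $\TailleTronc$, $\epsilon$, and the gap must shrink with $k$ -- is real for your route but is exactly the nuisance the paper's method eliminates: in Proposition~\ref{prop.variete-gamma-stable-global-version} the truncation size and the decay rate are chosen once and for all, uniformly in $k$, and only the constants $\ConstI[k]$ and the order of the polynomial in the norms of $\VFParam$ depend on $k$.
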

As explained above, our goal is to supplement this result by providing explicit estimates on $\epsilon$, $\eta$ and the derivatives of all orders of $\SMTGraphMap$. What we prove is summarized in the following addendum (for a precise version, see theorem~\ref{thm.variete-gamma-stable-local-version}):
\begin{addendum}
	For every $\TailleBoule >0$, one can find a radius $\epsilon$, a size $\eta$ and a map $\SMTGraphMap$ as above satisfying the following properties:
\begin{itemize}
	\item the radius $\epsilon$ is linear in $\TailleBoule$; polynomial on the distance between $\ExpDecaySpeed$ and the real part of the spectrum of $\OriginDiff$; inversely linear on the norm of the second derivative of $\VFParam$ on the closed ball $\overline{B}_{\PhaseSpaceVF \times \SetParameter}(0,\TailleBoule)$ and inversely polynomial on the norm of $\OriginDiff$ and the angle between the generalized eigenspaces of $\OriginDiff$.
	\item the size $\eta$ is linear in $\TailleBoule$; polynomial on the spectral gap $ \min \left(0 , \SpectreDown{\OriginDiff_{|\UnstableSpace}}\right)-\SpectreUp{\OriginDiff_{|\StableSpace}}$; inversely linear on the norm of the second derivative of $\VFParam$ on the closed ball $\overline{B}_{\PhaseSpaceVF \times \SetParameter}(0,\TailleBoule)$ and inversely polynomial on the norm of $\OriginDiff$ and the angle between the generalized eigenspaces of $\OriginDiff$.
	\item the norm of the $k$-th derivative of $\SMTGraphMap$ on $B_{\StableSpace}(0,\eta) \times B_{\SetParameter}(0,\eta)$ is a polynomial function of degree~$\simeq n k^2$ depending on the norm of $\OriginDiff$, the angle between the generalized eigenspaces of $\OriginDiff$, the inverse of $\TailleBoule$, the norms of the $(k+1)$ first derivatives of $\VFParam$ on the closed ball $\overline{B}_{\PhaseSpaceVF \times \SetParameter}(0,\TailleBoule)$ and the inverse of the spectral gap.
\end{itemize}	
\end{addendum}

\begin{rem}
	The parameter $r$ describes quantitatively how the local $\ExpDecaySpeed$-stable manifold is, indeed, a local object. It allows one to get some information on the size of the local $\ExpDecaySpeed$-stable manifold when one is only using a control of $\VFParam$ over the ball of radius $r$.
\end{rem}

\begin{rem}
	The strategy used to prove theorem~\ref{thm.variete-gamma-stable-local-version-intro} is standard. We find the orbits contained in a stable manifold as the fixed points of an "integral" operator (depending on the parameter $\Parameter$) on a suitable space of functions. The construction of the operator is natural and gives the desired description of the stable manifolds as graphs of some family of maps $\SMTGraphMap_\Parameter$. This is the technique used in \autocite{chua_methods_1998}, but with a major simplification. We directly prove that on the one hand the operator is smooth with respect to all variables including the parameters and on the other hand it is a contraction mapping with respect to the space of functions, thus we obtain that the family of graphs $\SMTGraphMap$ is smooth with respect to the variable in the phase space and the parameter, using a global version of the implicit function theorem (which can be seen as a contraction mapping theorem with parameters). This makes the proof easier and more natural compared to the one in \autocite{chua_methods_1998}. Indeed, in this reference, the authors do not prove that the operator is smooth and thus need to use a family of truncated operators to obtain the smoothness of the fixed point.
\end{rem}

\subsection{Vector fields vanishing on submanifolds}
Theorem~\ref{thm.variete-gamma-stable-local-version-intro} allows us to describe the stable foliation associated with a normally contracted submanifold on which a vector field vanishes. The context is as follows. Let $M$ be a smooth manifold of dimension $\DimPhaseSpace$ and let $N$ be a smooth submanifold of $M$. Let $\FixedVF$ be a smooth vector field on $M$ vanishing on $N$ such that for every point $x \in N$, there exists a direction transverse to $T_xN$ which is stabilized and contracted by $\DiffPoint{\FixedVF}{x}$. Recall that, given $x \in N$, the \motdef{stable set} $\SM{x,\FixedVF}$ of $x$ for $\FixedVF$ is the set of points in $M$ whose forward orbit under the flow of $\FixedVF$ converge to $x$. It is well known (this is an easy consequence of theorem~\ref{thm.variete-gamma-stable-local-version-intro}) that the family of stable manifolds $(\SM{x,\FixedVF})_{x \in N}$ foliates a neighbourhood $W$ of $N$ and the stable foliation
	\begin{equation*}
\SMF \egaldef \MyBigSet{\SM{x,\FixedVF} \cap W}{x \in N}
\end{equation*}
can be locally smoothly straightened.

Fixing a point $x \in N$ and a local chart (independantly of $\FixedVF$) centered around $x$ which straightens $N$, and looking at the situation in this chart, we "can assume that" $M$ is an open set $\OpenSetDefinVF$ of $\PhaseSpaceVF$ and $N$ is the set $\TraceOpenSetDefinVF:= \OpenSetDefinVF \cap \UnstableSpace \neq \emptyset$ where $\UnstableSpace$ is a linear subspace of $\PhaseSpaceVF$.
The standard result explained above can be stated as following:

\begin{thm}[Straightening of a stable foliation]
\label{thm.SMF-straightening-intro}
Let $\OpenSetDefinVF$ be an open neighbourhood of $0$ in $\PhaseSpaceVF$, $\UnstableSpace$ be a linear subspace of $\PhaseSpaceVF$ and $\FixedVF: \OpenSetDefinVF \to \PhaseSpaceVF$ be a smooth vector field such that
\begin{enumerate}
	\item $\FixedVF$ vanishes on $\TraceOpenSetDefinVF:= \OpenSetDefinVF \cap \UnstableSpace$;
	\item For every $\Parameter \in \TraceOpenSetDefinVF$, there exists a decomposition $\StableSpace_{\Parameter} \oplus \UnstableSpace = \R^n$ stabilized by $\OriginDiff_{\Parameter}:=\DiffPoint{\FixedVF}{\Parameter}$ and such that
	\begin{equation*}
	\SpectreUp{(\OriginDiff_{\Parameter})_{|\StableSpace_{\Parameter}}} <0
	\end{equation*}
\end{enumerate}
Let $\Parameter_0 \in \TraceOpenSetDefinVF$. Then there exists a smooth local coordinate system $\SLC$ defined on a ball $B_{\PhaseSpaceVF}\left(\Parameter_0,R\right)$ such that the family of stable manifolds $(\SM{\Parameter,\FixedVF})_{\Parameter \in \TraceOpenSetDefinVF \cap B_{\PhaseSpaceVF}\left(\Parameter_0,R\right)}$ foliates $B_{\PhaseSpaceVF}\left(\Parameter,R\right)$ and such that the local coordinate system straightens the stable foliation: for every $\Parameter \in \TraceOpenSetDefinVF \cap B_{\PhaseSpaceVF}\left(\Parameter_0,R\right)$,
\begin{equation*}
\SLC\left( \SM{\Parameter,\FixedVF} \cap B_{\PhaseSpaceVF}\left(\Parameter_0,R\right) \right) = \left(\Parameter + \StableSpace_{\Parameter_0}\right) \cap \SLC\left( B_{\PhaseSpaceVF}\left(\Parameter_0,R\right) \right)
\end{equation*}
\end{thm}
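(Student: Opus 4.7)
The plan is to apply Theorem~\ref{thm.variete-gamma-stable-local-version-intro} to a parametric family of shifted vector fields whose parameter ranges over $N$, and then invert the resulting graph parametrization with the inverse function theorem.

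After translating so that $\Parameter_0 = 0$, for each $\Parameter \in \UnstableSpace$ near $0$ I set $\VFParam_\Parameter(u) := \FixedVF(\Parameter + u)$. Since $\FixedVF$ vanishes on $N = \UnstableSpace$, one has $\VFParam_\Parameter(0) = 0$ and $\OriginDiff_\Parameter := \DiffPoint{\VFParam_\Parameter}{0} = \DiffPoint{\FixedVF}{\Parameter}$ with $\OriginDiff_\Parameter|_{\UnstableSpace} = 0$; in particular $\OriginDiff_0$ has spectrum $\{0\}$ on $\UnstableSpace$ and spectrum with strictly negative real parts on $\StableSpace_0 := \StableSpace_{\Parameter_0}$, so $\StableSpace_0 \oplus \UnstableSpace$ satisfies the partially hyperbolic hypothesis of Theorem~\ref{thm.variete-gamma-stable-local-version-intro}. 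Picking any $\ExpDecaySpeed \in \OpenInterval{\SpectreUp{\OriginDiff_0|_{\StableSpace_0}}}{0}$ and applying that theorem to the smooth family $(\VFParam_\Parameter)_{\Parameter \in \UnstableSpace}$ with parameter $\Parameter$ yields $\eta, \epsilon > 0$ and a smooth map $\SMTGraphMap : B_{\StableSpace_0}(0, \eta) \times B_{\UnstableSpace}(0, \epsilon) \to \UnstableSpace$ such that, for each small $\Parameter$, the graph of $\SMTGraphMap_\Parameter := \SMTGraphMap(\cdot, \Parameter)$ is the local $\ExpDecaySpeed$-stable set of $0$ for $\VFParam_\Parameter$, with $\SMTGraphMap(0, \Parameter) \equiv 0$ and $\partial_z \SMTGraphMap(0, 0) = 0$.

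I then define the parametrization $\GlobalPathMapGeneric(z, \Parameter) := \Parameter + z + \SMTGraphMap(z, \Parameter)$, which in the splitting $\R^n = \StableSpace_0 \oplus \UnstableSpace$ takes the triangular form $\GlobalPathMapGeneric(z, \Parameter) = (z, \Parameter + \SMTGraphMap(z, \Parameter))$. The identity $\SMTGraphMap(0, \Parameter) \equiv 0$ forces $\partial_\Parameter \SMTGraphMap(0, 0) = 0$, and together with $\partial_z \SMTGraphMap(0, 0) = 0$ this yields $\DiffPoint{\GlobalPathMapGeneric}{0, 0} = \Id_{\R^n}$. The inverse function theorem then provides a smooth inverse $\SLC := \GlobalPathMapGeneric^{-1}$ defined on some ball $B_{\R^n}(0, R)$. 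By construction, for each small $\Parameter$ the slice $\GlobalPathMapGeneric(\cdot, \Parameter)$ parametrizes $\Parameter + \text{graph}(\SMTGraphMap_\Parameter)$, which is contained in $\SM{\Parameter, \FixedVF}$ because decay at exponential rate $\ExpDecaySpeed$ entails convergence.

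To conclude, I check that $\SM{\Parameter, \FixedVF} \cap B_{\R^n}(0, R)$ coincides with this slice. For $p \in \SM{\Parameter, \FixedVF} \cap B_{\R^n}(0, R)$, the diffeomorphism property writes $p = \GlobalPathMapGeneric(z', \Parameter')$ for a unique pair $(z', \Parameter')$, so $p$ lies in $\SM{\Parameter', \FixedVF}$; uniqueness of the limit of a forward orbit then forces $\Parameter' = \Parameter$. The same argument shows that the family $(\SM{\Parameter, \FixedVF})_\Parameter$ foliates $B_{\R^n}(0, R)$, the leaf through $p$ being indexed by the $\UnstableSpace$-component of $\SLC(p)$, and hence $\SLC(\SM{\Parameter, \FixedVF} \cap B_{\R^n}(0, R)) = (\Parameter + \StableSpace_0) \cap \SLC(B_{\R^n}(0, R))$. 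The only substantial step is the invocation of Theorem~\ref{thm.variete-gamma-stable-local-version-intro} (which carries all the analytic weight, in particular producing the crucial identity $\SMTGraphMap(0, \Parameter) \equiv 0$ needed for the computation of $D\GlobalPathMapGeneric(0, 0)$); everything else is a direct computation combining the inverse function theorem with the uniqueness of forward limits.
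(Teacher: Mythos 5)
Your proof is correct and follows essentially the same route as the paper's proof of the precise version (Theorem~\ref{thm.redressement-local-feuilletage}): translate so $\Parameter_0$ is at the origin, treat the base-point as a parameter via $\VFParam_\Parameter(u)=\FixedVF(\Parameter+u)$, apply the parametrized stable-manifold theorem to get the graph map $\phi(z,\Parameter)$, and invert the triangular parametrization $(z,\Parameter)\mapsto(z,\Parameter+\phi(z,\Parameter))$, with the identity $\phi(0,\Parameter)\equiv 0$ and uniqueness of forward limits supplying the foliation and the straightening. The paper additionally tracks quantitative constants (via a plateau-map extension of $\VFParam$ and a global inverse-function-theorem estimate $\psi=\Id+h$, $\normesub{Dh}\leq\epsilon/2$) to prove the stronger Theorem~\ref{thm.redressement-local-feuilletage}, but your qualitative argument is exactly the same structure and suffices for the statement at hand.
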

Once again, our goal is to provide some explicit estimates on the radius $R$ and on the derivatives of all orders of $\SLC$ and $\SLC^{-1}$. What we prove is summarized in the following addendum (for a precise version, see theorem~\ref{thm.redressement-local-feuilletage}):
\begin{addendum}
For every $\TailleBoule >0$ such that $\overline{B}_{\PhaseSpaceVF}(\Parameter_0,\TailleBoule) \subset \OpenSetDefinVF$, one can find a radius $R$ and a local coordinate system $\SLC$ as above satisfying the following properties:
\begin{itemize}
	\item The radius $R$ is linear in $\TailleBoule$; polynomial on the spectral gap $\abs{\SpectreUp{(\OriginDiff_{\Parameter_0})_{|\StableSpace_{\Parameter_0}}}}$; inversely linear on the norm of the second derivative of $\FixedVF$ on the closed ball $\overline{B}_{\PhaseSpaceVF}(\Parameter_0,\TailleBoule)$; inversely polynomial on the norm of $\OriginDiff_{\Parameter_0}$ and the angle between the generalized eigenspaces of $\OriginDiff_{\Parameter_0}$.
	\item For every $\epsilon >0$, $\SLC$ restricted to $B_{\PhaseSpaceVF}\left(\Parameter_0,\epsilon R\right)$ is $\epsilon$-close to the identity with respect to the $C^1$-norm.
	\item The norms of the $k$-th derivatives of $\SLC$ and $\SLC^{-1}$ are polynomial on the norm of $\OriginDiff_{\Parameter_0}$, the angle between the generalized eigenspaces of $\OriginDiff_{\Parameter_0}$ and the norms of the $(k+1)$ first derivatives of $\FixedVF$  on the closed ball $\overline{B}_{\PhaseSpaceVF}(\Parameter_0,\TailleBoule)$ and inversely polynomial on the spectral gap and $\TailleBoule$.
\end{itemize}
\end{addendum}

\begin{rem}
	In order to deduce this from theorem~\ref{thm.variete-gamma-stable-local-version-intro}, one must choose a compact ball $\overline{B}(\Parameter_0,\TailleBoule) \subset \OpenSetDefinVF$ on which one controls the derivatives of all orders of $\FixedVF$. There is no canonical choice and one can use the parameter $\TailleBoule$ to make a choice depending on its needs.
\end{rem}

The paper is organized as follows.
Section~\ref{section.notations} compiles some notations used throughout the paper. In section~\ref{section.stable-manifold-theorem-notations-setup}, we prove theorem~\ref{thm.variete-gamma-stable-local-version-intro}. We first treat the global case (see proposition~\ref{prop.variete-gamma-stable-global-version}), which is the main technical result of this paper, and then we apply it to the local case. In section~\ref{e.SMF}, we prove theorem~\ref{thm.SMF-straightening-intro} using theorem~\ref{thm.variete-gamma-stable-local-version-intro}. Appendix~\ref{section.appendix-lemmas-algebra} recalls some well-known estimates of linear algebra that are extensively used throughout the paper.

\section{General notations.}
\label{section.notations}
We introduce here some notations that will be used throughout this paper.
For any $n \in \N$, we denote by $\norme{.}$ the Euclidean norm on $\R^n$. For any family $(E_1,\norme{.}_1),\dots,(E_r,\norme{.}_r)$, $(F,\norme{.}_F)$ of normed vector spaces (possibly of infinite dimension), for any continuous $r$-linear map $L:E_1 \times \dots \times E_r \to F$, we will usually denote by $\normesub{L}$ its subordinate norm, that is,
\begin{equation*}
\normesub{L} = \sup_{(x_1,\dots,x_r) \in \prod_{i=1}^{r} E_i}\frac{\norme{L(x_1,\dots,x_r)}_F}{\prod_{i=1}^{r}\norme{x_i}_i}
\end{equation*}

For any linear subspaces $F,G$ of $\R^n$, let us recall that the \motdef{angle} between $F$ and $G$, denoted by $\Angle{F,G}$, is defined as the minimal (unsigned) angle between a vector in $F$ and a vector in $G$.
The angle between $F$ and $G$ is strictly positive if and only if $F \cap G = \{0\}$. If this is the case, let
\begin{equation*}
\ConstanteInegTriangInverse{F,G} \egaldef \left(\frac{2}{1-\cos \Angle{F,G}}\right)^{\frac{1}{2}}
\end{equation*}
We generalise this notion by defining the angle between a finite family $E_1,\dots,E_r$ of linear subspaces of $\R^n$ as following
\begin{equation*}
\Angle{E_1,\dots,E_r} \egaldef \min_{1 \leq j \leq r} \Angle{E_j, \DirectSumEV_{i \neq j} E_i}
\end{equation*}

For any $A \in \MatrixSet_n(\R)$, let
\begin{subequations}
	\label{e.max-min-spectre}
\begin{align}
\label{e.max-spectre}
\SpectreUp{A} &\egaldef \max_{\lambda \in \Sp_{\C}(A)} \RealPart(\lambda) \\
\label{e.min-spectre}
\SpectreDown{A} &\egaldef \min_{\lambda \in \Sp_{\C}(A)} \RealPart(\lambda)
\end{align}
\end{subequations}

\begin{subequations}
\begin{align}
\begin{split}
\label{e.ConstanteInegTriangInverse}
\ConstanteInegTriangInverse{A} &\egaldef \left(\frac{2}{1-\cos \Angle{E_1,\dots,E_r}}\right)^{\frac{r-1}{2}} \\
& \text{where $E_1,\dots,E_r$ are the generalized eigenspaces of $A$}
\end{split} \\
\begin{split}
\label{e.ConstanteControleExpMatrixNorme}
\ConstanteControleExpMatrix{A} &\egaldef \max\left(1,\normesub{A}\right)^{n-1} \ConstanteInegTriangInverse{A}\\
& \text{where $\normesub{.}$ is the subordinate norm with respect to the Euclidean norm}
\end{split} \\
\label{e.ConstanteControleExpMatrixNormeII}
\ConstanteControleExpMatrixNormeII{A} &\egaldef 
2^{2n-2} (n-1)^{n-1} \ConstanteControleExpMatrix{A} 
\end{align}
\end{subequations}

Given a Riemannian manifold $M$ with distance $d$, a smooth vector field $\FixedVF$ on $M$ with flow $\FixedVF^t$ and a singularity $x$ of $\FixedVF$, we define the following \emph{stable} sets:
\begin{subequations}
\begin{itemize}
\item The \motdef{global stable set} $\SM{x,X}$ of $x$ for $\FixedVF$ is the set of points in $M$ whose forward orbit under the flow of $\FixedVF$ converge to $x$, that is,
\begin{equation}\label{e.global-stable-set}
\SM{x,X}= \MyBigSet{y \in M}{ \lim_{{t \to +\infty}} d\left(X^t(y),x\right) = 0}
\end{equation}
\item For any $\ExpDecaySpeed <0$, the \motdef{global $\ExpDecaySpeed$-stable set} $\SMexp{x,X}{\ExpDecaySpeed}$ of $x$ for $\FixedVF$ is the set of points in $M$ whose forward orbit under the flow of $\FixedVF$ converge to $x$ at least as fast as $e^{\ExpDecaySpeed t}$, that is,
\begin{equation}
\SMexp{x,X}{\ExpDecaySpeed} = \MyBigSet{y \in M}{d\left(X^t(y),x\right) = O_{t \to +\infty}\left(e^{\ExpDecaySpeed t}\right)}
\end{equation}
\item For any $\eta >0$, the \motdef{local stable set} $\LSM{x,X}{\eta}$ of $x$ for $\FixedVF$ is the set of points in $\SM{x,X}$ whose forward orbit under the flow of $\FixedVF$ stay in the $\eta$-neighbourhood of $x$, that is,
\begin{equation}
\LSM{x,X}{\eta} = \MyBigSet{y \in \SM{x,X}}{\forall t \geq 0, \: d\left(X^t(y),x\right) < \eta}
\end{equation}
\item For any $\ExpDecaySpeed <0$, for any $\eta >0$, the \motdef{local $\ExpDecaySpeed$-stable set} $\LSMexp{x,X}{\ExpDecaySpeed}{\eta}$ of $x$ for $\FixedVF$ is the set of points in $\SMexp{x,X}{\ExpDecaySpeed}$ whose forward orbit under the flow of $\FixedVF$ stay in the $\eta$-neighbourhood of $x$, that is,
\begin{equation}\label{e.local-gamma-stable-manifold}
\LSMexp{x,X}{\ExpDecaySpeed}{\eta} = \MyBigSet{y \in \SMexp{x,X}{\ExpDecaySpeed}}{\forall t \geq 0, \: d\left(X^t(y),x\right) < \eta}
\end{equation}
\end{itemize}
\end{subequations}
One can remark that if one chooses a distance $d'$ equivalent to $d$, then the stable sets for $d'$ coincide with the stable sets for $d$.

\section{Estimates for the stable manifold theorem with parameters}
\label{section.stable-manifold-theorem-notations-setup}
\subsection{Setup}
Fix an integer $\DimPhaseSpace \geq 2$ and an integer $\DimSetParameter \in \N^*$.
We define a \motdef{smooth family of vector fields $(\VFParam_\Parameter)_{\Parameter \in \SetParameter}$} as a smooth map
\begin{equation*}
\fonction{\VFParam}{\PhaseSpaceVF \times \SetParameter}{\PhaseSpaceVF}{(\VariablePhaseSpace,\Parameter)}{\VFParam_\Parameter(\VariablePhaseSpace)}
\end{equation*}
where $\PhaseSpaceVF$ is the phase space and $\SetParameter$ is the set of parameters.
Given such a $\VFParam$, let us consider some hypotheses:
\begin{hypo} \label{hypo.singularity}
	For every $\Parameter \in \SetParameter$, the origin is a singularity of $\VFParam_\Parameter$, \emph{i.e.}
	\begin{equation*}
	\VFParam_\Parameter(0)=0
	\end{equation*} 
\end{hypo}

\begin{hypo} \label{hypo.PHS}
The endomorphism $\OriginDiff:=\DiffPartielPoint{\VFParam}{0,0}{\VariablePhaseSpace}$ admits a \motdef{partially hyperbolic splitting} $(\StableSpace,\UnstableSpace)$, \emph{i.e.} there exists a non trivial decomposition $R^n = F \oplus G$ such that $F$ and $G$ are stabilized by $\OriginDiff$ and
\begin{equation*}
\SpectreUp{\OriginDiff_{|\StableSpace}} < \min \left(0 , \SpectreDown{\OriginDiff_{|\UnstableSpace}}\right)
\end{equation*}
\end{hypo}
Given such a partially hyperbolic splitting, we will consider the interval:
\begin{equation}
\label{e.exp-decay-speed-interval}
\ExpDecaySpeedInterval \egaldef \OpenInterval{\SpectreUp{\OriginDiff_{|\StableSpace}}}{\min\left(0,\SpectreDown{\OriginDiff_{|\UnstableSpace}}\right)} 
\end{equation}
and the "spectral gap":
\begin{equation}
	\label{e.gap-PHS}
	\gphs{\OriginDiff} \egaldef \min\left(1,\min\left(0,\SpectreDown{\OriginDiff_{|\UnstableSpace}}\right)-\SpectreUp{\OriginDiff_{|\StableSpace}}\right)^{-(\DimPhaseSpace-1)}
\end{equation}
\begin{hypo} \label{hypo.PHS-spectral-gap}
	Given a partially hyperbolic splitting $(\StableSpace,\UnstableSpace)$, the first derivative of $\VFParam$ satisfies
	\begin{equation*}
\sup_{(\VariablePhaseSpace,\Parameter) \in \PhaseSpaceVF \times \SetParameter} \normesub{\DiffPartiel{\VFParam}{\VariablePhaseSpace}(\VariablePhaseSpace,\Parameter)-\OriginDiff}  \leq \left(2^{3\DimPhaseSpace-1}(\DimPhaseSpace-1)^{\DimPhaseSpace-1}\sqrt{2} \: \ConstanteControleExpMatrix{\OriginDiff}\gphs{\OriginDiff}\right)^{-1}
	\end{equation*}
\end{hypo}

\begin{hypo} \label{hypo.Hyp-derivatives-bounded}
	The derivatives of all orders of $\VFParam$ are bounded, \emph{i.e.} for every $k \geq 1$,
	\begin{equation*}
\sup_{(\VariablePhaseSpace,\Parameter)\in \PhaseSpaceVF \times \SetParameter} \normesub{\DiffPartielPointSup{\VFParam}{\VariablePhaseSpace,\Parameter}{k}{\VariablePhaseSpace,\Parameter}} < + \infty
	\end{equation*}
\end{hypo}
In section~\ref{subsection.global-estimates}, we will assume that $\VFParam$ satisfies all the above hypotheses and we will prove a global stable manifold theorem with global estimates while in section~\ref{section.local-estimates}, we will only assume that the first two hypotheses hold true and we will prove a local stable manifold theorem, expliciting the local estimates and the size of the neighbourhood where these estimates hold true. The local theorem will be a consequence of the global one. The idea is to multiply the non linear part of $\VFParam$ by a smooth plateau map on a small neighbourhood of $(0,0)$ such that the new $\VFParam$ satisfies all the above hypotheses.

\subsection{Global estimates}
\label{subsection.global-estimates}
In this section, we state and prove a (global) stable manifold theorem with parameters for smooth families of vector fields $(\VFParam_\Parameter)_{\Parameter \in \SetParameter}$ satisfying the hypotheses~\ref{hypo.singularity},~\ref{hypo.PHS},~\ref{hypo.PHS-spectral-gap} and~\ref{hypo.Hyp-derivatives-bounded}. For such a $\VFParam$, let
\begin{equation*}
\NormeSupHigherDerivativesBounded{1}{\VFParam}\egaldef \sup_{(\VariablePhaseSpace,\Parameter) \in \PhaseSpaceVF \times \SetParameter} \normesub{\DiffPartiel{\VFParam}{\VariablePhaseSpace}(\VariablePhaseSpace,\Parameter)-\OriginDiff} 
\end{equation*}
where $\OriginDiff:=\DiffPartielPoint{\VFParam}{0,0}{\VariablePhaseSpace}$,
and for every integer $k \geq 2$, let
\begin{equation*}
\NormeSupHigherDerivativesBounded{k}{\VFParam} \egaldef  \sup_{2 \leq j \leq k} \sup_{(\VariablePhaseSpace,\Parameter)\in \PhaseSpaceVF \times \SetParameter} \normesub{\DiffPartielPointSup{\VFParam}{\VariablePhaseSpace,\Parameter}{j}{\VariablePhaseSpace,\Parameter}}
\end{equation*}
and
\begin{equation*}
	\NormeSupHigherDerivativesBoundedMax{k}{\VFParam} \egaldef \max\left(1,\NormeSupHigherDerivativesBounded{k}{\VFParam}\right)
\end{equation*}
Let us recall that in the current context, for any $\Parameter \in \SetParameter$ and $\ExpDecaySpeed <0$, the global $\ExpDecaySpeed$-stable set of $0$ for $\VFParam_\Parameter$ is
\begin{equation}\label{e.gamma-global-stable-set-context-VF-parameter}
\SMexp{0,\VFParam_\Parameter}{\ExpDecaySpeed}=\MyBigSet{\VariablePhaseSpace_0\in \PhaseSpaceVF}{\norme{\VFParam_\Parameter^t(\VariablePhaseSpace_0)}=O_{t \to +\infty}\left(e^{\ExpDecaySpeed t}\right)}
\end{equation}
For any $\ExpDecaySpeed <0$, let
\begin{equation*}
\DistanceBordSpectre{\OriginDiff}{\ExpDecaySpeed} \egaldef \min\bigl(1, d\left(\ExpDecaySpeed, \RealPart (\Sp_{\C}(\OriginDiff))\right)\bigr)^{\DimPhaseSpace-1}
\end{equation*}
where $d$ is the usual distance on $\R$.

\begin{prop}[Global estimates for the stable manifold theorem with parameters]\label{prop.variete-gamma-stable-global-version}
There exists a positive constant $\ConstantEstimateStableManifoldThm$ and a sequence of positive constants $(\ConstI[k])_{k \in \N}$ (depending only on the dimension $\DimPhaseSpace$ of the phase space) such that for every smooth family of vector fields $(\VFParam_\Parameter)_{\Parameter \in \SetParameter}$ satisfying the hypotheses~\ref{hypo.singularity},~\ref{hypo.PHS},~\ref{hypo.PHS-spectral-gap} and~\ref{hypo.Hyp-derivatives-bounded}, for every partially hyperbolic splitting $(\StableSpace,\UnstableSpace)$ of $\OriginDiff:= \DiffPartielPoint{\VFParam}{0,0}{\VariablePhaseSpace}$, there exists a unique smooth map
\begin{equation*}
	\fonction{\SMTGraphMap}{\StableSpace \times \SetParameter}{\UnstableSpace}{(\VarPhaseSpaceStable,\Parameter)}{\SMTGraphMap_\Parameter(\VarPhaseSpaceStable)}
\end{equation*}
such that
\begin{enumerate}
	\item\label{item.graph-structure-global-estimate} Graph structure of the global $\ExpDecaySpeed$-stable set: for every $\Parameter \in \SetParameter$, for every $\ExpDecaySpeed \in \ExpDecaySpeedInterval$ (see~\eqref{e.exp-decay-speed-interval}) satisfying
\begin{equation} \label{e-hyp-norme-differential-exp-decay}
\NormeSupHigherDerivativesBounded{1}{\VFParam}\leq \frac{1}{\ConstI} \frac{\DistanceBordSpectre{\OriginDiff}{\ExpDecaySpeed}}{\ConstanteControleExpMatrix{\OriginDiff}} 
\end{equation}
where $\ConstI = 2^{2\DimPhaseSpace}(\DimPhaseSpace-1)^{\DimPhaseSpace-1} \sqrt{2}$, the stable set $\SMexp{0,\VFParam_\Parameter}{\ExpDecaySpeed}$ is exactly the graph of the map $\SMTGraphMap_\Parameter : \StableSpace \to \UnstableSpace$. In particular, $\SMexp{0,\VFParam_\Parameter}{\ExpDecaySpeed}$ does not depend on the choice of such a $\ExpDecaySpeed$.

\item\label{item.local-stable-set-cas-global} Local $\ExpDecaySpeed$-stable set: for every $\ExpDecaySpeed \in \ExpDecaySpeedInterval$ satisfying~\eqref{e-hyp-norme-differential-exp-decay}, for every $\Parameter \in \SetParameter$, for every $ \eta >0$, for every $0 < \delta \leq \frac{\eta}{\ConstantEstimateStableManifoldThm \ConstanteControleExpMatrix{\OriginDiff} \gphs{\OriginDiff}}$, we have
\begin{equation}\label{e.local-stable-set-cas-global}
	\LSMexp{0,\VFParam_\Parameter}{\ExpDecaySpeed}{\eta} \cap B_{\PhaseSpaceVF}(0,\delta) = \Graph \left(\SMTGraphMapLocal_\Parameter\right) \cap B_{\PhaseSpaceVF}(0,\delta)
\end{equation}

\item \label{item.estimates-SMT-graph-global-estimates} Controls on $\SMTGraphMap$: for every $(\VarPhaseSpaceStable,\Parameter) \in \StableSpace \times \SetParameter$,
	\begin{subequations}
		\label{e.estimates-SMT-graph-global-estimates}
		\begin{align}
		\label{e-controle-phi-0}
		\norme{\SMTGraphMap(\VarPhaseSpaceStable,\Parameter)} &\leq \ConstI[0] \gphs{\OriginDiff}\ConstanteControleExpMatrix{\OriginDiff} \NormeSupHigherDerivativesBounded{1}{\VFParam} \norme{\VarPhaseSpaceStable} \\
		\label{e-controle-phi-1-derivative-stable-variable}
		\normesub{\DiffPartielPoint{\SMTGraphMap}{\VarPhaseSpaceStable,\Parameter}{z}} &\leq \ConstI[1]
		\gphs{\OriginDiff}
		\ConstanteControleExpMatrix{\OriginDiff} \NormeSupHigherDerivativesBounded{1}{\VFParam}\\
		\label{e-controle-phi-1-derivative-parameter-variable}
		\normesub{\DiffPartielPoint{\SMTGraphMap}{\VarPhaseSpaceStable,\Parameter}{\Parameter}} &\leq
		\ConstI[1]
		\gphs{\OriginDiff}
		\ConstanteControleExpMatrix{\OriginDiff} \NormeSupHigherDerivativesBounded{2}{\VFParam} \norme{\VarPhaseSpaceStable}
		\end{align}
		and more generally, using the norm $\norme{(\VarPhaseSpaceStable,\Parameter)}=\norme{\VarPhaseSpaceStable}+\norme{\Parameter}$ on $\StableSpace \times \SetParameter$, we have, for all $k \geq 2$,
		\begin{equation}\label{e-controle-phi-2}
		\normesub{\DiffPointSup{\SMTGraphMap}{\VarPhaseSpaceStable,\Parameter}{k}} \leq
		\ConstI[k] \left(\gphs{\OriginDiff}^2\ConstanteControleExpMatrix{\OriginDiff}^2\NormeSupHigherDerivativesBoundedMax{k+1}{\VFParam} \max\left(1,\norme{\VarPhaseSpaceStable}\right)  \right)^{2k-1}
		\end{equation}
	\end{subequations}
	where $\gphs{\OriginDiff}$ is defined by~\eqref{e.gap-PHS}.
\end{enumerate}
\end{prop}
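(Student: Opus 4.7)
\medskip

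The plan is to follow the classical \emph{Lyapunov--Perron} strategy: realize the $\gamma$-stable set as the zero-locus of a contracting integral operator, apply the parametric contraction mapping theorem to recover smoothness, and then propagate the estimates from the appendix through every step of the construction.

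\medskip

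\textbf{Step 1: setting up the operator.} Write $\VFParam_\Parameter(\VariablePhaseSpace) = \OriginDiff \VariablePhaseSpace + \NonLinearPartEDI(\VariablePhaseSpace,\Parameter)$ where $\OriginDiff = \DiffPartielPoint{\VFParam}{0,0}{\VariablePhaseSpace}$ and $\NonLinearPartEDI(0,0)=0$, $\DiffPartielPoint{\NonLinearPartEDI}{0,0}{\VariablePhaseSpace}=0$. Let $\pi_\StableSpace,\pi_\UnstableSpace$ be the spectral projectors onto $\StableSpace,\UnstableSpace$. For $\ExpDecaySpeed \in \ExpDecaySpeedInterval$, introduce the Banach space $\FunctionSpaceGeneric{\ExpDecaySpeed}$ of continuous $\VariablePhaseSpace:[0,+\infty)\to\PhaseSpaceVF$ with $\sup_{t\geq 0} e^{-\ExpDecaySpeed t}\norme{\VariablePhaseSpace(t)}<+\infty$. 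By the Duhamel formula, a $C^1$ curve $\VariablePhaseSpace$ solves $\dot\VariablePhaseSpace = \VFParam_\Parameter(\VariablePhaseSpace)$ and belongs to $\FunctionSpaceGeneric{\ExpDecaySpeed}$ if and only if, for some $z\in\StableSpace$,
\begin{equation*}
\VariablePhaseSpace(t)=e^{t\OriginDiff}z+\int_0^t e^{(t-s)\OriginDiff}\pi_\StableSpace \NonLinearPartEDI(\VariablePhaseSpace(s),\Parameter)\,\dd s-\int_t^{+\infty}e^{(t-s)\OriginDiff}\pi_\UnstableSpace \NonLinearPartEDI(\VariablePhaseSpace(s),\Parameter)\,\dd s.
\end{equation*}
Call the right-hand side $\OpFamily{}{z,\Parameter}(\VariablePhaseSpace)(t)$.

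\medskip

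\textbf{Step 2: contraction and fixed point.} The estimates from the appendix bound $\normesub{\pi_\StableSpace e^{t\OriginDiff}}$ by $\ConstanteControleExpMatrix{\OriginDiff}$ times a polynomial in $t$ absorbed by the spectral gap, which is where the factor $\gphs{\OriginDiff}$ enters. Combined with hypothesis~\ref{hypo.PHS-spectral-gap}, a direct computation shows that $\OpFamilyGeneric{z,\Parameter}$ maps $\FunctionSpaceGeneric{\ExpDecaySpeed}$ into itself and is a uniform contraction with ratio $\leq \tfrac12$, smoothly depending on the parameters $(z,\Parameter)\in\StableSpace\times\SetParameter$. The parametric contraction mapping principle (a global implicit function theorem) yields a unique fixed point $\VariablePhaseSpace^{*}(z,\Parameter)\in \FunctionSpaceGeneric{\ExpDecaySpeed}$ that is smooth in $(z,\Parameter)$. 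Setting
\begin{equation*}
\SMTGraphMap(z,\Parameter) \egaldef \pi_\UnstableSpace \VariablePhaseSpace^{*}(z,\Parameter)(0)
\end{equation*}
produces the candidate graph map, and by construction $\Graph(\SMTGraphMap_\Parameter)=\SMexp{0,\VFParam_\Parameter}{\ExpDecaySpeed}$.

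\medskip

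\textbf{Step 3: independence of $\ExpDecaySpeed$ and the local statement.} Independence of $\ExpDecaySpeed$ inside $\ExpDecaySpeedInterval$ follows because any orbit lying in $\FunctionSpaceGeneric{\ExpDecaySpeed}$ for \emph{some} admissible $\ExpDecaySpeed$ lies in $\FunctionSpaceGeneric{\ExpDecaySpeed'}$ for every other admissible $\ExpDecaySpeed'$, by uniqueness of the fixed point. For item~\ref{item.local-stable-set-cas-global}, the inclusion $\supset$ in~\eqref{e.local-stable-set-cas-global} is immediate from~\eqref{e-controle-phi-0} together with the choice of $\delta$. For $\subset$, if $y\in B_{\PhaseSpaceVF}(0,\delta)$ has forward orbit staying in $B_{\PhaseSpaceVF}(0,\eta)$, one uses the smallness of $\NonLinearPartEDI$ on $B(0,\eta)$ (controlled by~\ref{hypo.PHS-spectral-gap} and $\eta$) together with a Gronwall-type argument on $\pi_\UnstableSpace \VariablePhaseSpace(t)$ to show the unstable component vanishes, so that the orbit is already a solution of the fixed-point equation and belongs to $\Graph(\SMTGraphMap_\Parameter)$.

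\medskip

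\textbf{Step 4: order-by-order estimates.} The zeroth-order bound~\eqref{e-controle-phi-0} and the two first-order bounds~\eqref{e-controle-phi-1-derivative-stable-variable}--\eqref{e-controle-phi-1-derivative-parameter-variable} are obtained by differentiating the fixed-point equation once in $z$ or $\Parameter$; each derivative satisfies a linear fixed-point equation of the same type, and taking the $\FunctionSpaceGeneric{\ExpDecaySpeed}$-norm at $t=0$ yields the claim after invoking the exponential bound $\ConstanteControleExpMatrix{\OriginDiff}\gphs{\OriginDiff}$. For $k\geq 2$, differentiate the fixed-point equation $k$ times via Faà di Bruno: the $k$-th derivative $D^k \VariablePhaseSpace^{*}$ satisfies
\begin{equation*}
(\Id - L)D^k \VariablePhaseSpace^{*} = R_k
\end{equation*}
where $L$ is the linearization of $\OpFamilyGeneric{z,\Parameter}$ (which, being a contraction, makes $\Id-L$ invertible with uniformly bounded inverse on $\FunctionSpaceGeneric{\ExpDecaySpeed}$), and $R_k$ is a universal polynomial in the lower-order derivatives $D^j\VariablePhaseSpace^{*}$ ($j<k$) and the derivatives of $\NonLinearPartEDI$ up to order $k+1$. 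A straightforward induction on $k$ yields~\eqref{e-controle-phi-2} with the stated polynomial form; the exponent $2k-1$ is the combinatorial outcome of tracking the products of lower-order norms through Faà di Bruno, while the factor $\max(1,\norme{\VarPhaseSpaceStable})$ comes from each occurrence of $\VarPhaseSpaceStable$ entering the $R_k$ through $\NonLinearPartEDI$.

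\medskip

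The main obstacle is the inductive bookkeeping in Step~4: keeping the combinatorial explosion of Faà di Bruno under control so that the final exponent grows only linearly in $k$, and ensuring that the constants $\ConstI[k]$ depend only on the dimension $\DimPhaseSpace$, not on $\VFParam$. Everything else is a careful but fairly routine execution of the Lyapunov--Perron scheme with the spectral bounds from the appendix substituted in.
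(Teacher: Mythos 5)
Your overall strategy --- set up the Lyapunov--Perron operator on a weighted function space, obtain the fixed point and its parametric smoothness from a contraction mapping principle, and push the estimates through a Fa\`a di Bruno induction --- is the same as the paper's. The one stylistic difference is that you work directly with the spectral projectors $\pi_\StableSpace,\pi_\UnstableSpace$, whereas the paper first conjugates the vector field by an isomorphism $\PHSIsoStraight$ (isometric in restriction to $\StableSpace$ and to $\UnstableSpace$) so that the linear part becomes block diagonal in orthogonal coordinates. Both routes import the angle-dependent constant $\ConstanteInegTriangInverse{\StableSpace,\UnstableSpace}$ into the bounds --- yours through $\normesub{\pi_\StableSpace},\normesub{\pi_\UnstableSpace}$, the paper's through $\normesub{\PHSIsoStraight}$ --- so this is a cosmetic rather than a substantive choice.

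There is, however, a genuine gap in Step~3 concerning item~\eqref{item.local-stable-set-cas-global}. You have the two inclusions reversed in difficulty and argue the wrong one. The direction $\LSMexp{0,\VFParam_\Parameter}{\ExpDecaySpeed}{\eta}\cap B(0,\delta)\subset \Graph(\SMTGraphMap_\Parameter)\cap B(0,\delta)$ is \emph{trivial}: by definition $\LSMexp{0,\VFParam_\Parameter}{\ExpDecaySpeed}{\eta}\subset\SMexp{0,\VFParam_\Parameter}{\ExpDecaySpeed}$, and item~\eqref{item.graph-structure-global-estimate} gives $\SMexp{0,\VFParam_\Parameter}{\ExpDecaySpeed}=\Graph(\SMTGraphMap_\Parameter)$, so your Gronwall argument is unnecessary. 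The inclusion $\Graph(\SMTGraphMap_\Parameter)\cap B(0,\delta)\subset\LSMexp{0,\VFParam_\Parameter}{\ExpDecaySpeed}{\eta}\cap B(0,\delta)$ is the one that requires work, and it is \emph{not} ``immediate from~\eqref{e-controle-phi-0}'' as you claim: that estimate bounds the graph map $\SMTGraphMap(z,\Parameter)$ at $t=0$ only, whereas what you need is a bound on the forward excursion $\sup_{t\geq 0}\norme{\VariablePhaseSpace^*(z,\Parameter)(t)}$ in terms of $\norme{\VariablePhaseSpace^*(z,\Parameter)(0)}$. Orbits in the $\ExpDecaySpeed$-stable set need not approach the origin monotonically, and the constant $\ConstantEstimateStableManifoldThm\ConstanteControleExpMatrix{\OriginDiff}\gphs{\OriginDiff}$ appearing in the admissible range of $\delta$ measures precisely that overshoot. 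The paper closes this with a separate estimate, namely~\eqref{e.estimate-FP-lemma}, derived from the fixed-point identity, which bounds $\norme{(\FPStable{\VarConditionInitiale}{\Parameter}(t),\FPUnstable{\VarConditionInitiale}{\Parameter}(t))}$ for all $t\geq 0$ by a dimension-dependent multiple of its value at $t=0$; you would need to supply the analogous bound to complete Step~3.
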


\begin{rem}
	If one is working with a different norm than the Euclidean one, one will have the same result but with different constants $\ConstI,\ConstI[0], \ConstI[1],\dots$
\end{rem}

\begin{rem}
	Hypothesis~\ref{hypo.PHS-spectral-gap} is not fundamentally necessary for proposition~\ref{prop.variete-gamma-stable-global-version} to be true. This hypothesis implies that there exists a $\ExpDecaySpeed \in \ExpDecaySpeedInterval$ satisfying~\eqref{e-hyp-norme-differential-exp-decay} in item~\eqref{item.graph-structure-global-estimate}, so it is only a convenient and explicit sufficient condition for the proposition to not be empty. When proving the local version in section~\ref{section.local-estimates}, we will not check that hypothesis~\ref{hypo.PHS-spectral-gap} holds true, we will directly work with a given $\ExpDecaySpeed$ and check that~\eqref{e-hyp-norme-differential-exp-decay} holds true.
\end{rem}
The proof of proposition~\ref{prop.variete-gamma-stable-global-version} is heavily based on the contraction mapping theorem, applied in the Banach space introduced in definition~\ref{defin.function-space} below.

\begin{defin}[$\ExpDecaySpeed$-norm]
	For any $\ExpDecaySpeed \in \R$ and $(\VarPhaseSpaceStable,\VarPhaseSpaceUnstable):[0,+\infty[ \to \StableSpaceStraight \times \UnstableSpaceStraight$, we define the $\ExpDecaySpeed$-norm of $(\VarPhaseSpaceStable,\VarPhaseSpaceUnstable)$ by
	\begin{equation*}
	\norme{(\VarPhaseSpaceStable,\VarPhaseSpaceUnstable)}_\ExpDecaySpeed \Opegaldef \sup_{t \geq 0} \max(\norme{z(t)},\norme{v(t)}) e^{-\ExpDecaySpeed t} \in [0,+\infty]
	\end{equation*}
\end{defin}

\begin{defin}[Function space $\FunctionSpaceGeneric{\ExpDecaySpeed}$]
	\label{defin.function-space}
	Let $\ExpDecaySpeed \in \R$. Denote by $\FunctionSpaceGeneric{\ExpDecaySpeed}$ the vector space of continuous maps $(\VarPhaseSpaceStable,\VarPhaseSpaceUnstable): [0, +\infty[ \to \StableSpaceStraight \times \UnstableSpaceStraight$ whose $\ExpDecaySpeed$-norm are finite. The vector space $\FunctionSpaceGeneric{\ExpDecaySpeed}$ endowed with the $\ExpDecaySpeed$-norm is a Banach space.
\end{defin}

\begin{rem} \label{rem.espaces-H}
	For any $ \ExpDecaySpeed < \ExpDecaySpeed' $, we have $\FunctionSpaceGeneric{\ExpDecaySpeed} \subset \FunctionSpaceGeneric{\ExpDecaySpeed'}$ and for every $(\VarPhaseSpaceStable,\VarPhaseSpaceUnstable) \in \FunctionSpaceGeneric{\ExpDecaySpeed}$, we have $\norme{(\VarPhaseSpaceStable,\VarPhaseSpaceUnstable)}_{\ExpDecaySpeed'} \leq \norme{(\VarPhaseSpaceStable,\VarPhaseSpaceUnstable)}_\ExpDecaySpeed$.
\end{rem}

\begin{rem}
	It will be useful to see $\FunctionSpace{\ExpDecaySpeed}{\DimPhaseSpace}:= \FunctionSpaceGeneric{\ExpDecaySpeed}$ as the cartesian product $\FunctionSpace{\ExpDecaySpeed}{\DimStableSpaceStraight} \times \FunctionSpace{\ExpDecaySpeed}{\DimUnstableSpaceStraight}$ when $\DimPhaseSpace=\DimStableSpaceStraight+\DimUnstableSpaceStraight$.
\end{rem}

\begin{proof}[Proof of proposition~\ref{prop.variete-gamma-stable-global-version}]
Before expliciting the strategy of the proof, we need some preparatory work, stated below.
Fix a smooth family of vector fields $(\VFParam_\Parameter)_{\Parameter \in \SetParameter}$ satisfying the hypotheses~\ref{hypo.singularity},~\ref{hypo.PHS},~\ref{hypo.PHS-spectral-gap} and~\ref{hypo.Hyp-derivatives-bounded} and a partially hyperbolic splitting $(\StableSpace,\UnstableSpace)$ of $\OriginDiff:=\DiffPartielPoint{\VFParam}{0,0}{\VariablePhaseSpace}$.
Let $\DimStableSpaceStraight = \dim \StableSpace$ and $\DimUnstableSpaceStraight = \dim \UnstableSpace$.
Fix $\ExpDecaySpeed \in \ExpDecaySpeedInterval$ satisfying
\begin{equation} \label{e-hyp-norme-differential-exp-decay-Euclidean-norm}
\NormeSupHigherDerivativesBounded{1}{\VFParam}\leq \frac{1}{2^{2\DimPhaseSpace}(\DimPhaseSpace-1)^{\DimPhaseSpace-1} \sqrt{2}} \frac{\DistanceBordSpectre{\OriginDiff}{\ExpDecaySpeed}}{\ConstanteControleExpMatrix{\OriginDiff}}
\end{equation}
\proofstep{Conjugation of $\VFParam$}
We start by conjugating $\VFParam$ in such a way that $\StableSpace$ and $\UnstableSpace$ become orthogonal linear subspaces of $\PhaseSpaceVF$.
For that purpose, let us fix an isomorphism $\PHSIsoStraight:\PhaseSpaceVF \to \PhaseSpaceVF \simeq \StableSpaceStraight \times \UnstableSpaceStraight$ such that $\PHSIsoStraight_{|\StableSpace}$ (resp. $\PHSIsoStraight_{|\UnstableSpace}$) is an isometry from $\StableSpace$ (resp. $\UnstableSpace$) to $\StableSpaceStraight \times \{0\} \simeq \StableSpaceStraight$ (resp. $\{0\} \times \UnstableSpaceStraight \simeq \UnstableSpaceStraight$).
We have (using lemma~\ref{lemma.triangular-inequality-inverse-angle-two-spaces})
\begin{equation}\label{e.norme-conjuguaison}
\norme{\PHSIsoStraight} \leq \ConstanteInegTriangInverse{\StableSpace,\UnstableSpace} ,\quad \norme{\PHSIsoStraight^{-1}} \leq \sqrt{2}
\end{equation}
We now define
\begin{equation}\label{e.defin-VFParam-Straight}
\fonction{\VFParamStraight}{\StableSpaceStraight \times \UnstableSpaceStraight \times \SetParameter}{\StableSpaceStraight \times \UnstableSpaceStraight}{(\VarPhaseSpaceStable,\VarPhaseSpaceUnstable,\Parameter)}{\PHSIsoStraight\bigl(\VFParam_\Parameter(\PHSIsoStraight^{-1}(\VarPhaseSpaceStable,\VarPhaseSpaceUnstable))\bigr)}
\end{equation}
One can remark that for every $k \geq 1$, 
\begin{equation} \label{e.estimate-sup-derivatives-straight}
\NormeSupHigherDerivativesBounded{k}{\VFParamStraight}\leq \sqrt{2}^k \: \ConstanteInegTriangInverse{\StableSpace,\UnstableSpace} \NormeSupHigherDerivativesBounded{k}{\VFParam} 
\end{equation}
Let
\begin{equation*}
\OriginDiffStraight \egaldef \DiffPartielPoint{\VFParamStraight}{0,0,0}{\VarPhaseSpaceStable,\VarPhaseSpaceUnstable} = \PHSIsoStraight \OriginDiff \PHSIsoStraight^{-1}=
\begin{pmatrix}
\StableMatrixStraight & 0   \\
0 & \UnstableMatrixStraight \\
\end{pmatrix}
\end{equation*}
where $\StableMatrixStraight=\tilde{\LineartPart}_{|\StableSpaceStraight}$ and $\UnstableMatrixStraight=\tilde{\LineartPart}_{|\UnstableSpaceStraight}$, with respect to the canonical basis. Using the fact that $\PHSIsoStraight$ is an isometry in restriction to $\StableSpace$ and $\UnstableSpace$, we have the following properties on $\StableMatrixStraight$ and $\UnstableMatrixStraight$ :
\begin{subequations}
	\label{e.properties-linear-part-EDO}
	\begin{align}
	\label{e-spectre-conjuguated}
	\Sp_{\C}(\StableMatrixStraight)&= \Sp_{\C}(\StableMatrix),  &\Sp_{\C}(\UnstableMatrixStraight)&= \Sp_{\C}(\UnstableMatrix) \\
	\label{e.constantes-conjuguated}
	\ConstanteInegTriangInverse{\StableMatrixStraight} &= \ConstanteInegTriangInverse{\StableMatrix}, &\ConstanteInegTriangInverse{\UnstableMatrixStraight}&= \ConstanteInegTriangInverse{\UnstableMatrix} \\
	\label{e-constantes-conjuguated-2}
	\ConstanteControleExpMatrix{\StableMatrixStraight} &= \ConstanteControleExpMatrix{\StableMatrix},  &\ConstanteControleExpMatrix{\UnstableMatrixStraight} &= \ConstanteControleExpMatrix{\UnstableMatrix}
	\end{align}
	Property~\eqref{e-spectre-conjuguated} implies that $\DistanceBordSpectre{\OriginDiffStraight}{\ExpDecaySpeed}=\DistanceBordSpectre{\OriginDiff}{\ExpDecaySpeed}$.
\end{subequations}
\proofstep{Differential equation view-point}
Let $\Parameter \in \SetParameter$.
The differential equation associated with the vector field $\VFParamStraight_\Parameter$ can be written in the following form
\begin{equation} \label{e-EDO_initiale}
\begin{cases}
\VarPhaseSpaceStable' &= \StableMatrixStraight \VarPhaseSpaceStable + \NonLinearPartEDI(\VarPhaseSpaceStable,\VarPhaseSpaceUnstable,\Parameter) \\
\VarPhaseSpaceUnstable' &= \UnstableMatrixStraight \VarPhaseSpaceUnstable + \NonLinearPartEDII(\VarPhaseSpaceStable,\VarPhaseSpaceUnstable,\Parameter)
\end{cases}
\end{equation}
where $(\NonLinearPartEDI,\NonLinearPartEDII):\StableSpaceStraight \times \UnstableSpaceStraight \times \SetParameter \to \StableSpaceStraight \times \UnstableSpaceStraight$ is a smooth map defined by
\begin{equation*}
\begin{pmatrix}
\NonLinearPartEDI(\VarPhaseSpaceStable,\VarPhaseSpaceUnstable,\Parameter)   \\
\NonLinearPartEDII(\VarPhaseSpaceStable,\VarPhaseSpaceUnstable,\Parameter) \\
\end{pmatrix}
= \VFParamStraight(\VarPhaseSpaceStable,\VarPhaseSpaceUnstable,\Parameter) - \DiffPartielPoint{\VFParamStraight}{0,0}{\VarPhaseSpaceStable,\VarPhaseSpaceUnstable}.(\VarPhaseSpaceStable,\VarPhaseSpaceUnstable)
\end{equation*}
and satisfying
\begin{subequations}
\label{e.properties-non-linear-part-EDO}
	\begin{align}
	\label{e-ligne-critique}
	\forall \Parameter \in \SetParameter, \;  (\NonLinearPartEDI,\NonLinearPartEDII)(0,0,\Parameter) & = 0 \\
	\label{e-derivee-f-g-0}
	\DiffPoint{(\NonLinearPartEDI,\NonLinearPartEDII)}{0,0,0} &=0 \\
	\label{e.controle-derivee-premiere-f-g-sup}
\forall (\VarPhaseSpaceStable,\VarPhaseSpaceUnstable,\Parameter) \in \StableSpaceStraight \times \UnstableSpaceStraight \times \SetParameter, \normesub{\DiffPartielPoint{(\NonLinearPartEDI,\NonLinearPartEDII)}{\VarPhaseSpaceStable,\VarPhaseSpaceUnstable,\Parameter}{\VarPhaseSpaceStable,\VarPhaseSpaceUnstable}} &\leq \NormeSupHigherDerivativesBounded{1}{\VFParamStraight} \\
	\label{e-controle-derivee-f-g-sup}
	\forall N \geq 2, \forall \: 2 \leq k \leq N, \; \forall (\VarPhaseSpaceStable,\VarPhaseSpaceUnstable,\Parameter) \in \StableSpaceStraight \times \UnstableSpaceStraight \times \SetParameter,  \normesub{\DiffPointSup{(\NonLinearPartEDI,\NonLinearPartEDII)}{\VarPhaseSpaceStable,\VarPhaseSpaceUnstable,\Parameter}{k}} & \leq \NormeSupHigherDerivativesBounded{N}{\VFParamStraight}
	\end{align}
Property~\eqref{e-ligne-critique} implies that
	\begin{equation} \label{e-controle-derivee-f-g-mu}
	\forall \Parameter \in \SetParameter, \forall k \in \N, \; D^k_{\Parameter}(\NonLinearPartEDI,\NonLinearPartEDII)(0,0,\Parameter) =0 
	\end{equation}
\end{subequations}
\proofstep{Control of exponential matrices}
We now state an estimate that will be used several times throughout this proof.
Let
\begin{equation*}
\BorneVPStable = \frac{\ExpDecaySpeed+\SpectreUp{\StableMatrix}}{2}, \quad \BorneVPUnstable = \frac{\ExpDecaySpeed+\SpectreDown{\UnstableMatrix}}{2}
\end{equation*}
According to lemma~\ref{lemma.controle-polynomial-exp-matrices} and~\eqref{e-constantes-conjuguated-2}, we have, for every $s \geq 0$,
\begin{equation}\label{e.controle-polynomial-matrice-exp}
\begin{split}
\normesub{e^{s\StableMatrixStraight}} &\leq  \frac{\ConstanteControleExpMatrixNormeII{\StableMatrix}}{\DistanceBordSpectre{\OriginDiff}{\ExpDecaySpeed}^{\frac{\DimStableSpaceStraight-1}{\DimPhaseSpace-1}}} e^{\BorneVPStable s}	 \\
\normesub{e^{-s\UnstableMatrixStraight}} &\leq 	\frac{\ConstanteControleExpMatrixNormeII{\UnstableMatrix}}{\DistanceBordSpectre{\OriginDiff}{\ExpDecaySpeed}^{\frac{\DimUnstableSpaceStraight-1}{\DimPhaseSpace-1}}} e^{-\BorneVPUnstable s}
\end{split}
\end{equation}
where $\ConstanteControleExpMatrixNormeII{.}$ is defined by~\eqref{e.ConstanteControleExpMatrixNormeII}. Beware of the fact that the integer $\DimPhaseSpace$ must be replaced by $\DimStableSpaceStraight$ (resp. $\DimUnstableSpaceStraight$) for $\ConstanteControleExpMatrixNormeII{\StableMatrix}$ (resp. $\ConstanteControleExpMatrixNormeII{\UnstableMatrix}$).
\proofstep{Main operator of the proof}
Let us define the operator
\begin{equation*}
\fonction{\OpFamilyGeneric{\ExpDecaySpeed}}{\FunctionSpaceGeneric{\ExpDecaySpeed} \times \StableSpaceStraight \times \SetParameter}{\FunctionSpaceGeneric{\ExpDecaySpeed}}{((\VarPhaseSpaceStable,\VarPhaseSpaceUnstable),\VarConditionInitiale,\Parameter)}{\OpFamily{\ExpDecaySpeed}{\VarConditionInitiale,\Parameter}(\VarPhaseSpaceStable,\VarPhaseSpaceUnstable)}
\end{equation*}
by the formula
\begin{equation*}
\OpFamily{\ExpDecaySpeed}{\VarConditionInitiale,\Parameter}(\VarPhaseSpaceStable,\VarPhaseSpaceUnstable)(t)=
\begin{pmatrix}
e^{t \StableMatrixStraight}\VarConditionInitiale +\Integral{0}{t}{e^{(t-s)\StableMatrixStraight}\NonLinearPartEDI(\VarPhaseSpaceStable(s),\VarPhaseSpaceUnstable(s),\Parameter)}{s} \\
- \Integral{t}{+\infty}{e^{-(s-t)\UnstableMatrixStraight} \NonLinearPartEDII(\VarPhaseSpaceStable(s),\VarPhaseSpaceUnstable(s),\Parameter)}{s}
\end{pmatrix}
\end{equation*}
\proofstep{Strategy of the proof}
Fix $\Parameter \in \SetParameter$.
We want to prove that the global $\ExpDecaySpeed$-stable set $\SMexp{0,\VFParamStraight_\Parameter}{\ExpDecaySpeed}$ is a graph over $\StableSpaceStraight$. This amounts to prove that for every $\VarConditionInitiale \in \StableSpaceStraight$, there exists a unique $\VarPhaseSpaceUnstable_0 \in \UnstableSpaceStraight$ such that $(\VarConditionInitiale,\VarPhaseSpaceUnstable_0) \in \SMexp{0,\VFParamStraight_\Parameter}{\ExpDecaySpeed}$. This is also equivalent to say that for every $\VarConditionInitiale \in \StableSpaceStraight$, there exists a unique solution $(\VarPhaseSpaceStable,\VarPhaseSpaceUnstable)$ of~\eqref{e-EDO_initiale} such that $\VarPhaseSpaceStable(0)= \VarConditionInitiale$ and $(\VarPhaseSpaceStable,\VarPhaseSpaceUnstable) \in \FunctionSpaceGeneric{\ExpDecaySpeed}$.
We introduced the operator $\OpFamily{\ExpDecaySpeed}{\VarConditionInitiale,\Parameter}$ because its fixed points are exactly the solutions $(\VarPhaseSpaceStable,\VarPhaseSpaceUnstable)$ of~\eqref{e-EDO_initiale} such that $\VarPhaseSpaceStable(0)= \VarConditionInitiale$ and $(\VarPhaseSpaceStable,\VarPhaseSpaceUnstable) \in \FunctionSpaceGeneric{\ExpDecaySpeed}$ (see lemma~\ref{lemma.equivalence-stable-set-graph}).
It is then enough to prove that $\OpFamily{\ExpDecaySpeed}{\VarConditionInitiale,\Parameter}$ admits a unique fixed point in $\FunctionSpaceGeneric{\ExpDecaySpeed}$, denoted by $(\FPStable{\VarConditionInitiale}{\Parameter},\FPUnstable{\VarConditionInitiale}{\Parameter})$ (see lemma~\ref{lemma.FP-smooth-estimates}).
See figure~\ref{fig.orbits-in-stable-manifold}.
The estimates on the graph follow from estimates on $\FPUnstable{\VarConditionInitiale}{\Parameter}$ (see lemma~\ref{lemma.FP-smooth-estimates}) which themselves follow from estimates on $\OpFamilyGeneric{\ExpDecaySpeed}$ (see lemma~\ref{lemma.smoothness-FP}).
\begin{figure}
	\begin{center}
		\begin{tikzpicture}[scale=2]
		\draw[->] (-1/2,0) -- (2,0);
		\draw (2,0) node[right] {$\StableSpaceStraight$};
		\draw [->] (0,-1/2) -- (0,2);
		\draw (0,2) node[above] {$\UnstableSpaceStraight$};
		\draw (0,0) node[below left] {$0$};
		\draw[red,directed] (2,1.5) to[bend left=37] (0,0);
		\draw (1.5,0.6) node {$\bullet$};
		\draw (1.5,0.6) node[right] {$(\FPStable{\VarConditionInitiale}{\Parameter}(0),\FPUnstable{\VarConditionInitiale}{\Parameter}(0))$};
		\draw[dashed] (1.5,0) -- (1.5,2);
		\draw (1.5,0) node[below] {$\VarConditionInitiale$};
		\end{tikzpicture}
	\end{center}
	\caption{$(\FPStable{\VarConditionInitiale}{\Parameter},\FPUnstable{\VarConditionInitiale}{\Parameter})$ is the unique orbit of $\VFParamStraight_\Parameter$ contained in the global $\ExpDecaySpeed$-stable set $\SMexp{0,\VFParamStraight_\Parameter}{\ExpDecaySpeed}$ with initial condition of the form $(\VarConditionInitiale,\VarPhaseSpaceUnstable_0)$, $\VarPhaseSpaceUnstable_0 \in \UnstableSpaceStraight$.}
	\label{fig.orbits-in-stable-manifold}
\end{figure}
\proofstep{Technical details of the proof} We now state and prove three lemmas which constitute the main part of the proof.

For every $k \geq 0$, we denote by $\mathcal{L}_k \left(\FunctionSpaceGeneric{\ExpDecaySpeed}\times \SetParameter,\FunctionSpaceGeneric{\ExpDecaySpeed}\right)$ the space of $k$-linear maps from $\left(\FunctionSpaceGeneric{\ExpDecaySpeed}\times \SetParameter\right)^k$ to $\FunctionSpaceGeneric{\ExpDecaySpeed}$ and we define the operator
\begin{equation*}
	\OpFamilyII{k}: \FunctionSpaceGeneric{\ExpDecaySpeed}\times \SetParameter \to \mathcal{L}_k \left(\FunctionSpaceGeneric{\ExpDecaySpeed}\times \SetParameter,\FunctionSpaceGeneric{\ExpDecaySpeed}\right)
\end{equation*}
by the following formula: for every $((\VarPhaseSpaceStable,\VarPhaseSpaceUnstable),\Parameter) \in \FunctionSpaceGeneric{\ExpDecaySpeed}\times \SetParameter$, $((z_i,v_i),\Parameter_i)_{1 \leq i \leq k} \in \left(\FunctionSpaceGeneric{\ExpDecaySpeed}\times \SetParameter\right)^k$, $t \geq 0$,
\begin{equation*}
	\OpFamilyII{k}((\VarPhaseSpaceStable,\VarPhaseSpaceUnstable),\Parameter).((z_i,v_i),\Parameter_i)(t) = 
	\begin{pmatrix}
	\Integral{0}{t}{e^{(t-s)\StableMatrixStraight}\DiffPointSup{\NonLinearPartEDI}{\VarPhaseSpaceStable(s),\VarPhaseSpaceUnstable(s),\Parameter}{k}.((z_i(s),v_i(s)),\Parameter_i)}{s} \\
	- \Integral{t}{+\infty}{e^{-(s-t)\UnstableMatrixStraight} \DiffPointSup{\NonLinearPartEDII}{\VarPhaseSpaceStable(s),\VarPhaseSpaceUnstable(s),\Parameter}{k}.((z_i(s),v_i(s)),\Parameter_i)}{s}
	\end{pmatrix}
\end{equation*}
We also define the operator
\begin{equation*}
	\fonction{\Gamma}{\StableSpaceStraight}{\FunctionSpaceGeneric{\ExpDecaySpeed}}{\VarConditionInitiale}{\left[t \mapsto
		\begin{pmatrix}
		e^{t \StableMatrixStraight}\VarConditionInitiale \\
		0
		\end{pmatrix}	
		\right]}
\end{equation*}

\begin{lemme}\label{lemma.smoothness-FP}
The operator $\OpFamilyGeneric{\ExpDecaySpeed}$ is smooth.

For all $k \geq 1$, $((\VarPhaseSpaceStable,\VarPhaseSpaceUnstable),\VarConditionInitiale,\Parameter) \in \FunctionSpaceGeneric{\ExpDecaySpeed}\times \StableSpaceStraight \times \SetParameter$, $((z_i,v_i),\VarConditionInitiale_i,\Parameter_i)_{1 \leq i \leq k} \in \left(\FunctionSpaceGeneric{\ExpDecaySpeed}\times \StableSpaceStraight \times \SetParameter\right)^k$,
\begin{equation}\label{e.derivatives-operator-FP}
	\DiffPointSup{\OpFamilyGeneric{\ExpDecaySpeed}}{(\VarPhaseSpaceStable,\VarPhaseSpaceUnstable),\VarConditionInitiale,\Parameter}{k}. ((z_i,v_i),\VarConditionInitiale_i,\Parameter_i) = 
\begin{cases}
\Gamma(\VarConditionInitiale_1) + \OpFamilyII{1}((\VarPhaseSpaceStable,\VarPhaseSpaceUnstable),\Parameter).((z_1,v_1),\Parameter_1) & \text{if $k=1$}\\
\OpFamilyII{k}((\VarPhaseSpaceStable,\VarPhaseSpaceUnstable),\Parameter).((z_i,v_i),\Parameter_i) & \text{if $k \geq 2$}
\end{cases}
\end{equation}
Moreover, using the following norm on $\FunctionSpaceGeneric{\ExpDecaySpeed}\times \StableSpaceStraight \times \SetParameter$:
\begin{equation*}
	\norme{((\VarPhaseSpaceStable,\VarPhaseSpaceUnstable),\VarConditionInitiale,\Parameter)} = \norme{(\VarPhaseSpaceStable,\VarPhaseSpaceUnstable)}_\ExpDecaySpeed + \norme{\VarConditionInitiale}+\norme{\Parameter}
\end{equation*}
we have the following estimates : for every $((\VarPhaseSpaceStable,\VarPhaseSpaceUnstable),\VarConditionInitiale,\Parameter) \in \FunctionSpaceGeneric{\ExpDecaySpeed}\times \StableSpaceStraight \times \SetParameter$,
\begin{subequations}
	\label{e.estimate-operator-FP-all-cases}
\begin{align}
	\label{e.partial-derivarive-operator-bounded-1demi}
	\normesub{\DiffPartielPoint{\OpFamilyGeneric{\ExpDecaySpeed}}{(\VarPhaseSpaceStable,\VarPhaseSpaceUnstable),\VarConditionInitiale,\Parameter}{\VarPhaseSpaceStable,\VarPhaseSpaceUnstable}}_\ExpDecaySpeed &\leq \frac{1}{2} \\
	\label{e.partial-derivarive-operator-direction-cond-ini}
	\normesub{\DiffPartielPoint{\OpFamilyGeneric{\ExpDecaySpeed}}{(\VarPhaseSpaceStable,\VarPhaseSpaceUnstable),\VarConditionInitiale,\Parameter}{\VarConditionInitiale}}_\ExpDecaySpeed &\leq \frac{\ConstanteControleExpMatrixNormeII{\StableMatrix}}{\DistanceBordSpectre{\OriginDiff}{\ExpDecaySpeed}^{\frac{\DimStableSpaceStraight-1}{\DimPhaseSpace-1}}} \\
	\label{e.partial-derivarive-operator-direction-parameter}
	\normesub{\DiffPartielPoint{\OpFamilyGeneric{\ExpDecaySpeed}}{(\VarPhaseSpaceStable,\VarPhaseSpaceUnstable),\VarConditionInitiale,\Parameter}{\Parameter}}_\ExpDecaySpeed &\leq \frac{2\max\left(\ConstanteControleExpMatrixNormeII{\StableMatrix},\ConstanteControleExpMatrixNormeII{\UnstableMatrix}\right)}{\DistanceBordSpectre{\OriginDiff}{\ExpDecaySpeed}}\NormeSupHigherDerivativesBounded{2}{\VFParamStraight} \norme{(\VarPhaseSpaceStable,\VarPhaseSpaceUnstable)}_\ExpDecaySpeed
\end{align}
where $\normesub{.}_\ExpDecaySpeed$ denotes the standard norm of continuous linear maps from $\FunctionSpaceGeneric{\ExpDecaySpeed}$ (resp. $\StableSpaceStraight$, resp. $\SetParameter$) to $\FunctionSpaceGeneric{\ExpDecaySpeed}$ and, more generally, for every $k \geq 2$,
\begin{equation}\label{e.estimates-derivatives-operatorFP-high-order}
\normesub{\DiffPointSup{\OpFamilyGeneric{\ExpDecaySpeed}}{(\VarPhaseSpaceStable,\VarPhaseSpaceUnstable),\VarConditionInitiale,\Parameter}{k}}_\ExpDecaySpeed \leq 
	\frac{2\max\left(\ConstanteControleExpMatrixNormeII{\StableMatrix},\ConstanteControleExpMatrixNormeII{\UnstableMatrix}\right)}{\DistanceBordSpectre{\OriginDiff}{\ExpDecaySpeed}}   \left(\NormeSupHigherDerivativesBounded{k}{\VFParamStraight} + \NormeSupHigherDerivativesBounded{k+1}{\VFParamStraight} \norme{(\VarPhaseSpaceStable,\VarPhaseSpaceUnstable)}_\ExpDecaySpeed \right)
\end{equation}
\end{subequations}
where $\normesub{.}_\ExpDecaySpeed$ denotes the standard norm of continuous $k$-linear maps from $\left(\FunctionSpaceGeneric{\ExpDecaySpeed}\times \StableSpaceStraight \times \SetParameter\right)^k$ to $\FunctionSpaceGeneric{\ExpDecaySpeed}$.
\end{lemme}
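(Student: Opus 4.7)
The plan is to establish smoothness of $\OpFamilyGeneric{\ExpDecaySpeed}$ and derive the formula \eqref{e.derivatives-operator-FP} by differentiating under the integral sign. The operator splits into the piece $e^{t\StableMatrixStraight}\VarConditionInitiale$, which is affine in $\VarConditionInitiale$ and independent of $(\VarPhaseSpaceStable,\VarPhaseSpaceUnstable)$ and $\Parameter$, and two integral terms whose dependence on $((\VarPhaseSpaceStable,\VarPhaseSpaceUnstable),\Parameter)$ is smooth by smoothness of $(\NonLinearPartEDI,\NonLinearPartEDII)$. The exchange of derivative and integral is justified by dominated convergence, the dominating functions being read off from the matrix-exponential estimate \eqref{e.controle-polynomial-matrice-exp} applied at a slightly larger exponent $\ExpDecaySpeed' > \ExpDecaySpeed$ (see remark \ref{rem.espaces-H}) to ensure integrability near $+\infty$. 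The derivative in the $\VarConditionInitiale$-direction yields exactly $\Gamma$, while all remaining derivatives come from differentiating the integrand and produce $\OpFamilyII{k}$; for $k \geq 2$ the $\Gamma$ contribution vanishes since it is linear in $\VarConditionInitiale$.

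With the formulas in hand, the four estimates in \eqref{e.estimate-operator-FP-all-cases} reduce to bounding weighted time integrals of matrix exponentials against the derivative bounds on $(\NonLinearPartEDI,\NonLinearPartEDII)$. Using the choice $\BorneVPStable=(\ExpDecaySpeed+\SpectreUp{\StableMatrix})/2$ introduced in the "Control of exponential matrices" step and the estimate \eqref{e.controle-polynomial-matrice-exp}, a direct computation bounds $\Integral{0}{t}{\normesub{e^{(t-s)\StableMatrixStraight}}e^{\ExpDecaySpeed s}}{s}$ by a constant multiple of $e^{\ExpDecaySpeed t}/\left(\DistanceBordSpectre{\OriginDiff}{\ExpDecaySpeed}^{(\DimStableSpaceStraight-1)/(\DimPhaseSpace-1)}(\ExpDecaySpeed-\BorneVPStable)\right)$, with a symmetric tail integral for $e^{-(s-t)\UnstableMatrixStraight}$; the factor $1/(\ExpDecaySpeed-\BorneVPStable)$ is absorbed into one further power of $1/\DistanceBordSpectre{\OriginDiff}{\ExpDecaySpeed}^{1/(\DimPhaseSpace-1)}$, so the uniform denominator $\DistanceBordSpectre{\OriginDiff}{\ExpDecaySpeed}$ appears. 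Combining this with \eqref{e.controle-derivee-premiere-f-g-sup} and the calibration \eqref{e-hyp-norme-differential-exp-decay-Euclidean-norm} together with \eqref{e.estimate-sup-derivatives-straight} — designed precisely to make the constant land on $1/2$ — gives \eqref{e.partial-derivarive-operator-bounded-1demi}. Estimate \eqref{e.partial-derivarive-operator-direction-cond-ini} is immediate from the bound on $\normesub{e^{t\StableMatrixStraight}}$ applied to $\Gamma$, and \eqref{e.partial-derivarive-operator-direction-parameter} uses the crucial vanishing $\DiffPartielPoint{(\NonLinearPartEDI,\NonLinearPartEDII)}{0,0,\Parameter}{\Parameter}=0$ from \eqref{e-controle-derivee-f-g-mu}: a mean value inequality in the $(\VarPhaseSpaceStable,\VarPhaseSpaceUnstable)$-direction produces the missing factor $\norme{(\VarPhaseSpaceStable(s),\VarPhaseSpaceUnstable(s))} \leq \norme{(\VarPhaseSpaceStable,\VarPhaseSpaceUnstable)}_\ExpDecaySpeed e^{\ExpDecaySpeed s}$ that makes the time integral converge against the $e^{\ExpDecaySpeed t}$ weight.

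The main obstacle is the high-order estimate \eqref{e.estimates-derivatives-operatorFP-high-order}, where both summands $\NormeSupHigherDerivativesBounded{k}{\VFParamStraight}$ and $\NormeSupHigherDerivativesBounded{k+1}{\VFParamStraight}\norme{(\VarPhaseSpaceStable,\VarPhaseSpaceUnstable)}_\ExpDecaySpeed$ must be recovered. The strategy is to expand the $k$-linear form $\DiffPointSup{(\NonLinearPartEDI,\NonLinearPartEDII)}{\VarPhaseSpaceStable(s),\VarPhaseSpaceUnstable(s),\Parameter}{k}$ applied to the $k$ vectors $((z_i(s),v_i(s)),\Parameter_i)$ by multilinearity over the $2^k$ subsets of $\{1,\dots,k\}$ that choose the $(z_i,v_i)$-component or the $\Parameter_i$-component on each slot. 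The $2^k-1$ subsets containing at least one $(z_i,v_i)$-slot gain an $e^{\ExpDecaySpeed s}$ factor from $\norme{(z_i(s),v_i(s))} \leq \norme{(z_i,v_i)}_\ExpDecaySpeed e^{\ExpDecaySpeed s}$ and, after integration, contribute the $\NormeSupHigherDerivativesBounded{k}{\VFParamStraight}$ summand using \eqref{e-controle-derivee-f-g-sup}. The remaining pure-$\Parameter_i$ term involves $\DiffPartielPointSup{(\NonLinearPartEDI,\NonLinearPartEDII)}{\VarPhaseSpaceStable(s),\VarPhaseSpaceUnstable(s),\Parameter}{k}{\Parameter}$, which by \eqref{e-controle-derivee-f-g-mu} vanishes at $(\VarPhaseSpaceStable,\VarPhaseSpaceUnstable)=(0,0)$, so an additional mean value inequality in the $(\VarPhaseSpaceStable,\VarPhaseSpaceUnstable)$-direction supplies both the missing $e^{\ExpDecaySpeed s}\norme{(\VarPhaseSpaceStable,\VarPhaseSpaceUnstable)}_\ExpDecaySpeed$ factor and the $\NormeSupHigherDerivativesBounded{k+1}{\VFParamStraight}$ bound. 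The rest is the same integration against matrix exponentials as in the $k=1$ case; this multilinear bookkeeping is the only delicate point of the proof.
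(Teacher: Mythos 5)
Your proposal is correct and essentially reproduces the paper's argument: the same splitting of $\OpFamilyGeneric{\ExpDecaySpeed}$ into $\Gamma$ plus the integral operator $\OpFamilyII{0}$, the same multilinear expansion of the $k$-th derivative of $(\NonLinearPartEDI,\NonLinearPartEDII)$ over the $2^k$ slot choices (the paper indexes this by pairs $(l,\sigma)$ with $\sigma$ an increasing shuffle), the same use of $D^k_\Parameter(\NonLinearPartEDI,\NonLinearPartEDII)(0,0,\Parameter)=0$ together with the mean value theorem to produce the $\NormeSupHigherDerivativesBounded{k+1}{\VFParamStraight}\,\norme{(\VarPhaseSpaceStable,\VarPhaseSpaceUnstable)}_\ExpDecaySpeed\, e^{\ExpDecaySpeed s}$ term, and the same absorption of the factor $1/(\ExpDecaySpeed-\BorneVPStable)$ into one extra power of $\DistanceBordSpectre{\OriginDiff}{\ExpDecaySpeed}^{1/(\DimPhaseSpace-1)}$. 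The only small deviation is the justification of smoothness: the paper writes out a Taylor--Lagrange remainder bound directly in the $\ExpDecaySpeed$-norm (which is what Fr\'echet differentiability of a map into $\FunctionSpaceGeneric{\ExpDecaySpeed}$ actually requires, since the error must be controlled uniformly over $t$ after weighting by $e^{-\ExpDecaySpeed t}$), whereas your appeal to dominated convergence under the integral sign is somewhat loose in this Banach-space setting, and the $\ExpDecaySpeed'>\ExpDecaySpeed$ trick you mention is unnecessary --- the tail integral already converges at exponent $\ExpDecaySpeed$ because $\BorneVPUnstable>\ExpDecaySpeed$.
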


\begin{proof}[Proof of lemma~\ref{lemma.smoothness-FP}]
\setcounter{MyCounterProof}{1}
One can remark that $\OpFamilyGeneric{\ExpDecaySpeed}$ is the sum of two operators, the first one being the linear map $\Gamma$ and the second one being $\OpFamilyII{0}$. Since $\StableSpaceStraight$ is a finite dimensional vector space, $\Gamma$ is smooth. It follows that we only need to prove that the operator $\OpFamilyII{0}: \FunctionSpaceGeneric{\ExpDecaySpeed} \times \SetParameter \to \FunctionSpaceGeneric{\ExpDecaySpeed}$ is smooth to prove the first part of the lemma.
Using the classical algebraic identification
\begin{equation*}
	\mathcal{L}_{k+1} \left(\FunctionSpaceGeneric{\ExpDecaySpeed}\times \SetParameter,\FunctionSpaceGeneric{\ExpDecaySpeed}\right) \simeq \mathcal{L}\left(\FunctionSpaceGeneric{\ExpDecaySpeed}\times \SetParameter,\mathcal{L}_k \left(\FunctionSpaceGeneric{\ExpDecaySpeed}\times \SetParameter,\FunctionSpaceGeneric{\ExpDecaySpeed}\right)\right)
\end{equation*}
we are going to prove that for every $k \geq 0$, $\OpFamilyII{k}$ is differentiable and $\Diff{\OpFamilyII{k}} = \OpFamilyII{k+1}$.

\proofstep{Step \theMyCounterProof: for every $k \geq 0$, $\OpFamilyII{k}$ is well defined and for every $k \geq 1$, for every $((\VarPhaseSpaceStable,\VarPhaseSpaceUnstable),\Parameter) \in \FunctionSpaceGeneric{\ExpDecaySpeed}\times \SetParameter$, $\OpFamilyII{k}((\VarPhaseSpaceStable,\VarPhaseSpaceUnstable),\Parameter)$ is a continuous $k$-linear map}
Let $k \geq 0$, $((\VarPhaseSpaceStable,\VarPhaseSpaceUnstable),\Parameter) \in \FunctionSpaceGeneric{\ExpDecaySpeed}\times \SetParameter$ and $((z_i,v_i),\Parameter_i)_{1 \leq i \leq k} \in \left(\FunctionSpaceGeneric{\ExpDecaySpeed}\times \SetParameter\right)^k$. For every $s\geq 0$,
\begin{multline*}
\DiffPointSup{(\NonLinearPartEDI,\NonLinearPartEDII)}{\VarPhaseSpaceStable(s),\VarPhaseSpaceUnstable(s),\Parameter}{k}.((z_i(s),v_i(s)),\Parameter_i)_{1 \leq i \leq k} = \\
\sum_{\substack{
		0\leq l\leq k\\
		\sigma \in \mathfrak{S}_k(l)}}
\DiffPartielGeneric{\VarPhaseSpaceStable,\VarPhaseSpaceUnstable}^l\DiffPartielGeneric{\Parameter}^{k-l}(\NonLinearPartEDI,\NonLinearPartEDII)(\VarPhaseSpaceStable(s),\VarPhaseSpaceUnstable(s),\Parameter).\bigl(\sigma.((z_i,v_i),\Parameter_i)_{1 \leq i \leq k}\bigr)
\end{multline*}
where
\begin{equation*}
	\sigma.((z_i,v_i),\Parameter_i)_{1 \leq i \leq k} = \biggl(\left(z_{\sigma(1)}(s),v_{\sigma(1)}(s)\right),\dots,\left(z_{\sigma(l)}(s),v_{\sigma(l)}(s)\right),\Parameter_{\sigma(l+1)},\dots,\Parameter_{\sigma(k)}\biggr)
\end{equation*}
and $\mathfrak{S}_k(l)$ is the set of all permutations of $\{1,\dots, k \}$ which are increasing on both the integer intervals $\left \llbracket 1,l \right \rrbracket$ and $\left \llbracket l+1,k \right \rrbracket$.
According to~\eqref{e-ligne-critique},~\eqref{e.controle-derivee-premiere-f-g-sup} and the mean value theorem, we have, for any $s \geq 0$,
\begin{equation*}
\norme{(\NonLinearPartEDI,\NonLinearPartEDII)(\VarPhaseSpaceStable(s),\VarPhaseSpaceUnstable(s),\Parameter)} \leq \NormeSupHigherDerivativesBounded{1}{\VFParamStraight} \norme{(\VarPhaseSpaceStable(s),\VarPhaseSpaceUnstable(s))} \leq e^{\ExpDecaySpeed s} \NormeSupHigherDerivativesBounded{1}{\VFParamStraight} \norme{(\VarPhaseSpaceStable,\VarPhaseSpaceUnstable)}_\ExpDecaySpeed 
\end{equation*}
According to~\eqref{e.controle-derivee-premiere-f-g-sup}, we have, for any $s \geq 0$,
\begin{equation*}
	\norme{\DiffPartielPoint{(\NonLinearPartEDI,\NonLinearPartEDII)}{\VarPhaseSpaceStable(s),\VarPhaseSpaceUnstable(s),\Parameter}{\VarPhaseSpaceStable,\VarPhaseSpaceUnstable}.(z_1(s),v_1(s))} \leq \NormeSupHigherDerivativesBounded{1}{\VFParamStraight} \norme{(\VarPhaseSpaceStable_1(s),\VarPhaseSpaceUnstable_1(s))} \leq e^{\ExpDecaySpeed s} \NormeSupHigherDerivativesBounded{1}{\VFParamStraight} \norme{(\VarPhaseSpaceStable_1,\VarPhaseSpaceUnstable_1)}_\ExpDecaySpeed 
\end{equation*}
According to~\eqref{e-controle-derivee-f-g-sup},~\eqref{e-controle-derivee-f-g-mu} and the mean value theorem, we have, for any $s \geq 0$,
\begin{equation*}
\norme{\DiffPartielPoint{(\NonLinearPartEDI,\NonLinearPartEDII)}{\VarPhaseSpaceStable(s),\VarPhaseSpaceUnstable(s),\Parameter}{\Parameter}.\Parameter_1} \leq \NormeSupHigherDerivativesBounded{2}{\VFParamStraight} \norme{(\VarPhaseSpaceStable(s),\VarPhaseSpaceUnstable(s))} \norme{\Parameter_1} \leq e^{\ExpDecaySpeed s} \NormeSupHigherDerivativesBounded{2}{\VFParamStraight} \norme{(\VarPhaseSpaceStable,\VarPhaseSpaceUnstable)}_\ExpDecaySpeed \norme{\Parameter_1} 
\end{equation*}
According to~\eqref{e-controle-derivee-f-g-sup}, we have, for any $s \geq 0$, $0 \leq l \leq k$ and $\sigma \in \mathfrak{S}_k(l)$,
\begin{multline*}
	\norme{\DiffPartielGeneric{\VarPhaseSpaceStable,\VarPhaseSpaceUnstable}^l\DiffPartielGeneric{\Parameter}^{k-l}(\NonLinearPartEDI,\NonLinearPartEDII)(\VarPhaseSpaceStable(s),\VarPhaseSpaceUnstable(s),\Parameter).\bigl(\sigma.((z_i,v_i),\Parameter_i)_{1 \leq i \leq k}\bigr)} \leq \\
	\NormeSupHigherDerivativesBounded{k}{\VFParamStraight} \prod_{i=1}^{l}\norme{(z_{\sigma(i)}(s),v_{\sigma(i)}(s))}_\ExpDecaySpeed \prod_{j=l+1}^{k} \norme{\Parameter_{\sigma(j)}} \\
	\leq  e^{l \ExpDecaySpeed s} \NormeSupHigherDerivativesBounded{k}{\VFParamStraight} \prod_{i=1}^{l}\norme{(z_{\sigma(i)},v_{\sigma(i)})}_\ExpDecaySpeed \prod_{j=l+1}^{k} \norme{\Parameter_{\sigma(j)}}
\end{multline*}
When $l=0$, the above estimate is not useful since there is no exponential decay, so we replace it with an estimate using $\NormeSupHigherDerivativesBounded{k+1}{\VFParamStraight}$ instead of $\NormeSupHigherDerivativesBounded{k}{\VFParamStraight}$. According to~\eqref{e-controle-derivee-f-g-sup},~\eqref{e-controle-derivee-f-g-mu} and the mean value theorem, we have, for any $s \geq 0$,
\begin{align*}
\norme{\DiffPartielPointSup{(\NonLinearPartEDI,\NonLinearPartEDII)}{\VarPhaseSpaceStable(s),\VarPhaseSpaceUnstable(s),\Parameter}{k}{\Parameter}.(\Parameter_i)_{1 \leq i \leq k}}
&\leq \NormeSupHigherDerivativesBounded{k+1}{\VFParamStraight} \norme{(\VarPhaseSpaceStable(s),\VarPhaseSpaceUnstable(s))} \prod_{i=1}^{k} \norme{\Parameter_i} \\
&\leq  e^{\ExpDecaySpeed s} \NormeSupHigherDerivativesBounded{k+1}{\VFParamStraight} \norme{(\VarPhaseSpaceStable,\VarPhaseSpaceUnstable)}_\ExpDecaySpeed \prod_{i=1}^{k} \norme{\Parameter_i}
\end{align*}
We now summarize the above (useful) estimates, using the inequality $e^{l \ExpDecaySpeed s}\leq e^{\ExpDecaySpeed s}$ for $l \neq 0$. For any $s \geq 0$, $0 \leq l \leq k$ and $\sigma \in \mathfrak{S}_k(l)$, we have
\begin{multline}\label{e.control-lemma-operator-all-cases}
	\norme{\DiffPartielGeneric{\VarPhaseSpaceStable,\VarPhaseSpaceUnstable}^l\DiffPartielGeneric{\Parameter}^{k-l}(\NonLinearPartEDI,\NonLinearPartEDII)(\VarPhaseSpaceStable(s),\VarPhaseSpaceUnstable(s),\Parameter).\bigl(\sigma.((z_i,v_i),\Parameter_i)_{1 \leq i \leq k}\bigr)} \leq \\
\begin{cases}
e^{\ExpDecaySpeed s}\NormeSupHigherDerivativesBounded{k+1}{\VFParamStraight} \prod_{i=1}^{k} \norme{\Parameter_i} \norme{(\VarPhaseSpaceStable,\VarPhaseSpaceUnstable)}_\ExpDecaySpeed   & \text{if $k \geq 0,l=0$} \\
e^{\ExpDecaySpeed s} \NormeSupHigherDerivativesBounded{k}{\VFParamStraight} \prod_{i=1}^{l}\norme{(z_{\sigma(i)},v_{\sigma(i)})}_\ExpDecaySpeed \prod_{j=l+1}^{k} \norme{\Parameter_{\sigma(j)}}   & \text{if $k \geq 1,l \neq 0$} 
\end{cases}
\end{multline}
It follows from~\eqref{e.control-lemma-operator-all-cases} that the map $s \mapsto e^{-(s-t)\UnstableMatrixStraight} \DiffPointSup{\NonLinearPartEDII}{\VarPhaseSpaceStable(s),\VarPhaseSpaceUnstable(s),\Parameter}{k}.((z_i(s),v_i(s)),\Parameter_i)_{1 \leq i \leq k}$ is integrable on $[t,+\infty[$, so $\OpFamilyII{k}$ is well defined. 

According to~\eqref{e.controle-polynomial-matrice-exp} and~\eqref{e.control-lemma-operator-all-cases}, and using the inequality $e^{l \ExpDecaySpeed s}\leq e^{\ExpDecaySpeed s}$, we have, for every $t \geq 0$,
\begin{multline}\label{e.control-Non-linear-part-EDO-ordre-k-I}
	\norme{\Integral{0}{t}{e^{(t-s)\StableMatrixStraight}\DiffPointSup{\NonLinearPartEDI}{\VarPhaseSpaceStable(s),\VarPhaseSpaceUnstable(s),\Parameter}{k}.((z_i(s),v_i(s)),\Parameter_i)_{1 \leq i \leq k}}{s}} \leq \\
	\frac{1}{\ExpDecaySpeed - \BorneVPStable}\frac{\ConstanteControleExpMatrixNormeII{\StableMatrix}}{\DistanceBordSpectre{\OriginDiff}{\ExpDecaySpeed}^{\frac{\DimStableSpaceStraight-1}{\DimPhaseSpace-1}}} e^{\ExpDecaySpeed t} \times
\begin{cases}
\NormeSupHigherDerivativesBounded{1}{\VFParamStraight} \norme{(\VarPhaseSpaceStable,\VarPhaseSpaceUnstable)}_\ExpDecaySpeed & \text{if $k=0$} \\
\left(\NormeSupHigherDerivativesBounded{k}{\VFParamStraight} + \NormeSupHigherDerivativesBounded{k+1}{\VFParamStraight} \norme{(\VarPhaseSpaceStable,\VarPhaseSpaceUnstable)}_\ExpDecaySpeed \right)  \prod_{i=1}^{k}\norme{\left((z_{i},v_{i}),\Parameter_i\right)}  & \text{if $k \geq 1$}
\end{cases}
\end{multline}
where we used the equality
\begin{equation*}
		\sum_{\substack{
			0\leq l\leq k\\
			\sigma \in \mathfrak{S}_k(l)}}
	\prod_{i=1}^{l}\norme{(z_{\sigma(i)},v_{\sigma(i)})}_\ExpDecaySpeed \prod_{j=l+1}^{k} \norme{\Parameter_{\sigma(j)}} = 
	\prod_{i=1}^{k} \left(\norme{(z_{i},v_{i})}_\ExpDecaySpeed + \norme{\Parameter_i}\right) = 
\prod_{i=1}^{k}\norme{\left((z_{i},v_{i}),\Parameter_i\right)}
\end{equation*}
Analogously, for every $t \geq 0$,
\begin{multline}\label{e.control-Non-linear-part-EDO-ordre-k-II}
\norme{\Integral{t}{+\infty}{e^{-(s-t)\UnstableMatrixStraight} \DiffPointSup{\NonLinearPartEDII}{\VarPhaseSpaceStable(s),\VarPhaseSpaceUnstable(s),\Parameter}{k}.((z_i(s),v_i(s)),\Parameter_i)_{1 \leq i \leq k}}{s}} \leq \\
\frac{1}{\BorneVPUnstable - \ExpDecaySpeed}\frac{\ConstanteControleExpMatrixNormeII{\UnstableMatrix}}{\DistanceBordSpectre{\OriginDiff}{\ExpDecaySpeed}^{\frac{\DimUnstableSpaceStraight-1}{\DimPhaseSpace-1}}} e^{\ExpDecaySpeed t} \times
\begin{cases}
\NormeSupHigherDerivativesBounded{1}{\VFParamStraight} \norme{(\VarPhaseSpaceStable,\VarPhaseSpaceUnstable)}_\ExpDecaySpeed & \text{if $k=0$} \\
 \left(\NormeSupHigherDerivativesBounded{k}{\VFParamStraight} + \NormeSupHigherDerivativesBounded{k+1}{\VFParamStraight} \norme{(\VarPhaseSpaceStable,\VarPhaseSpaceUnstable)}_\ExpDecaySpeed \right)  \prod_{i=1}^{k}\norme{\left((z_{i},v_{i}),\Parameter_i\right)}  & \text{if $k \geq 1$}
\end{cases}
\end{multline}
According to~\eqref{e.control-Non-linear-part-EDO-ordre-k-I}, \eqref{e.control-Non-linear-part-EDO-ordre-k-II} and the fact that $\max(\DimStableSpaceStraight,\DimUnstableSpaceStraight) \leq \DimPhaseSpace-1$,
\begin{subequations}
		\begin{equation} \label{e.control-second-part-operatorFP-firstlemma-first-case}
	\norme{\OpFamilyII{0}((\VarPhaseSpaceStable,\VarPhaseSpaceUnstable),\Parameter)}_\ExpDecaySpeed \leq 
	\frac{2\max\left(\ConstanteControleExpMatrixNormeII{\StableMatrix},\ConstanteControleExpMatrixNormeII{\UnstableMatrix}\right)}{\DistanceBordSpectre{\OriginDiff}{\ExpDecaySpeed}}  
	\NormeSupHigherDerivativesBounded{1}{\VFParamStraight} \norme{(\VarPhaseSpaceStable,\VarPhaseSpaceUnstable)}_\ExpDecaySpeed 
	\end{equation}
and for all $k \geq 1$,	
	\begin{multline} \label{e.control-second-part-operatorFP-firstlemma}
	\norme{\OpFamilyII{k}((\VarPhaseSpaceStable,\VarPhaseSpaceUnstable),\Parameter).((z_i,v_i),\Parameter_i)_{1 \leq i \leq k}}_\ExpDecaySpeed \leq \\
	\frac{2\max\left(\ConstanteControleExpMatrixNormeII{\StableMatrix},\ConstanteControleExpMatrixNormeII{\UnstableMatrix}\right)}{\DistanceBordSpectre{\OriginDiff}{\ExpDecaySpeed}} 
	\left(\NormeSupHigherDerivativesBounded{k}{\VFParamStraight} + \NormeSupHigherDerivativesBounded{k+1}{\VFParamStraight} \norme{(\VarPhaseSpaceStable,\VarPhaseSpaceUnstable)}_\ExpDecaySpeed \right)  \prod_{i=1}^{k}\norme{\left((z_{i},v_{i}),\Parameter_i\right)}
	\end{multline}
\end{subequations}
According to~\eqref{e.control-second-part-operatorFP-firstlemma}, for every $k \geq 1$, $\OpFamilyII{k}((\VarPhaseSpaceStable,\VarPhaseSpaceUnstable),\Parameter)$ is a continuous $k$-linear map whose subordinate norm satisfies
\begin{equation}\label{e.estimates-operatorFP-firstlemma}
	\normesub{\OpFamilyII{k}((\VarPhaseSpaceStable,\VarPhaseSpaceUnstable),\Parameter)}_\ExpDecaySpeed \leq 
	\frac{2\max\left(\ConstanteControleExpMatrixNormeII{\StableMatrix},\ConstanteControleExpMatrixNormeII{\UnstableMatrix}\right)}{\DistanceBordSpectre{\OriginDiff}{\ExpDecaySpeed}}   \left(\NormeSupHigherDerivativesBounded{k}{\VFParamStraight} + \NormeSupHigherDerivativesBounded{k+1}{\VFParamStraight} \norme{(\VarPhaseSpaceStable,\VarPhaseSpaceUnstable)}_\ExpDecaySpeed \right)
\end{equation}

\stepcounter{MyCounterProof}
\proofstep{Step \theMyCounterProof: for every $k \geq 0$, for every $((\VarPhaseSpaceStable,\VarPhaseSpaceUnstable),\Parameter) \in \FunctionSpaceGeneric{\ExpDecaySpeed}\times \SetParameter$, $\OpFamilyII{k}$ is differentiable at the point $((\VarPhaseSpaceStable,\VarPhaseSpaceUnstable),\Parameter)$ and $\DiffPoint{\OpFamilyII{k}}{(\VarPhaseSpaceStable,\VarPhaseSpaceUnstable),\Parameter} = \OpFamilyII{k+1}((\VarPhaseSpaceStable,\VarPhaseSpaceUnstable),\Parameter)$}
Let $k \geq 0$, $((\VarPhaseSpaceStable,\VarPhaseSpaceUnstable),\Parameter),((\Delta z,\Delta v),\Delta \Parameter)\in \FunctionSpaceGeneric{\ExpDecaySpeed}\times \SetParameter$ and $((z_i,v_i),\Parameter_i)_{1 \leq i \leq k} \in \left(\FunctionSpaceGeneric{\ExpDecaySpeed}\times \SetParameter\right)^k$. According to Taylor-Lagrange formula, for every $s\geq 0$,
\begin{multline*}
\left\lVert \DiffPointSup{(\NonLinearPartEDI,\NonLinearPartEDII)}{z(s)+\Delta \VarPhaseSpaceStable(s),\VarPhaseSpaceUnstable(s)+\Delta v(s),\Parameter+\Delta \Parameter}{k}.((z_i(s),v_i(s)),\Parameter_i)_{1 \leq i \leq k} - \right. \\
\DiffPointSup{(\NonLinearPartEDI,\NonLinearPartEDII)}{\VarPhaseSpaceStable(s),\VarPhaseSpaceUnstable(s),\Parameter}{k}.\bigl(((z_i(s),v_i(s)),\Parameter_i)_{1 \leq i \leq k}\bigr) -  \\
\left. \DiffPointSup{(\NonLinearPartEDI,\NonLinearPartEDII)}{\VarPhaseSpaceStable(s),\VarPhaseSpaceUnstable(s),\Parameter}{k+1}.\bigl(((z_i(s),v_i(s)),\Parameter_i)_{1 \leq i \leq k},(\Delta z(s),\Delta v(s),\Delta \Parameter)\bigr) \right\rVert \leq \\
\frac{1}{2} \sup_{w \in [0,1]} \norme{\Phi_s''(w)}
\end{multline*}
where
\begin{equation*}
	\Phi_s(w)= \DiffPointSup{(\NonLinearPartEDI,\NonLinearPartEDII)}{(\VarPhaseSpaceStable(s),\VarPhaseSpaceUnstable(s),\Parameter)+w(\Delta z(s),\Delta v(s),\Delta \Parameter)}{k}.((z_i(s),v_i(s)),\Parameter_i)_{1 \leq i \leq k}
\end{equation*}
By~\eqref{e.control-lemma-operator-all-cases} and computations similar to the ones done in the preceding step,
\begin{equation*}
	\sup_{w \in [0,1]} \norme{\Phi_s''(w)} \leq e^{\ExpDecaySpeed s}O_{((\Delta z,\Delta v),\Delta \Parameter) \to 0}\left(\norme{((\Delta z,\Delta v),\Delta \Parameter)}^2\right) \prod_{i=1}^{k}\norme{\left((z_{i},v_{i}),\Parameter_i\right)}
\end{equation*}
so
\begin{multline*}
	\normesub{\OpFamilyII{k}(((\VarPhaseSpaceStable,\VarPhaseSpaceUnstable),\Parameter)+((\Delta z,\Delta v),\Delta \Parameter))-\OpFamilyII{k}((\VarPhaseSpaceStable,\VarPhaseSpaceUnstable),\Parameter)-\OpFamilyII{k+1}((\VarPhaseSpaceStable,\VarPhaseSpaceUnstable),\Parameter).((\Delta z,\Delta v),\Delta \Parameter)}_\ExpDecaySpeed = \\
	O_{((\Delta z,\Delta v),\Delta \Parameter) \to 0}\left(\norme{((\Delta z,\Delta v),\Delta \Parameter)}^2\right)
\end{multline*}
By a staightforward induction on $k$, this implies that $\OpFamilyII{0}$ is smooth and for every $k \geq 1$, $\DiffSup{\OpFamilyII{0}}{k} = \OpFamilyII{k}$. As a further consequence, $\OpFamilyGeneric{\ExpDecaySpeed}$ is smooth and formula~\eqref{e.derivatives-operator-FP} holds true.

\stepcounter{MyCounterProof}
\proofstep{Step \theMyCounterProof: proof of estimates~\eqref{e.estimate-operator-FP-all-cases}}
First, estimate~\eqref{e.estimates-derivatives-operatorFP-high-order} is a direct consequence of~\eqref{e.estimates-operatorFP-firstlemma}.
Let $((\VarPhaseSpaceStable,\VarPhaseSpaceUnstable),\VarConditionInitiale,\Parameter) \in \FunctionSpaceGeneric{\ExpDecaySpeed} \times \StableSpaceStraight \times \SetParameter$ and $((\Delta z,\Delta v), \Delta \Parameter) \in \FunctionSpaceGeneric{\ExpDecaySpeed} \times \SetParameter$.
According to~\eqref{e.derivatives-operator-FP}, we have
\begin{equation*}
\DiffPartiel{\OpFamilyGeneric{\ExpDecaySpeed}}{\VarPhaseSpaceStable,\VarPhaseSpaceUnstable}((\VarPhaseSpaceStable,\VarPhaseSpaceUnstable),\VarConditionInitiale,\Parameter).(\Delta z,\Delta v) = \OpFamilyII{1}((\VarPhaseSpaceStable,\VarPhaseSpaceUnstable),\Parameter).((\Delta z,\Delta v),0)
\end{equation*}
By~\eqref{e.control-lemma-operator-all-cases} and similar computations to the ones done in the first step,
\begin{equation*}
\norme{\OpFamilyII{1}((\VarPhaseSpaceStable,\VarPhaseSpaceUnstable),\Parameter).((\Delta z,\Delta v),0)}_\ExpDecaySpeed \leq \frac{2\max\left(\ConstanteControleExpMatrixNormeII{\StableMatrix},\ConstanteControleExpMatrixNormeII{\UnstableMatrix}\right)}{\DistanceBordSpectre{\OriginDiff}{\ExpDecaySpeed}} \NormeSupHigherDerivativesBounded{1}{\VFParamStraight} \norme{(\Delta z,\Delta v)}_\ExpDecaySpeed
\end{equation*}
so, by~\eqref{e-hyp-norme-differential-exp-decay-Euclidean-norm}, \eqref{e.estimate-sup-derivatives-straight}, \eqref{e.ConstanteControleExpMatrixNormeII} and the fact that $\max\left(\ConstanteControleExpMatrix{\StableMatrix},\ConstanteControleExpMatrix{\UnstableMatrix}\right) \ConstanteInegTriangInverse{\StableSpace,\UnstableSpace} \leq \ConstanteControleExpMatrix{\OriginDiff}$, we have
\begin{equation} \label{e.control-estimee-importante-1-2}
	\frac{2\max\left(\ConstanteControleExpMatrixNormeII{\StableMatrix},\ConstanteControleExpMatrixNormeII{\UnstableMatrix}\right)}{\DistanceBordSpectre{\OriginDiff}{\ExpDecaySpeed}} \NormeSupHigherDerivativesBounded{1}{\VFParamStraight} \leq \frac{1}{2}
\end{equation}
so estimate~\eqref{e.partial-derivarive-operator-bounded-1demi} holds true.
By similar computations, we obtain
\begin{equation*}
\norme{\OpFamilyII{1}((\VarPhaseSpaceStable,\VarPhaseSpaceUnstable),\Parameter).((0,0),\Delta \Parameter)}_\ExpDecaySpeed \leq \frac{2\max\left(\ConstanteControleExpMatrixNormeII{\StableMatrix},\ConstanteControleExpMatrixNormeII{\UnstableMatrix}\right)}{\DistanceBordSpectre{\OriginDiff}{\ExpDecaySpeed}} \NormeSupHigherDerivativesBounded{2}{\VFParamStraight} \norme{(\VarPhaseSpaceStable,\VarPhaseSpaceUnstable)}_\ExpDecaySpeed \norme{\Delta \Parameter}
\end{equation*}
which implies the estimate~\eqref{e.partial-derivarive-operator-direction-parameter}. 
Estimate~\eqref{e.partial-derivarive-operator-direction-cond-ini} is a straightforward consequence of~\eqref{e.controle-polynomial-matrice-exp} and~\eqref{e.derivatives-operator-FP}. This concludes the proof of lemma~\ref{lemma.smoothness-FP}.
\end{proof}

\begin{lemme} \label{lemma.FP-smooth-estimates}
	For every $(\VarConditionInitiale,\Parameter) \in \StableSpaceStraight \times \SetParameter$, the operator $\OpFamily{\ExpDecaySpeed}{\VarConditionInitiale,\Parameter}$ admits a unique fixed point, which is independant of the choice of $\ExpDecaySpeed$ and is denoted by $(\FPStable{\VarConditionInitiale}{\Parameter},\FPUnstable{\VarConditionInitiale}{\Parameter})$. This fixed point satisifes the following control: for every $t \geq 0$,	\begin{equation}\label{e.estimate-FP-lemma}
		\norme{(\FPStable{\VarConditionInitiale}{\Parameter}(t),\FPUnstable{\VarConditionInitiale}{\Parameter}(t))} \leq \frac{2 \sqrt{2} \ConstanteControleExpMatrixNormeII{\StableMatrix}}{\DistanceBordSpectre{\OriginDiff}{\ExpDecaySpeed}^{\frac{\DimStableSpaceStraight-1}{\DimPhaseSpace-1}}} 	\norme{(\FPStable{\VarConditionInitiale}{\Parameter}(0),\FPUnstable{\VarConditionInitiale}{\Parameter}(0))} 
	\end{equation}
Moreover, the map
	\begin{equation*}
	\fonction{(\FPStableGeneric,\FPUnstableGeneric)}{\StableSpaceStraight \times \SetParameter}{\FunctionSpaceGeneric{\ExpDecaySpeed}}{(\VarConditionInitiale,\Parameter)}{(\FPStable{\VarConditionInitiale}{\Parameter},\FPUnstable{\VarConditionInitiale}{\Parameter})}
	\end{equation*}
	is smooth and, using the norm $\norme{(\VarConditionInitiale,\Parameter)}=\norme{\VarConditionInitiale}+\norme{\Parameter}$ on $\StableSpaceStraight \times \SetParameter$, we have the following estimates: for every $(\VarConditionInitiale,\Parameter) \in \StableSpaceStraight \times \SetParameter$,
\begin{subequations}
\label{e.estimates-FP-unstable-all-cases}
\begin{align}
\label{e.estimate-FPUnstable-0}
	\norme{\FPUnstable{\VarConditionInitiale}{\Parameter}}_\ExpDecaySpeed &\leq \frac{4\ConstanteControleExpMatrixNormeII{\StableMatrix}\ConstanteControleExpMatrixNormeII{\UnstableMatrix}}{ \DistanceBordSpectre{\OriginDiff}{\ExpDecaySpeed}} \NormeSupHigherDerivativesBounded{1}{\VFParamStraight} \norme{\VarConditionInitiale} \\
\label{e.estimate-FPUnstable-1-direction-VarCondIni}
	\normesub{\DiffPartiel{\FPUnstable{\VarConditionInitiale}{\Parameter}}{\VarConditionInitiale}}_\ExpDecaySpeed &\leq \frac{4\ConstanteControleExpMatrixNormeII{\StableMatrix}\ConstanteControleExpMatrixNormeII{\UnstableMatrix}}{ \DistanceBordSpectre{\OriginDiff}{\ExpDecaySpeed}} \NormeSupHigherDerivativesBounded{1}{\VFParamStraight} \\
\label{e.estimate-FPUnstable-1-direction-Parameter}
	\normesub{\DiffPartiel{\FPUnstable{\VarConditionInitiale}{\Parameter}}{\Parameter}}_\ExpDecaySpeed &\leq \frac{8\ConstanteControleExpMatrixNormeII{\StableMatrix}\ConstanteControleExpMatrixNormeII{\UnstableMatrix}}{\DistanceBordSpectre{\OriginDiff}{\ExpDecaySpeed}} \NormeSupHigherDerivativesBounded{2}{\VFParamStraight} \norme{\VarConditionInitiale} 
\end{align}
and, more generally, for every $k \geq 2$,
	\begin{equation}\label{e.estimateFP-all-derivatives}
	\normesub{\DiffSup{\FPUnstable{\VarConditionInitiale}{\Parameter}}{k}}_\ExpDecaySpeed \leq a_k\left(\frac{\max\left(\ConstanteControleExpMatrixNormeII{\StableMatrix},\ConstanteControleExpMatrixNormeII{\UnstableMatrix}\right)^2}{\DistanceBordSpectre{\OriginDiff}{\ExpDecaySpeed}^2} \NormeSupHigherDerivativesBoundedMax{k+1}{\VFParamStraight} \max\left(1,\norme{\VarConditionInitiale}\right)  \right)^{2k-1}
	\end{equation}
where $a_k$ is a positive constant independant of $\VFParam$, $(\StableSpace,\UnstableSpace)$, $\VarConditionInitiale$ and $\Parameter$.
\end{subequations}
\end{lemme}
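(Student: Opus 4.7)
For each fixed $(\VarConditionInitiale,\Parameter) \in \StableSpaceStraight \times \SetParameter$ I would apply the Banach fixed point theorem in the complete normed space $(\FunctionSpaceGeneric{\ExpDecaySpeed},\norme{\cdot}_{\ExpDecaySpeed})$: the uniform contraction estimate~\eqref{e.partial-derivarive-operator-bounded-1demi} combined with the mean value inequality yields $\norme{\OpFamily{\ExpDecaySpeed}{\VarConditionInitiale,\Parameter}(x_1)-\OpFamily{\ExpDecaySpeed}{\VarConditionInitiale,\Parameter}(x_2)}_{\ExpDecaySpeed}\leq \tfrac{1}{2}\norme{x_1-x_2}_{\ExpDecaySpeed}$, producing the unique fixed point $(\FPStable{\VarConditionInitiale}{\Parameter},\FPUnstable{\VarConditionInitiale}{\Parameter})$. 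Independence of $\ExpDecaySpeed$ follows from remark~\ref{rem.espaces-H}: for $\ExpDecaySpeed<\ExpDecaySpeed'$ both in $\ExpDecaySpeedInterval$ (satisfying~\eqref{e-hyp-norme-differential-exp-decay-Euclidean-norm}), the formula for the operator does not involve $\ExpDecaySpeed$, so the fixed point in $\FunctionSpaceGeneric{\ExpDecaySpeed}\subset\FunctionSpaceGeneric{\ExpDecaySpeed'}$ is also a fixed point in $\FunctionSpaceGeneric{\ExpDecaySpeed'}$ and must coincide with the unique fixed point there. For~\eqref{e.estimate-FP-lemma}, I would observe that by~\eqref{e-ligne-critique} the operator evaluated at the zero function satisfies $\OpFamily{\ExpDecaySpeed}{\VarConditionInitiale,\Parameter}(0)(t)=(e^{t\StableMatrixStraight}\VarConditionInitiale,0)$; the classical triangle inequality $\norme{(\FPStable{\VarConditionInitiale}{\Parameter},\FPUnstable{\VarConditionInitiale}{\Parameter})}_{\ExpDecaySpeed}\leq 2\norme{\OpFamily{\ExpDecaySpeed}{\VarConditionInitiale,\Parameter}(0)}_{\ExpDecaySpeed}$ combined with the polynomial bound~\eqref{e.controle-polynomial-matrice-exp} on $e^{s\StableMatrixStraight}$ and the trivial $\norme{\VarConditionInitiale}\leq\sqrt{2}\norme{(\FPStable{\VarConditionInitiale}{\Parameter}(0),\FPUnstable{\VarConditionInitiale}{\Parameter}(0))}$ then concludes.

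\textbf{Smoothness.} The map $(\VarConditionInitiale,\Parameter)\mapsto(\FPStable{\VarConditionInitiale}{\Parameter},\FPUnstable{\VarConditionInitiale}{\Parameter})$ is the implicit function defined by the equation $x-\OpFamilyGeneric{\ExpDecaySpeed}(x,\VarConditionInitiale,\Parameter)=0$. Lemma~\ref{lemma.smoothness-FP} ensures that this equation is smooth in all variables, while~\eqref{e.partial-derivarive-operator-bounded-1demi} makes $\Id-\DiffPartiel{\OpFamilyGeneric{\ExpDecaySpeed}}{(\VarPhaseSpaceStable,\VarPhaseSpaceUnstable)}$ everywhere invertible with inverse bounded by $2$ via the Neumann series. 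A global (parametric) version of the implicit function theorem, equivalently a contraction mapping theorem with parameters in the style of the authors' remark, then yields the smoothness of the fixed point.

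\textbf{Derivative estimates.} Differentiating once the relation $(\FPStableGeneric,\FPUnstableGeneric)=\OpFamilyGeneric{\ExpDecaySpeed}((\FPStableGeneric,\FPUnstableGeneric),\VarConditionInitiale,\Parameter)$ with respect to $(\VarConditionInitiale,\Parameter)$ produces the linear relation $(\Id-\DiffPartiel{\OpFamilyGeneric{\ExpDecaySpeed}}{(\VarPhaseSpaceStable,\VarPhaseSpaceUnstable)})\cdot\Diff{(\FPStableGeneric,\FPUnstableGeneric)} = \DiffPartiel{\OpFamilyGeneric{\ExpDecaySpeed}}{\VarConditionInitiale}+\DiffPartiel{\OpFamilyGeneric{\ExpDecaySpeed}}{\Parameter}$; combining the Neumann bound with~\eqref{e.partial-derivarive-operator-direction-cond-ini}, \eqref{e.partial-derivarive-operator-direction-parameter}, and the already proved bound~\eqref{e.estimate-FP-lemma} on $\norme{(\FPStableGeneric,\FPUnstableGeneric)}_{\ExpDecaySpeed}$ (needed in the coefficient of~\eqref{e.partial-derivarive-operator-direction-parameter}) yields~\eqref{e.estimate-FPUnstable-1-direction-VarCondIni} and~\eqref{e.estimate-FPUnstable-1-direction-Parameter} after projecting onto the $\FPUnstableGeneric$-component via its integral representation and absorbing the extra factor coming from~\eqref{e.controle-polynomial-matrice-exp} for $e^{-s\UnstableMatrixStraight}$. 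Estimate~\eqref{e.estimate-FPUnstable-0} follows analogously by inserting~\eqref{e.estimate-FP-lemma} directly into the integral expression for $\FPUnstableGeneric$. For $k\geq 2$, differentiating the fixed point equation $k$ times via the Faà di Bruno formula produces $D^k(\FPStableGeneric,\FPUnstableGeneric)=(\Id-\DiffPartiel{\OpFamilyGeneric{\ExpDecaySpeed}}{(\VarPhaseSpaceStable,\VarPhaseSpaceUnstable)})^{-1}\cdot R_k$, where $R_k$ is a polynomial expression in the derivatives $D^j\OpFamilyGeneric{\ExpDecaySpeed}$ $(1\leq j\leq k)$ evaluated at $((\FPStableGeneric,\FPUnstableGeneric),\VarConditionInitiale,\Parameter)$ and in the lower-order derivatives $D^j(\FPStableGeneric,\FPUnstableGeneric)$ for $j<k$; an induction on $k$ using~\eqref{e.estimates-derivatives-operatorFP-high-order} together with the previously established bounds yields~\eqref{e.estimateFP-all-derivatives}.

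\textbf{Main obstacle.} The main technical point is tracking the sharp polynomial exponent $2k-1$ in~\eqref{e.estimateFP-all-derivatives} through the Faà di Bruno bookkeeping. Among the terms of $R_k$, the worst ones correspond to the partitions of $\{1,\dots,k\}$ into exactly two blocks of sizes $j$ and $k-j$: substituting the inductive degrees $2j-1$ and $2(k-j)-1$ for the two occurrences of $D^{\bullet}(\FPStableGeneric,\FPUnstableGeneric)$, together with the degree $1$ coming from $D^{2}\OpFamilyGeneric{\ExpDecaySpeed}$ via~\eqref{e.estimates-derivatives-operatorFP-high-order}, produces exactly degree $1+(2j-1)+(2(k-j)-1)=2k-1$; partitions into more blocks give strictly lower degrees, and so the induction closes with the correct exponent. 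The use of the monotone quantity $\NormeSupHigherDerivativesBoundedMax{k+1}{\VFParamStraight}$ (which dominates all lower orders) and of the $\max(1,\norme{\VarConditionInitiale})$ factor is what allows one to repackage all Faà di Bruno summands into a single expression raised to the $(2k-1)$-th power without accumulating spurious cross-terms.
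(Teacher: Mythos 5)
Your proposal matches the paper's proof in all essential respects: contraction in $\FunctionSpaceGeneric{\ExpDecaySpeed}$ for existence and uniqueness, the inclusion $\FunctionSpaceGeneric{\min(\ExpDecaySpeed,\ExpDecaySpeed')}\subset\FunctionSpaceGeneric{\max(\ExpDecaySpeed,\ExpDecaySpeed')}$ for $\ExpDecaySpeed$-independence, the global inverse (equivalently parametric implicit) function theorem applied to $(\VarPhaseSpaceStable,\VarPhaseSpaceUnstable,\VarConditionInitiale,\Parameter)\mapsto(\OpFamilyGeneric{\ExpDecaySpeed}-\Id,\VarConditionInitiale,\Parameter)$ for smoothness, differentiation of the fixed-point equation with the Neumann series and projection onto the $\FPUnstableGeneric$-component for the first-order bounds, and the Faà di Bruno induction with the degree count $1+\sum_{l=1}^{j}(2i_l-1)=2k+1-j$ (maximized at $j=2$) for the exponent $2k-1$. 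The only slip is in the bookkeeping of~\eqref{e.estimate-FP-lemma}: the $\sqrt{2}$ does not come from $\norme{\VarConditionInitiale}\leq\sqrt{2}\norme{(\FPStableGeneric(0),\FPUnstableGeneric(0))}$ (one trivially has the sharper $\norme{\VarConditionInitiale}\leq\norme{(\FPStableGeneric(0),\FPUnstableGeneric(0))}$), but from passing from the $\ExpDecaySpeed$-norm, built from $\max(\norme{\FPStableGeneric(t)},\norme{\FPUnstableGeneric(t)})$, back to the Euclidean norm $\norme{(\FPStableGeneric(t),\FPUnstableGeneric(t))}$; once that conversion is put in and the unnecessary $\sqrt{2}$ dropped, your chain yields exactly the stated constant.
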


\begin{rem}
	To conclude the proof of proposition~\ref{prop.variete-gamma-stable-global-version}, we only need estimates on $\FPUnstableGeneric$, this is why we did not give estimates on $\FPStableGeneric$ in the above statement. Such estimates will be used in the following proof though.
\end{rem}

\begin{proof}[Proof of lemma~\ref{lemma.FP-smooth-estimates}]
	\setcounter{MyCounterProof}{1}
	According to~\eqref{e.partial-derivarive-operator-bounded-1demi} and the contraction mapping theorem, for all $(\VarConditionInitiale,\Parameter) \in \StableSpaceStraight \times \SetParameter$, $\OpFamily{\ExpDecaySpeed}{\VarConditionInitiale,\Parameter}$ admits a unique fixed point
	\begin{equation*}
	(\FPStable{\ExpDecaySpeed,\VarConditionInitiale}{\Parameter},\FPUnstable{\ExpDecaySpeed,\VarConditionInitiale}{\Parameter})
	\end{equation*}
	Let $\ExpDecaySpeed' \in ] \SpectreUp{\StableMatrix},\min(0,\SpectreDown{\UnstableMatrix})[$ satisfying~\eqref{e-hyp-norme-differential-exp-decay-Euclidean-norm}. According to remark~\ref{rem.espaces-H}, we have
	\begin{equation*}
	\FunctionSpaceGeneric{\min(\ExpDecaySpeed,\ExpDecaySpeed')} \subset \FunctionSpaceGeneric{\max(\ExpDecaySpeed,\ExpDecaySpeed')}
	\end{equation*}
	so the fixed point $\left(\FPStable{\min(\ExpDecaySpeed,\ExpDecaySpeed'),\VarConditionInitiale}{\Parameter},\FPUnstable{\min(\ExpDecaySpeed,\ExpDecaySpeed'),\VarConditionInitiale}{\Parameter}\right)$ is also a fixed point of $\OpFamily{\max(\ExpDecaySpeed,\ExpDecaySpeed')}{\VarConditionInitiale,\Parameter}$. By uniqueness, this proves that the two fixed points coincide. Denote this unique fixed point by $	(\FPStable{\VarConditionInitiale}{\Parameter},\FPUnstable{\VarConditionInitiale}{\Parameter})$.
	
	To prove that the fixed point depends smoothly on $(\VarConditionInitiale,\Parameter)$, the idea is to apply the global inverse function theorem to the map
	\begin{equation*}
	\fonction{G^\ExpDecaySpeed}{\FunctionSpaceGeneric{\ExpDecaySpeed} \times \StableSpaceStraight \times \SetParameter}{\FunctionSpaceGeneric{\ExpDecaySpeed} \times \StableSpaceStraight \times \SetParameter}{((\VarPhaseSpaceStable,\VarPhaseSpaceUnstable),\VarConditionInitiale,\Parameter)}{\left(\OpFamily{\ExpDecaySpeed}{\VarConditionInitiale,\Parameter}(\VarPhaseSpaceStable,\VarPhaseSpaceUnstable)-(\VarPhaseSpaceStable,\VarPhaseSpaceUnstable),\VarConditionInitiale,\Parameter\right)}
	\end{equation*}
	Indeed, according to lemma~\ref{lemma.smoothness-FP}, $G^\ExpDecaySpeed$ is smooth and according to~\eqref{e.partial-derivarive-operator-bounded-1demi}, $G^\ExpDecaySpeed$ is injective and its differential is everywhere invertible. According to the global inverse function theorem, $V^\ExpDecaySpeed:=G^\ExpDecaySpeed(\FunctionSpaceGeneric{\ExpDecaySpeed} \times \StableSpaceStraight \times \SetParameter)$ is an open set of $\FunctionSpaceGeneric{\ExpDecaySpeed} \times \StableSpaceStraight \times \SetParameter$, the map $G^\ExpDecaySpeed:\FunctionSpaceGeneric{\ExpDecaySpeed} \times \StableSpaceStraight \times \SetParameter \to V^\ExpDecaySpeed$ is a diffeomorphism and its inverse is smooth.
	Denote by 
	\begin{equation*}
		\left(G^\ExpDecaySpeed\right)^{-1}_1: V^\ExpDecaySpeed \to \FunctionSpaceGeneric{\ExpDecaySpeed}
	\end{equation*}
	the first coordinate of $\left(G^\ExpDecaySpeed\right)^{-1}$.
	By definition of $(\FPStableGeneric,\FPUnstableGeneric)$,  for every $(\VarConditionInitiale,\Parameter) \in \StableSpaceStraight \times \SetParameter$,
	\begin{equation*}
	G^\ExpDecaySpeed\bigl((\FPStable{\VarConditionInitiale}{\Parameter},\FPUnstable{\VarConditionInitiale}{\Parameter}),\VarConditionInitiale,\Parameter\bigr) = \bigl((0,0),\VarConditionInitiale,\Parameter\bigr)
	\end{equation*}
	so
	\begin{equation*}
	(\FPStable{\VarConditionInitiale}{\Parameter},\FPUnstable{\VarConditionInitiale}{\Parameter})=\left(G^\ExpDecaySpeed\right)^{-1}_1\bigl((0,0),\VarConditionInitiale,\Parameter\bigr)
	\end{equation*}
	Since $\left(G^\ExpDecaySpeed\right)^{-1}$ is smooth, this completes the first part of the proof of lemma~\ref{lemma.FP-smooth-estimates}.

Fix $(\VarConditionInitiale,\Parameter) \in \StableSpaceStraight \times \SetParameter$. From the fixed point equation
\begin{equation}\label{e.FP-equation}
	(\FPStable{\VarConditionInitiale}{\Parameter},\FPUnstable{\VarConditionInitiale}{\Parameter}) = \OpFamily{\ExpDecaySpeed}{\VarConditionInitiale,\Parameter}\left(\FPStable{\VarConditionInitiale}{\Parameter},\FPUnstable{\VarConditionInitiale}{\Parameter}\right)
\end{equation}
and~\eqref{e.controle-polynomial-matrice-exp}, \eqref{e.control-second-part-operatorFP-firstlemma-first-case}, it follows that for all $t \geq 0$,
\begin{equation*}
	\norme{(\FPStable{\VarConditionInitiale}{\Parameter},\FPUnstable{\VarConditionInitiale}{\Parameter})(t)} \leq \frac{\ConstanteControleExpMatrixNormeII{\StableMatrix}}{\DistanceBordSpectre{\OriginDiff}{\ExpDecaySpeed}^{\frac{\DimStableSpaceStraight-1}{\DimPhaseSpace-1}}} e^{\BorneVPStable t} \norme{\VarConditionInitiale} + \frac{2\max\left(\ConstanteControleExpMatrixNormeII{\StableMatrix},\ConstanteControleExpMatrixNormeII{\UnstableMatrix}\right)}{\DistanceBordSpectre{\OriginDiff}{\ExpDecaySpeed}}  \NormeSupHigherDerivativesBounded{1}{\VFParamStraight} e^{\ExpDecaySpeed t}\norme{(\FPStable{\VarConditionInitiale}{\Parameter},\FPUnstable{\VarConditionInitiale}{\Parameter})}_\ExpDecaySpeed
\end{equation*}
so, according to~\eqref{e.control-estimee-importante-1-2} and the inequality $e^{\BorneVPStable t} \leq e^{\ExpDecaySpeed t}$,
\begin{equation}\label{e.estimate-FP-0}
	\norme{(\FPStable{\VarConditionInitiale}{\Parameter},\FPUnstable{\VarConditionInitiale}{\Parameter})}_\ExpDecaySpeed \leq \frac{2\ConstanteControleExpMatrixNormeII{\StableMatrix}}{\DistanceBordSpectre{\OriginDiff}{\ExpDecaySpeed}^{\frac{\DimStableSpaceStraight-1}{\DimPhaseSpace-1}}} \norme{\VarConditionInitiale}
\end{equation}
Plugging~\eqref{e.control-Non-linear-part-EDO-ordre-k-II} (case $k=0$) into~\eqref{e.FP-equation}, we obtain, for all $t \geq 0$,
\begin{equation}\label{e.estimate-FP-unstable-0}
\norme{\FPUnstable{\VarConditionInitiale}{\Parameter}(t)} \leq \frac{1}{\BorneVPUnstable - \ExpDecaySpeed}\frac{\ConstanteControleExpMatrixNormeII{\UnstableMatrix}}{\DistanceBordSpectre{\OriginDiff}{\ExpDecaySpeed}^{\frac{\DimUnstableSpaceStraight-1}{\DimPhaseSpace-1}}} e^{\ExpDecaySpeed t}
\NormeSupHigherDerivativesBounded{1}{\VFParamStraight} 	\norme{(\FPStable{\VarConditionInitiale}{\Parameter},\FPUnstable{\VarConditionInitiale}{\Parameter})}_\ExpDecaySpeed
\end{equation}
Plugging~\eqref{e.estimate-FP-0} into~\eqref{e.estimate-FP-unstable-0} and using the equality $\DimStableSpaceStraight+\DimUnstableSpaceStraight=\DimPhaseSpace$, it follows that estimate~\eqref{e.estimate-FPUnstable-0} holds true. As a byproduct, we obtain from~\eqref{e.estimate-FP-0} that~\eqref{e.estimate-FP-lemma} holds true.

Taking the derivative of~\eqref{e.FP-equation} with respect to the variable $\VarConditionInitiale$ and using~\eqref{e.partial-derivarive-operator-bounded-1demi} and~\eqref{e.partial-derivarive-operator-direction-cond-ini}, we get
\begin{equation}\label{e.estimate-FP-1-direction-VarCondIni}
	\normesub{\DiffPartiel{(\FPStable{\VarConditionInitiale}{\Parameter},\FPUnstable{\VarConditionInitiale}{\Parameter})}{\VarConditionInitiale}}_\ExpDecaySpeed \leq \frac{2\ConstanteControleExpMatrixNormeII{\StableMatrix}}{\DistanceBordSpectre{\OriginDiff}{\ExpDecaySpeed}^{\frac{\DimStableSpaceStraight-1}{\DimPhaseSpace-1}}}
\end{equation}
Moreover, taking the derivative of~\eqref{e.FP-equation} with respect to the variable $\VarConditionInitiale$, we obtain, for every $\VarConditionInitiale_1 \in \StableSpaceStraight$ and every $t \geq 0$,
\begin{equation*}
\DiffPartiel{\FPUnstable{\VarConditionInitiale}{\Parameter}}{\VarConditionInitiale}.\VarConditionInitiale_1 (t) = -\Integral{t}{+\infty}{e^{-(s-t)\UnstableMatrixStraight} \left(\DiffPartielPoint{\NonLinearPartEDII}{\FPStable{\VarConditionInitiale}{\Parameter}(s),\FPUnstable{\VarConditionInitiale}{\Parameter}(s),\Parameter}{\VarPhaseSpaceStable,\VarPhaseSpaceUnstable}. \DiffPartiel{(\FPStable{\VarConditionInitiale}{\Parameter},\FPUnstable{\VarConditionInitiale}{\Parameter})}{\VarConditionInitiale}.\VarConditionInitiale_1 (s) \right)}{s}
\end{equation*}
so using~\eqref{e.controle-polynomial-matrice-exp} and~\eqref{e.control-lemma-operator-all-cases}, we obtain
\begin{equation}\label{e.estimate-FP-Unstable-1-direction-VarCondIni}
\normesub{\DiffPartiel{\FPUnstable{\VarConditionInitiale}{\Parameter}}{\VarConditionInitiale}}_\ExpDecaySpeed \leq \frac{1}{\BorneVPUnstable-\ExpDecaySpeed}\frac{\ConstanteControleExpMatrixNormeII{\UnstableMatrix}}{\DistanceBordSpectre{\OriginDiff}{\ExpDecaySpeed}^{\frac{\DimUnstableSpaceStraight-1}{\DimPhaseSpace-1}}} \NormeSupHigherDerivativesBounded{1}{\VFParamStraight} \normesub{\DiffPartiel{(\FPStable{\VarConditionInitiale}{\Parameter},\FPUnstable{\VarConditionInitiale}{\Parameter})}{\VarConditionInitiale}}_\ExpDecaySpeed
\end{equation}
Plugging~\eqref{e.estimate-FP-1-direction-VarCondIni} into~\eqref{e.estimate-FP-Unstable-1-direction-VarCondIni} and using the equality $\DimStableSpaceStraight+\DimUnstableSpaceStraight=\DimPhaseSpace$, it follows that estimate~\eqref{e.estimate-FPUnstable-1-direction-VarCondIni} holds true.

Taking the derivative of~\eqref{e.FP-equation} with respect to the variable $\Parameter$ and using~\eqref{e.partial-derivarive-operator-bounded-1demi} and~\eqref{e.partial-derivarive-operator-direction-parameter}, we get
\begin{equation}\label{e.estimate-FP-1-direction-Parameter}
\normesub{\DiffPartiel{(\FPStable{\VarConditionInitiale}{\Parameter},\FPUnstable{\VarConditionInitiale}{\Parameter})}{\Parameter}}_\ExpDecaySpeed \leq \frac{4\max\left(\ConstanteControleExpMatrixNormeII{\StableMatrix},\ConstanteControleExpMatrixNormeII{\UnstableMatrix}\right)}{\DistanceBordSpectre{\OriginDiff}{\ExpDecaySpeed}}\NormeSupHigherDerivativesBounded{2}{\VFParamStraight} \norme{(\FPStable{\VarConditionInitiale}{\Parameter},\FPUnstable{\VarConditionInitiale}{\Parameter})}_\ExpDecaySpeed
\end{equation}
Moreover, taking the derivative of~\eqref{e.FP-equation} with respect to the variable $\Parameter$, we obtain, for every $\Parameter_1 \in \SetParameter$ and every $t \geq 0$,
\begin{multline*}
	\DiffPartiel{\FPUnstable{\VarConditionInitiale}{\Parameter}}{\Parameter}.\Parameter_1 (t) = -\Integral{t}{+\infty}{e^{-(s-t)\UnstableMatrixStraight} \left(\DiffPartielPoint{\NonLinearPartEDII}{\FPStable{\VarConditionInitiale}{\Parameter}(s),\FPUnstable{\VarConditionInitiale}{\Parameter}(s),\Parameter}{\VarPhaseSpaceStable,\VarPhaseSpaceUnstable}. \DiffPartiel{(\FPStable{\VarConditionInitiale}{\Parameter},\FPUnstable{\VarConditionInitiale}{\Parameter})}{\Parameter}.\Parameter_1 (s) \right)}{s} \\
	-\Integral{t}{+\infty}{e^{-(s-t)\UnstableMatrixStraight} \left(\DiffPartielPoint{\NonLinearPartEDII}{\FPStable{\VarConditionInitiale}{\Parameter}(s),\FPUnstable{\VarConditionInitiale}{\Parameter}(s),\Parameter}{\Parameter}.  \Parameter_1 \right)}{s}
\end{multline*}
so using~\eqref{e.controle-polynomial-matrice-exp} and~\eqref{e.control-lemma-operator-all-cases}, we obtain
\begin{equation} \label{e.estimate-FP-unstable-1-direction-Parameter-proof-1}
	\normesub{\DiffPartiel{\FPUnstable{\VarConditionInitiale}{\Parameter}}{\Parameter}}_\ExpDecaySpeed \leq \frac{1}{\BorneVPUnstable - \ExpDecaySpeed}\frac{\ConstanteControleExpMatrixNormeII{\UnstableMatrix}}{\DistanceBordSpectre{\OriginDiff}{\ExpDecaySpeed}^{\frac{\DimUnstableSpaceStraight-1}{\DimPhaseSpace-1}}} \left(\NormeSupHigherDerivativesBounded{1}{\VFParamStraight}\normesub{\DiffPartiel{(\FPStable{\VarConditionInitiale}{\Parameter},\FPUnstable{\VarConditionInitiale}{\Parameter})}{\Parameter}}_\ExpDecaySpeed+\NormeSupHigherDerivativesBounded{2}{\VFParamStraight}\norme{(\FPStable{\VarConditionInitiale}{\Parameter},\FPUnstable{\VarConditionInitiale}{\Parameter})}_\ExpDecaySpeed\right)
\end{equation}
Plugging~\eqref{e.estimate-FP-1-direction-Parameter} into~\eqref{e.estimate-FP-unstable-1-direction-Parameter-proof-1}, we have
\begin{equation*} 
\normesub{\DiffPartiel{\FPUnstable{\VarConditionInitiale}{\Parameter}}{\Parameter}}_\ExpDecaySpeed \leq \frac{1}{\BorneVPUnstable - \ExpDecaySpeed}\frac{\ConstanteControleExpMatrixNormeII{\UnstableMatrix}}{\DistanceBordSpectre{\OriginDiff}{\ExpDecaySpeed}^{\frac{\DimUnstableSpaceStraight-1}{\DimPhaseSpace-1}}}\NormeSupHigherDerivativesBounded{2}{\VFParamStraight}\norme{(\FPStable{\VarConditionInitiale}{\Parameter},\FPUnstable{\VarConditionInitiale}{\Parameter})}_\ExpDecaySpeed \left(1+\frac{4\max\left(\ConstanteControleExpMatrixNormeII{\StableMatrix},\ConstanteControleExpMatrixNormeII{\UnstableMatrix}\right)}{\DistanceBordSpectre{\OriginDiff}{\ExpDecaySpeed}} \NormeSupHigherDerivativesBounded{1}{\VFParamStraight}\right)
\end{equation*}
Using~\eqref{e.ConstanteControleExpMatrixNormeII},~\eqref{e-hyp-norme-differential-exp-decay-Euclidean-norm},~\eqref{e.estimate-sup-derivatives-straight}, the fact that
\begin{equation*}
	\max\left(\ConstanteControleExpMatrix{\StableMatrix},\ConstanteControleExpMatrix{\UnstableMatrix}\right) \ConstanteInegTriangInverse{\StableSpace,\UnstableSpace} \leq \ConstanteControleExpMatrix{\OriginDiff}
\end{equation*}
and the inequality $\max(\DimStableSpaceStraight,\DimUnstableSpaceStraight) <n$, we get
\begin{equation*}
	\frac{4\max\left(\ConstanteControleExpMatrixNormeII{\StableMatrix},\ConstanteControleExpMatrixNormeII{\UnstableMatrix}\right)}{\DistanceBordSpectre{\OriginDiff}{\ExpDecaySpeed}} \NormeSupHigherDerivativesBounded{1}{\VFParamStraight} \leq 1
\end{equation*}
so
\begin{equation} \label{e.estimate-FP-unstable-1-direction-Parameter-proof-2}
\normesub{\DiffPartiel{\FPUnstable{\VarConditionInitiale}{\Parameter}}{\Parameter}}_\ExpDecaySpeed \leq 2\frac{1}{\BorneVPUnstable - \ExpDecaySpeed}\frac{\ConstanteControleExpMatrixNormeII{\UnstableMatrix}}{\DistanceBordSpectre{\OriginDiff}{\ExpDecaySpeed}^{\frac{\DimUnstableSpaceStraight-1}{\DimPhaseSpace-1}}}\NormeSupHigherDerivativesBounded{2}{\VFParamStraight}\norme{(\FPStable{\VarConditionInitiale}{\Parameter},\FPUnstable{\VarConditionInitiale}{\Parameter})}_\ExpDecaySpeed 
\end{equation}
Plugging~\eqref{e.estimate-FP-0} into~\eqref{e.estimate-FP-unstable-1-direction-Parameter-proof-2} and using the fact that $\DimStableSpaceStraight + \DimUnstableSpaceStraight = \DimPhaseSpace$, it follows that estimate~\eqref{e.estimate-FPUnstable-1-direction-Parameter} holds true.

We are now going to prove~\eqref{e.estimateFP-all-derivatives}.
To avoid clutter with constants independant of $\VFParam$, $(\StableSpace,\UnstableSpace)$, $\VarConditionInitiale$ and $\Parameter$ in the following estimates, we introduce the following notation: for any real positive functions $\delta_1,\delta_2$ depending on $(\VFParam,\StableSpace,\UnstableSpace,\VarConditionInitiale,\Parameter)$ we define the binary relationship $\precsim$ by
\begin{equation}\label{e.binary-relationship-constant-free}
\delta_1 \precsim \delta_2 \iff \exists C >0, \; \delta_1 \leq C \delta_2
\end{equation}
We will use the abuse of notation $\delta_1(\VFParam,\StableSpace,\UnstableSpace,\VarConditionInitiale,\Parameter) \precsim \delta_2(\VFParam,\StableSpace,\UnstableSpace,\VarConditionInitiale,\Parameter)$.
For every $k \geq 2$, for every $(\VarConditionInitiale,\Parameter) \in \StableSpaceStraight \times \SetParameter$, let
\begin{equation}\label{e.terme-principal-control-derivatives}
u_k \egaldef \frac{\max\left(\ConstanteControleExpMatrixNormeII{\StableMatrix},\ConstanteControleExpMatrixNormeII{\UnstableMatrix}\right)}{\DistanceBordSpectre{\OriginDiff}{\ExpDecaySpeed}} \max\left(1,\norme{(\FPStable{\VarConditionInitiale}{\Parameter},\FPUnstable{\VarConditionInitiale}{\Parameter})}_\ExpDecaySpeed\right) \NormeSupHigherDerivativesBoundedMax{k}{\VFParamStraight}
\end{equation}
We will use the fact that $(u_k)$ is increasing.
We are now going to prove by induction on $k$ that, for every $k \geq 1$,
\begin{equation}\label{e.conjecture-norme-FP-high-derivative}
\normesub{\DiffSup{(\FPStable{\VarConditionInitiale}{\Parameter},\FPUnstable{\VarConditionInitiale}{\Parameter})}{k}}_\ExpDecaySpeed \precsim u_{k+1}^{2k-1}
\end{equation}
According to~\eqref{e.estimate-FP-1-direction-VarCondIni} and~\eqref{e.estimate-FP-1-direction-Parameter}, we have
\begin{equation*}
\normesub{\Diff{(\FPStable{\VarConditionInitiale}{\Parameter},\FPUnstable{\VarConditionInitiale}{\Parameter})}}_\ExpDecaySpeed \precsim  u_2
\end{equation*}
which proves~\eqref{e.conjecture-norme-FP-high-derivative} in the case $k=1$. Let $k \geq 2$.
Deriving~\eqref{e.FP-equation}, we have
\begin{multline}\label{e.Faa-di-Bruno-formula-FP}
\DiffSup{(\FPStable{\VarConditionInitiale}{\Parameter},\FPUnstable{\VarConditionInitiale}{\Parameter})}{k} =
\DiffPartielPoint{\OpFamilyGeneric{\ExpDecaySpeed}}{(\FPStable{\VarConditionInitiale}{\Parameter},\FPUnstable{\VarConditionInitiale}{\Parameter}),\VarConditionInitiale,\Parameter}{\VarPhaseSpaceStable,\VarPhaseSpaceUnstable}.\DiffSup{(\FPStable{\VarConditionInitiale}{\Parameter},\FPUnstable{\VarConditionInitiale}{\Parameter})}{k} \\
+\sum_{j=2}^{k}  \sum_{\substack{i_1,\dots,i_j \geq 1 \\ i_1 + \dots +i_j=k}} C_{i_1,\dots,i_j} \DiffPointSup{\OpFamilyGeneric{\ExpDecaySpeed}}{(\FPStable{\VarConditionInitiale}{\Parameter},\FPUnstable{\VarConditionInitiale}{\Parameter}),\VarConditionInitiale,\Parameter}{j}. \\
\left(\DiffSup{((\FPStable{\VarConditionInitiale}{\Parameter},\FPUnstable{\VarConditionInitiale}{\Parameter}),\VarConditionInitiale,\Parameter)}{i_1},\dots,\DiffSup{((\FPStable{\VarConditionInitiale}{\Parameter},\FPUnstable{\VarConditionInitiale}{\Parameter}),\VarConditionInitiale,\Parameter)}{i_j}\right)
\end{multline}
where the $C_{i_1,\dots,i_j}$ are the constants appearing in the standard Fa\`a di Bruno's formula.
According to~\eqref{e.estimates-derivatives-operatorFP-high-order}, for all $j \geq 2$
\begin{equation}\label{e-derivative-operator-FP-special-case-in-FP}
	\normesub{\DiffPointSup{\OpFamilyGeneric{\ExpDecaySpeed}}{(\FPStable{\VarConditionInitiale}{\Parameter},\FPUnstable{\VarConditionInitiale}{\Parameter}),\VarConditionInitiale,\Parameter}{j}}_\ExpDecaySpeed \precsim u_{j+1}
\end{equation}
Plugging estimates~\eqref{e.partial-derivarive-operator-bounded-1demi} and~\eqref{e-derivative-operator-FP-special-case-in-FP}   into~\eqref{e.Faa-di-Bruno-formula-FP} and using the induction hypothesis, we have
\begin{align*}
	\normesub{\DiffSup{(\FPStable{\VarConditionInitiale}{\Parameter},\FPUnstable{\VarConditionInitiale}{\Parameter})}{k}}_\ExpDecaySpeed &\precsim \max_{\substack{2 \leq j \leq k \\ i_1,\dots,i_j \geq 1 \\ i_1 + \dots +i_j=k}} u_{j+1} \prod_{l=1}^{j} u_{i_l+1}^{2i_l-1} \\
	& \precsim u_{k+1} \max_{\substack{2 \leq j \leq k \\ i_1,\dots,i_j \geq 1 \\ i_1 + \dots +i_j=k}}  u_{k+1}^{\sum_{l=1}^{j} (2i_l-1)} \\
	& \precsim u_{k+1}^{2k-1}
\end{align*}
which proves~\eqref{e.conjecture-norme-FP-high-derivative} for all $k \geq 1$ by induction.
According to~\eqref{e.estimate-FP-0}, we have
\begin{equation}\label{e.estimate-FP-0-max}
	\max\left(1,\norme{(\FPStable{\VarConditionInitiale}{\Parameter},\FPUnstable{\VarConditionInitiale}{\Parameter})}_\ExpDecaySpeed\right) \precsim \frac{\max\left(\ConstanteControleExpMatrixNormeII{\StableMatrix},\ConstanteControleExpMatrixNormeII{\UnstableMatrix}\right)}{\DistanceBordSpectre{\OriginDiff}{\ExpDecaySpeed}} \max\left(1,\norme{\VarConditionInitiale}\right)
\end{equation}
Plugging~\eqref{e.estimate-FP-0-max} into~\eqref{e.conjecture-norme-FP-high-derivative}, it follows that~\eqref{e.estimateFP-all-derivatives} holds true. This concludes the proof of lemma~\ref{lemma.FP-smooth-estimates}.
\end{proof}

Let us define the map
\begin{equation*}
\fonction{\SMTGraphMapStraight}{\StableSpaceStraight \times \SetParameter}{\UnstableSpaceStraight}{(\VarConditionInitiale,\Parameter)}{\SMTGraphMapStraight_\Parameter(\VarConditionInitiale):=\FPUnstable{\VarConditionInitiale}{\Parameter}(0)}
\end{equation*}

\begin{lemme}\label{lemma.equivalence-stable-set-graph}
	For every $\Parameter \in \SetParameter$, the global $\gamma$-stable set $\SMexp{0,\VFParamStraight_\Parameter}{\ExpDecaySpeed}$ is exactly the graph of the map $\SMTGraphMapStraight_\Parameter : \StableSpaceStraight \to \UnstableSpaceStraight$. Moreover, $\SMTGraphMapStraight$ is smooth and for every $k \geq 0$, $(\VarConditionInitiale,\Parameter) \in \StableSpaceStraight \times \SetParameter$, $(\VarConditionInitiale_i,\Parameter_i)_{1 \leq i \leq k} \in \left( \StableSpaceStraight \times \SetParameter\right)^k$
	\begin{equation}\label{e.controle-graph-straight}
		\DiffPointSup{\SMTGraphMapStraight}{\VarConditionInitiale,\Parameter}{k}.(\VarConditionInitiale_i,\Parameter_i)_{1 \leq i \leq k} = \DiffSup{\FPUnstable{\VarConditionInitiale}{\Parameter}}{k}.(\VarConditionInitiale_i,\Parameter_i)_{1 \leq i \leq k}(0)
	\end{equation}
\end{lemme}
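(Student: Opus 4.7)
The plan is to reduce everything to properties of the fixed point $(\FPStable{\VarConditionInitiale}{\Parameter},\FPUnstable{\VarConditionInitiale}{\Parameter})$ already furnished by lemma~\ref{lemma.FP-smooth-estimates}, via the standard Duhamel/variation of constants correspondence between decaying orbits and fixed points of $\OpFamily{\ExpDecaySpeed}{\VarConditionInitiale,\Parameter}$.

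First I would establish the key equivalence: for any $\Parameter \in \SetParameter$ and $\VarConditionInitiale \in \StableSpaceStraight$, a continuous map $(\VarPhaseSpaceStable,\VarPhaseSpaceUnstable):[0,+\infty[ \to \StableSpaceStraight \times \UnstableSpaceStraight$ belongs to $\FunctionSpaceGeneric{\ExpDecaySpeed}$, solves~\eqref{e-EDO_initiale} and satisfies $\VarPhaseSpaceStable(0)=\VarConditionInitiale$ if and only if it is a fixed point of $\OpFamily{\ExpDecaySpeed}{\VarConditionInitiale,\Parameter}$. The forward direction applies Duhamel's formula to each component of~\eqref{e-EDO_initiale}: the stable component integrates from $0$, the unstable one from $+\infty$. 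The boundary term at infinity for the unstable component, $\lim_{T \to +\infty} e^{-(T-t)\UnstableMatrixStraight} \VarPhaseSpaceUnstable(T)$, vanishes because $\BorneVPUnstable > \ExpDecaySpeed$, $\norme{\VarPhaseSpaceUnstable(T)} \leq e^{\ExpDecaySpeed T}\norme{(\VarPhaseSpaceStable,\VarPhaseSpaceUnstable)}_\ExpDecaySpeed$ and estimate~\eqref{e.controle-polynomial-matrice-exp} on $\normesub{e^{-(T-t)\UnstableMatrixStraight}}$. The reverse direction is a direct differentiation.

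Next I would deduce the graph structure. If $(\VarConditionInitiale,\VarPhaseSpaceUnstable_0) \in \SMexp{0,\VFParamStraight_\Parameter}{\ExpDecaySpeed}$, then by~\eqref{e.gamma-global-stable-set-context-VF-parameter} the orbit $(\VarPhaseSpaceStable,\VarPhaseSpaceUnstable)$ issued from $(\VarConditionInitiale,\VarPhaseSpaceUnstable_0)$ lies in $\FunctionSpaceGeneric{\ExpDecaySpeed}$, so by the preceding equivalence it is a fixed point of $\OpFamily{\ExpDecaySpeed}{\VarConditionInitiale,\Parameter}$; uniqueness (lemma~\ref{lemma.FP-smooth-estimates}) forces $(\VarPhaseSpaceStable,\VarPhaseSpaceUnstable) = (\FPStable{\VarConditionInitiale}{\Parameter},\FPUnstable{\VarConditionInitiale}{\Parameter})$ and in particular $\VarPhaseSpaceUnstable_0 = \FPUnstable{\VarConditionInitiale}{\Parameter}(0) = \SMTGraphMapStraight_\Parameter(\VarConditionInitiale)$. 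Conversely, if $\VarPhaseSpaceUnstable_0 = \SMTGraphMapStraight_\Parameter(\VarConditionInitiale)$, then the orbit coincides with the fixed point, which lies in $\FunctionSpaceGeneric{\ExpDecaySpeed}$, so its norm is $O(e^{\ExpDecaySpeed t})$ and $(\VarConditionInitiale,\VarPhaseSpaceUnstable_0) \in \SMexp{0,\VFParamStraight_\Parameter}{\ExpDecaySpeed}$.

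Finally I would address smoothness of $\SMTGraphMapStraight$ and the derivative formula~\eqref{e.controle-graph-straight}. Consider the evaluation map
\begin{equation*}
\fonction{\mathrm{ev}_0}{\FunctionSpaceGeneric{\ExpDecaySpeed}}{\StableSpaceStraight \times \UnstableSpaceStraight}{(\VarPhaseSpaceStable,\VarPhaseSpaceUnstable)}{(\VarPhaseSpaceStable(0),\VarPhaseSpaceUnstable(0))}
\end{equation*}
which is continuous linear since $\norme{(\VarPhaseSpaceStable(0),\VarPhaseSpaceUnstable(0))} \leq \norme{(\VarPhaseSpaceStable,\VarPhaseSpaceUnstable)}_\ExpDecaySpeed$. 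Then $\SMTGraphMapStraight = \pi_{\UnstableSpaceStraight} \circ \mathrm{ev}_0 \circ (\FPStableGeneric,\FPUnstableGeneric)$, where $\pi_{\UnstableSpaceStraight}$ is the projection onto $\UnstableSpaceStraight$. Since $(\FPStableGeneric,\FPUnstableGeneric):\StableSpaceStraight \times \SetParameter \to \FunctionSpaceGeneric{\ExpDecaySpeed}$ is smooth by lemma~\ref{lemma.FP-smooth-estimates}, so is $\SMTGraphMapStraight$, and the derivative of a continuous linear map being itself, the chain rule immediately yields~\eqref{e.controle-graph-straight}.

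The only non-routine step is the vanishing of the boundary term at infinity in the Duhamel argument, which is the reason the fixed-point operator uses an integral from $+\infty$ on the unstable side; every other item is a formal consequence of lemma~\ref{lemma.FP-smooth-estimates} and the continuity of $\mathrm{ev}_0$.
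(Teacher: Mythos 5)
Your proposal is correct and follows essentially the same route as the paper: the Duhamel/variation-of-constants correspondence between $\ExpDecaySpeed$-decaying solutions and fixed points of $\OpFamily{\ExpDecaySpeed}{\VarConditionInitiale,\Parameter}$, the chain of equivalences via uniqueness of the fixed point, and the factorization of $\SMTGraphMapStraight$ through a continuous linear evaluation map composed with the smooth fixed-point map from lemma~\ref{lemma.FP-smooth-estimates}. The only cosmetic difference is that the paper evaluates $\FPUnstableGeneric$ directly in $\FunctionSpace{\ExpDecaySpeed}{\DimUnstableSpaceStraight}$ whereas you evaluate the full pair and then project onto $\UnstableSpaceStraight$, which is the same argument.
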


\begin{proof}[Proof of lemma~\ref{lemma.equivalence-stable-set-graph}]
Fix $\Parameter \in \SetParameter$.
Let $(\VarConditionInitiale,\VarPhaseSpaceUnstable_0) \in \StableSpaceStraight \times \UnstableSpaceStraight$.
We are going to prove the following equivalence:
\begin{equation*}
	(\VarConditionInitiale,\VarPhaseSpaceUnstable_0) \in \SMexp{0,\VFParamStraight_\Parameter}{\ExpDecaySpeed} \iff
	\VarPhaseSpaceUnstable_0=\SMTGraphMapStraight_\Parameter(\VarConditionInitiale)
\end{equation*}
Let $(\VarPhaseSpaceStable,\VarPhaseSpaceUnstable):\left[0,+\infty\right) \to \StableSpaceStraight \times \UnstableSpaceStraight$ be a continuous map such that $\VarPhaseSpaceStable(0)=\VarConditionInitiale$. By a straighforward computation, $(\VarPhaseSpaceStable,\VarPhaseSpaceUnstable)$ is an orbit of $\VFParamStraight_\Parameter$ (that is, a solution of~\eqref{e-EDO_initiale}) if and only if for every $0 \leq t \leq \tau$,
\begin{equation*}
\begin{split}
z(t) &= e^{t \StableMatrixStraight}\VarConditionInitiale + \int_0^t e^{(t-s)\StableMatrixStraight}\NonLinearPartEDI(\VarPhaseSpaceStable(s),\VarPhaseSpaceUnstable(s),\Parameter)ds \\
v(t)& = e^{-(\tau-t)\UnstableMatrixStraight}v(\tau) - \int_t^\tau e^{-(s-t)\UnstableMatrixStraight}\NonLinearPartEDII(\VarPhaseSpaceStable(s),\VarPhaseSpaceUnstable(s),\Parameter)ds
\end{split}
\end{equation*}
If we assume that $(\VarPhaseSpaceStable,\VarPhaseSpaceUnstable) \in \FunctionSpaceGeneric{\ExpDecaySpeed}$, then, according to~\eqref{e-ligne-critique},~\eqref{e.controle-derivee-premiere-f-g-sup} and~\eqref{e.controle-polynomial-matrice-exp}, the second integral above converges as $\tau$ goes to $+\infty$. Letting $\tau$ tend to $+\infty$, we get that $(\VarPhaseSpaceStable,\VarPhaseSpaceUnstable) \in \FunctionSpaceGeneric{\ExpDecaySpeed}$ and $(\VarPhaseSpaceStable,\VarPhaseSpaceUnstable)$ is a solution of~\eqref{e-EDO_initiale} if and only if $(\VarPhaseSpaceStable,\VarPhaseSpaceUnstable) \in \FunctionSpaceGeneric{\ExpDecaySpeed}$ and for every $t \geq 0$,
\begin{equation}\label{e-pt-fixe-0-lemma-equivalence-stable-set-graph}
\begin{split}
z(t) &= e^{t \StableMatrixStraight}\VarConditionInitiale + \int_0^t e^{(t-s)\StableMatrixStraight}\NonLinearPartEDI(\VarPhaseSpaceStable(s),\VarPhaseSpaceUnstable(s),\Parameter)ds \\
v(t)& = - \int_t^{+ \infty} e^{-(s-t)\UnstableMatrixStraight}\NonLinearPartEDII(\VarPhaseSpaceStable(s),\VarPhaseSpaceUnstable(s),\Parameter)ds
\end{split}
\end{equation}
\emph{i.e.} if and only if $(\VarPhaseSpaceStable,\VarPhaseSpaceUnstable)$ is a fixed point of $\OpFamily{\ExpDecaySpeed}{\VarConditionInitiale,\Parameter}$.

From now on $(\VarPhaseSpaceStable,\VarPhaseSpaceUnstable)$ denotes the orbit of $\VFParamStraight_\Parameter$ starting from $(\VarConditionInitiale,\VarPhaseSpaceUnstable_0)$ at $t=0$, that is, $(z(t),v(t))=\VFParamStraight_\Parameter^t(\VarConditionInitiale,\VarPhaseSpaceUnstable_0)$. 
We have the following equivalences:
\begin{align*}
	 & (\VarConditionInitiale,\VarPhaseSpaceUnstable_0) \in \SMexp{0,\VFParamStraight_\Parameter}{\ExpDecaySpeed} \\
\iff & (\VarPhaseSpaceStable,\VarPhaseSpaceUnstable) \in \FunctionSpaceGeneric{\ExpDecaySpeed} &\text{by~\eqref{e.gamma-global-stable-set-context-VF-parameter}} \\
\iff & \text{$(\VarPhaseSpaceStable,\VarPhaseSpaceUnstable)$ is a fixed point of $\OpFamily{\ExpDecaySpeed}{\VarConditionInitiale,\Parameter}$} &\text{by the above reasoning} \\
\iff & (\VarPhaseSpaceStable,\VarPhaseSpaceUnstable) = (\FPStable{\VarConditionInitiale}{\Parameter},\FPUnstable{\VarConditionInitiale}{\Parameter}) & \text{by lemma~\ref{lemma.smoothness-FP}} \\
\iff & (z(0),v(0))= (\FPStable{\VarConditionInitiale}{\Parameter}(0),\FPUnstable{\VarConditionInitiale}{\Parameter}(0)) & \text{by uniqueness in Cauchy-Lipschitz theorem} \\
\iff & (\VarConditionInitiale,\VarPhaseSpaceUnstable_0)=(\VarConditionInitiale,\SMTGraphMapStraight_\Parameter(\VarConditionInitiale)) & \text{by definition of $\SMTGraphMapStraight_\Parameter$}
\end{align*}
which concludes the first part of the proof. Let $E_0$ be the "evaluation at time $t=0$" map
\begin{equation*}
\fonction{E_0}{\FunctionSpace{\ExpDecaySpeed}{\DimUnstableSpaceStraight}}{\UnstableSpaceStraight}{v}{v(0)}
\end{equation*}
By definition of the $\ExpDecaySpeed$-norm, $E_0$ is a linear continuous map (with $\norme{E_0}\leq 1$) and as such is smooth. Since
\begin{equation*}
	\SMTGraphMapStraight = E_0 \circ \FPUnstableGeneric
\end{equation*}
it follows from lemma~\ref{lemma.smoothness-FP} that $\SMTGraphMapStraight$ is smooth and~\eqref{e.controle-graph-straight} holds true. This concludes the proof of lemma~\ref{lemma.equivalence-stable-set-graph}.
\end{proof}

Plugging estimates~\eqref{e.estimates-FP-unstable-all-cases} into~\eqref{e.controle-graph-straight} and using~\eqref{e.estimate-sup-derivatives-straight}, \eqref{e.ConstanteControleExpMatrixNormeII} and the fact that
\begin{equation*}
	\ConstanteControleExpMatrix{\StableMatrix}\ConstanteControleExpMatrix{\UnstableMatrix} \ConstanteInegTriangInverse{\StableSpace,\UnstableSpace} \leq \ConstanteControleExpMatrix{\OriginDiff}
\end{equation*}
we have,
for every $(\VarConditionInitiale,\Parameter) \in \StableSpaceStraight \times \SetParameter$,
\begin{subequations}
	\label{e.estimates-SMTgraph-straight-all-cases}
	\begin{align}
	\label{e.estimate-SMTgraph-straight-0}
	\norme{\SMTGraphMapStraight(\VarConditionInitiale,\Parameter)} &\precsim \frac{\ConstanteControleExpMatrix{\OriginDiff}}{ \DistanceBordSpectre{\OriginDiff}{\ExpDecaySpeed}} \NormeSupHigherDerivativesBounded{1}{\VFParam} \norme{\VarConditionInitiale} \\
	\label{e.estimate-SMTgraph-straight-1-direction-VarCondIni}
	\normesub{\DiffPartielPoint{\SMTGraphMapStraight}{\VarConditionInitiale,\Parameter}{\VarConditionInitiale}} &\precsim \frac{\ConstanteControleExpMatrix{\OriginDiff}}{ \DistanceBordSpectre{\OriginDiff}{\ExpDecaySpeed}} \NormeSupHigherDerivativesBounded{1}{\VFParam} \\
	\label{e.estimate-SMTgraph-straight-1-direction-Parameter}
	\normesub{\DiffPartielPoint{\SMTGraphMapStraight}{\VarConditionInitiale,\Parameter}{\Parameter}} &\precsim \frac{\ConstanteControleExpMatrix{\OriginDiff}}{ \DistanceBordSpectre{\OriginDiff}{\ExpDecaySpeed}} \NormeSupHigherDerivativesBounded{2}{\VFParam} \norme{\VarConditionInitiale} 
	\end{align}
	and, more generally, for every $k \geq 2$,
	\begin{equation}\label{e.estimate-SMTgraph-straight-all-derivatives}
	\normesub{\DiffPointSup{\SMTGraphMapStraight}{\VarConditionInitiale,\Parameter}{k}} \precsim \left(\frac{\ConstanteControleExpMatrix{\OriginDiff}^2}{\DistanceBordSpectre{\OriginDiff}{\ExpDecaySpeed}^2}\NormeSupHigherDerivativesBoundedMax{k+1}{\VFParam} \max\left(1,\norme{\VarConditionInitiale}\right)  \right)^{2k-1}
	\end{equation}
\end{subequations}
where $\precsim$ is defined by~\eqref{e.binary-relationship-constant-free}. Let us now define
\begin{equation*}
\fonction{\SMTGraphMap}{\StableSpace \times \SetParameter}{\UnstableSpace}{(\VarConditionInitiale,\Parameter)}{\PHSIsoStraight^{-1}\left(\SMTGraphMapStraight_\Parameter(\PHSIsoStraight(\VarConditionInitiale))\right)} 
\end{equation*}
One can remark that
\begin{equation*}
	\PHSIsoStraight\left(\Graph \SMTGraphMap\right) = \Graph \SMTGraphMapStraight
\end{equation*}
and, according to~\eqref{e.defin-VFParam-Straight}, we have, for every $\Parameter \in \SetParameter$,
\begin{equation*}
\PHSIsoStraight\left(\SMexp{0,\VFParam_\Parameter}{\ExpDecaySpeed}\right) = \SMexp{0,\VFParamStraight_\Parameter}{\ExpDecaySpeed}
\end{equation*}
so, according to lemma~\ref{lemma.equivalence-stable-set-graph}, we get that item~\eqref{item.graph-structure-global-estimate} of proposition~\ref{prop.variete-gamma-stable-global-version} holds true. 

We are now going to prove that the estimates~\eqref{e.estimates-SMT-graph-global-estimates} hold true.
According to the fact that $\PHSIsoStraight_{|\StableSpace}$ and $(\PHSIsoStraight^{-1})_{|\UnstableSpaceStraight}$ are isometries, it follows that estimates~\eqref{e.estimates-SMTgraph-straight-all-cases} hold true for $\SMTGraphMap$ instead of $\SMTGraphMapStraight$, up to a formal replacement of $\VarConditionInitiale \in \StableSpaceStraight$ by $\VarPhaseSpaceStable \in \StableSpace$. To conclude, it suffices to remark that these estimates are valid for all $\ExpDecaySpeed \in \ExpDecaySpeedInterval$ satisfying~\eqref{e-hyp-norme-differential-exp-decay-Euclidean-norm}. It is straightforward to check that the function $\DistanceBordSpectre{\OriginDiff}{\ExpDecaySpeed}$ defined for all $\ExpDecaySpeed \in ] \SpectreUp{\StableMatrix},\min(0,\SpectreDown{\UnstableMatrix})]$ satisfying~\eqref{e-hyp-norme-differential-exp-decay-Euclidean-norm} is maximal at the point $\min\left(0,(\SpectreUp{\StableMatrix}+\SpectreDown{\UnstableMatrix}\right)/2)$ and its maximum is more than $(2^{\DimPhaseSpace-1} \gphs{\OriginDiff})^{-1}$, where $\gphs{\OriginDiff}$ is defined by~\eqref{e.gap-PHS}. Letting $\ExpDecaySpeed$ tend to $\min\left(0,(\SpectreUp{\StableMatrix}+\SpectreDown{\UnstableMatrix}\right)/2)$ in estimates~\eqref{e.estimates-SMTgraph-straight-all-cases}, it follows that estimates~\eqref{e.estimates-SMT-graph-global-estimates} hold true for some constants $\ConstI[0], \ConstI[1],\dots$ independant of $\VFParam$, $(\StableSpace,\UnstableSpace)$, $\VarConditionInitiale$ and $\Parameter$.

It remains to prove that item~\eqref{item.local-stable-set-cas-global} holds true.
Fix $\Parameter \in \SetParameter$. Let $(z,v)$ be an orbit of $\VFParam_\Parameter$. By definition of $\VFParamStraight_\Parameter$, $L(z,v)$ is an orbit of $\VFParamStraight_\Parameter$. According to~\eqref{e.estimate-FP-lemma}, we have, for all $t \geq 0$,
\begin{equation*}
	\norme{L(z(t),v(t))} \leq \frac{2 \sqrt{2} \ConstanteControleExpMatrixNormeII{\StableMatrix}}{\DistanceBordSpectre{\OriginDiff}{\ExpDecaySpeed}^{\frac{\DimStableSpaceStraight-1}{\DimPhaseSpace-1}}} 	\norme{L(z(0),v(0))} 
\end{equation*}
so, using~\eqref{e.norme-conjuguaison}, we get
\begin{equation*}
\norme{(z(t),v(t))} \leq \frac{4 \ConstanteControleExpMatrixNormeII{\StableMatrix} \ConstanteInegTriangInverse{\StableSpace,\UnstableSpace}}{\DistanceBordSpectre{\OriginDiff}{\ExpDecaySpeed}^{\frac{\DimStableSpaceStraight-1}{\DimPhaseSpace-1}}} \norme{(z(0),v(0))} \\
\end{equation*}
Letting $\ExpDecaySpeed$ tend to $\min\left(0,(\SpectreUp{\StableMatrix}+\SpectreDown{\UnstableMatrix}\right)/2)$ in the above estimate, there exists a positive constant $\ConstantEstimateStableManifoldThm$ (independant of $\VFParam$, $(\StableSpace,\UnstableSpace)$, $\VarConditionInitiale$, $\Parameter$ and $(z,v)$) such that for all $t \geq 0$,
\begin{equation*}
\norme{(z(t),v(t))}	\leq \ConstantEstimateStableManifoldThm \ConstanteControleExpMatrix{\OriginDiff} \gphs{\OriginDiff} \norme{(z(0),v(0))}
\end{equation*}
The above estimate implies that for every $ \eta >0$, for every $0 < \delta \leq \frac{\eta}{\ConstantEstimateStableManifoldThm \ConstanteControleExpMatrix{\OriginDiff} \gphs{\OriginDiff}}$, we have
\begin{equation*}
\SMexp{0,\VFParam_\Parameter}{\ExpDecaySpeed} \cap B_{\PhaseSpaceVF}(0,\delta) \subset \LSMexp{0,\VFParam_\Parameter}{\ExpDecaySpeed}{\eta} \cap B_{\PhaseSpaceVF}(0,\delta)
\end{equation*}
The other inclusion being straightforward, item~\eqref{item.local-stable-set-cas-global} holds true.
This concludes the proof of proposition~\ref{prop.variete-gamma-stable-global-version}.
\end{proof}

\subsection{Local estimates}
\label{section.local-estimates}
In this section, we state and prove a precise version of the local stable manifold theorem~\ref{thm.variete-gamma-stable-local-version}. Given a parameter $\TailleBoule >0$ and a smooth family of vector fields $(\VFParam_\Parameter)_{\Parameter \in \SetParameter}$ satisfying the hypotheses~\ref{hypo.singularity} and~\ref{hypo.PHS}, let
\begin{equation*}
\NormeSupHigherDerivativesLoc{1}{\VFParam,\TailleBoule}\egaldef \sup_{(\VariablePhaseSpace,\Parameter) \in \overline{B}((0,0),\TailleBoule)} \normesub{\DiffPartiel{\VFParam}{\VariablePhaseSpace}(\VariablePhaseSpace,\Parameter)-\OriginDiff} 
\end{equation*}
where $\OriginDiff:=\DiffPartielPoint{\VFParam}{0,0}{\VariablePhaseSpace}$, and for every integer $k \geq 2$, let
\begin{equation*}
\NormeSupHigherDerivativesLoc{k}{\VFParam,\TailleBoule} \egaldef  \sup_{2 \leq j \leq k} \sup_{(\VariablePhaseSpace,\Parameter)\in \overline{B}((0,0),\TailleBoule)} \normesub{\DiffPointSup{\VFParam}{\VariablePhaseSpace,\Parameter}{j}}
\end{equation*}
where $\overline{B}((0,0),\TailleBoule)$ is the closed ball in $\PhaseSpaceVF \times \SetParameter$ of center $(0,0)$ and radius $\TailleBoule$ and let
\begin{equation}\label{e.norme-max-cas-local}
	\NormeSupHigherDerivativesLocMax{k}{\VFParam,\TailleBoule} \egaldef \max\left(1, \NormeSupHigherDerivativesLoc{k}{\VFParam,\TailleBoule}\right)
\end{equation}

\begin{thm}[Local estimates for the stable manifold theorem with parameters]\label{thm.variete-gamma-stable-local-version}
There exists a positive constant $\ConstII \geq 1$ and a sequence of positive constants $(\ConstII[k])_{k \in \N}$ (both depending on the dimension $\DimPhaseSpace$) such that for every smooth family of vector fields $(\VFParam_\Parameter)_{\Parameter \in \SetParameter}$ satisfying the hypotheses~\ref{hypo.singularity} and~\ref{hypo.PHS}, for every partially hyperbolic splitting $(\StableSpace,\UnstableSpace)$ of $\OriginDiff:= \DiffPartielPoint{\VFParam}{0,0}{\VariablePhaseSpace}$ and for every $\TailleBoule >0$, we have:
\begin{enumerate}
\item\label{item.uniqueness} Uniqueness of the stable sets: for every $\ExpDecaySpeed, \ExpDecaySpeed' \in \ExpDecaySpeedInterval$ (see~\eqref{e.exp-decay-speed-interval}), for every $\Parameter \in \SetParameter$ such that
\begin{equation*}
	\norme{\Parameter} \leq \frac{1}{\ConstII} \min\left(\frac{\min\left(\DistanceBordSpectre{\OriginDiff}{\ExpDecaySpeed},\DistanceBordSpectre{\OriginDiff}{\ExpDecaySpeed'}\right)}{\ConstanteControleExpMatrix{\OriginDiff} \NormeSupHigherDerivativesLocMax{2}{\VFParam,\TailleBoule}}, \TailleBoule\right)
\end{equation*}
we have
	\begin{equation}
	\SMexp{0,\VFParam_\Parameter}{\ExpDecaySpeed} = \SMexp{0,\VFParam_\Parameter}{\ExpDecaySpeed'}
	\end{equation}
\item\label{item.graph-structure-local-estimate} Graph structure: there exists a (non unique) smooth map
\begin{equation*}
\fonction{\SMTGraphMapLocal}{\StableSpace \times \SetParameter}{\UnstableSpace}{(\VarPhaseSpaceStable,\Parameter)}{\SMTGraphMapLocal_\Parameter(\VarPhaseSpaceStable)}
\end{equation*}
such that for every $\ExpDecaySpeed \in \ExpDecaySpeedInterval$, for every $\Parameter \in \SetParameter$ satisfying
\begin{equation*}
\norme{\Parameter} \leq \frac{1}{\ConstII} \min\left(\frac{\min\left(\DistanceBordSpectre{\OriginDiff}{\ExpDecaySpeed},\gphs{\OriginDiff}^{-1}\right)}{\ConstanteControleExpMatrix{\OriginDiff} \NormeSupHigherDerivativesLocMax{2}{\VFParam,\TailleBoule}}, \TailleBoule \right)
\end{equation*}
for every $0< \eta \leq \tilde{\eta}$ and for every $0 < \delta \leq \frac{\eta}{\ConstantEstimateStableManifoldThm \ConstanteControleExpMatrix{\OriginDiff} \gphs{\OriginDiff}}$, we have
\begin{equation}\label{e.graph-local-structure}
\LSMexp{0,\VFParam_\Parameter}{\ExpDecaySpeed}{\eta} \cap B_{\PhaseSpaceVF}(0,\delta) = \Graph \left(\SMTGraphMapLocal_\Parameter\right) \cap B_{\PhaseSpaceVF}(0,\delta)
\end{equation}
where
\begin{equation*}
\tilde{\eta} = \frac{1}{\ConstII} \min \left( \left(\gphs{\OriginDiff} \ConstanteControleExpMatrix{\OriginDiff} \NormeSupHigherDerivativesLocMax{2}{\VFParam,\TailleBoule}\right)^{-1}, r \right) \quad \text{(see~\eqref{e.ConstanteControleExpMatrixNorme} and~\eqref{e.gap-PHS})}
\end{equation*}
\item \label{item.estimates-SMT-graph-local-estimates} Controls on $\SMTGraphMapLocal$: for every $(\VarPhaseSpaceStable,\Parameter) \in B_{\StableSpace}\left(0,\tilde{\delta}\right) \times B_{\SetParameter}\left(0,\tilde{\delta}\right)$,
	\begin{subequations}
		\label{e.estimates-SMT-graph-local-estimates}
		\begin{align}
		\label{e-controle-phi-0-local-estimates}
		\norme{\SMTGraphMapLocal(\VarPhaseSpaceStable,\Parameter)} &\leq \ConstII[0] \gphs{\OriginDiff}^2 \ConstanteControleExpMatrix{\OriginDiff}^2 \NormeSupHigherDerivativesLoc{2}{\VFParam,\TailleBoule} (\norme{\VarPhaseSpaceStable}+\norme{\Parameter}) \norme{\VarPhaseSpaceStable} \\
		\label{e-controle-phi-1-derivative-stable-variable-local-estimates}
		\normesub{\DiffPartielPoint{\SMTGraphMapLocal}{\VarPhaseSpaceStable,\Parameter}{z}} &\leq \ConstII[1]
		\gphs{\OriginDiff}^2
		\ConstanteControleExpMatrix{\OriginDiff}^2 \NormeSupHigherDerivativesLoc{2}{\VFParam,\TailleBoule} (\norme{\VarPhaseSpaceStable}+\norme{\Parameter})\\
		\label{e-controle-phi-1-derivative-parameter-variable-local-estimates}
		\normesub{\DiffPartielPoint{\SMTGraphMapLocal}{\VarPhaseSpaceStable,\Parameter}{\Parameter}} &\leq
		\ConstII[1]
		\gphs{\OriginDiff}
		\ConstanteControleExpMatrix{\OriginDiff} \NormeSupHigherDerivativesLoc{2}{\VFParam,\TailleBoule} \norme{\VarPhaseSpaceStable}
		\end{align}
where
\begin{equation*}
	\tilde{\delta} = \frac{1}{\ConstII \gphs{\OriginDiff} \ConstanteControleExpMatrix{\OriginDiff}} \min \left( \left(\gphs{\OriginDiff} \ConstanteControleExpMatrix{\OriginDiff} \NormeSupHigherDerivativesLocMax{2}{\VFParam,\TailleBoule}\right)^{-1}, r \right) 
\end{equation*}
		and more generally, using the norm $\norme{(\VarPhaseSpaceStable,\Parameter)}=\norme{\VarPhaseSpaceStable}+\norme{\Parameter}$ on $\StableSpace \times \SetParameter$, we have, for all $k \geq 2$,
		\begin{equation}\label{e-controle-phi-2-local-estimates}
		\normesub{\DiffPointSup{\SMTGraphMapLocal}{\VarPhaseSpaceStable,\Parameter}{k}} \leq
		\ConstII[k] \left(\gphs{\OriginDiff}^2 \ConstanteControleExpMatrix{\OriginDiff}^2\max\left(\gphs{\OriginDiff} \ConstanteControleExpMatrix{\OriginDiff} \NormeSupHigherDerivativesLocMax{2}{\VFParam,\TailleBoule},\TailleBoule^{-1} \right)^{k-1}
		\NormeSupHigherDerivativesLocMax{k+1}{\VFParam,\TailleBoule} \right)^{2k-1}
		\end{equation}
	\end{subequations}
\end{enumerate}
\end{thm}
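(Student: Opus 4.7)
The strategy, already advertised in the setup of section~\ref{section.stable-manifold-theorem-notations-setup}, is to reduce theorem~\ref{thm.variete-gamma-stable-local-version} to proposition~\ref{prop.variete-gamma-stable-global-version} by truncating the non-linear part of $\VFParam$. Fix once and for all a smooth plateau function $\chi:\PhaseSpaceVF\times\SetParameter\to[0,1]$ equal to $1$ on $\overline{B}(0,1/2)$ and vanishing outside $B(0,1)$; for a scale $\lambda>0$ to be chosen later, set $\chi_\lambda(\VariablePhaseSpace,\Parameter):=\chi((\VariablePhaseSpace,\Parameter)/\lambda)$ and
\[
\tilde{\VFParam}(\VariablePhaseSpace,\Parameter) := \OriginDiff\VariablePhaseSpace + \chi_\lambda(\VariablePhaseSpace,\Parameter)\bigl(\VFParam(\VariablePhaseSpace,\Parameter)-\OriginDiff\VariablePhaseSpace\bigr).
\]
Then $\tilde{\VFParam}=\VFParam$ on $\overline{B}(0,\lambda/2)$, the origin is a singularity of $\tilde{\VFParam}_\Parameter$ for every $\Parameter$, and $\DiffPartielPoint{\tilde{\VFParam}}{0,0}{\VariablePhaseSpace}=\OriginDiff$; hypotheses~\ref{hypo.singularity} and~\ref{hypo.PHS} persist for $\tilde{\VFParam}$ with the same splitting $(\StableSpace,\UnstableSpace)$.

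\textbf{Estimating the derivatives of $\tilde{\VFParam}$.} Writing $g:=\VFParam-\OriginDiff\VariablePhaseSpace$, one has $g(0,\Parameter)=0$ and $\DiffPoint{g}{0,0}=0$, so Taylor's formula on $\overline{B}(0,\lambda)\subset\overline{B}(0,\TailleBoule)$ (assuming $\lambda\leq\TailleBoule$) yields $\norme{g(\VariablePhaseSpace,\Parameter)}\lesssim \NormeSupHigherDerivativesLoc{2}{\VFParam,\TailleBoule}\lambda^2$ and $\normesub{\DiffPoint{g}{\VariablePhaseSpace,\Parameter}}\lesssim \NormeSupHigherDerivativesLoc{2}{\VFParam,\TailleBoule}\lambda$. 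Applying Leibniz's formula to $\chi_\lambda g$ with $\normesub{D^j\chi_\lambda}\lesssim \lambda^{-j}$ then produces
\[
\NormeSupHigherDerivativesBounded{1}{\tilde{\VFParam}} \;\lesssim\; \lambda\,\NormeSupHigherDerivativesLoc{2}{\VFParam,\TailleBoule},
\qquad
\NormeSupHigherDerivativesBounded{k}{\tilde{\VFParam}} \;\lesssim\; \lambda^{2-k}\,\NormeSupHigherDerivativesLocMax{k}{\VFParam,\TailleBoule}\quad (k\geq 2),
\]
so hypothesis~\ref{hypo.Hyp-derivatives-bounded} holds automatically. Choosing
\[
\lambda \;\asymp\; \min\!\left(\TailleBoule,\;\frac{\DistanceBordSpectre{\OriginDiff}{\ExpDecaySpeed}}{\ConstanteControleExpMatrix{\OriginDiff}\,\NormeSupHigherDerivativesLocMax{2}{\VFParam,\TailleBoule}}\right)
\]
with a sufficiently small implicit constant forces the first bound above to make both hypothesis~\ref{hypo.PHS-spectral-gap} and condition~\eqref{e-hyp-norme-differential-exp-decay} valid for $\tilde{\VFParam}$; letting $\ExpDecaySpeed$ sweep through $\ExpDecaySpeedInterval$ and recalling that $\DistanceBordSpectre{\OriginDiff}{\ExpDecaySpeed}^{-1}$ is comparable to $\gphs{\OriginDiff}$ at the optimal $\ExpDecaySpeed$ produces the threshold on $\norme{\Parameter}$ stated in item~\eqref{item.graph-structure-local-estimate}.

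\textbf{Transferring the global result.} Apply proposition~\ref{prop.variete-gamma-stable-global-version} to $\tilde{\VFParam}$ to obtain a smooth map $\SMTGraphMap:\StableSpace\times\SetParameter\to\UnstableSpace$ whose graph is $\SMexp{0,\tilde{\VFParam}_\Parameter}{\ExpDecaySpeed}$; set $\SMTGraphMapLocal:=\SMTGraphMap$. For $\norme{\Parameter}\leq\lambda/4$, the identity $\tilde{\VFParam}_\Parameter=\VFParam_\Parameter$ on $\overline{B}(0,\lambda/4)$ shows that every orbit of $\VFParam_\Parameter$ remaining in $B(0,\eta)$ for all $t\geq 0$ with $\eta\leq\lambda/4$ is also an orbit of $\tilde{\VFParam}_\Parameter$ and vice versa; combined with item~\eqref{item.local-stable-set-cas-global} of the global proposition, this yields~\eqref{e.graph-local-structure}. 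Item~\eqref{item.uniqueness} follows immediately: any point of $\SMexp{0,\VFParam_\Parameter}{\ExpDecaySpeed}$ has a forward orbit converging to $0$, so eventually entering $\overline{B}(0,\lambda/4)$, where it coincides with an orbit of $\tilde{\VFParam}_\Parameter$ lying in $\SMexp{0,\tilde{\VFParam}_\Parameter}{\ExpDecaySpeed}$; item~\eqref{item.graph-structure-global-estimate} of the global proposition shows the latter set is independent of $\ExpDecaySpeed$.

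\textbf{Local $C^k$-bounds and main obstacle.} Inserting the derivative estimates of step~2 into~\eqref{e.estimates-SMT-graph-global-estimates} and using the chosen $\lambda$ yields~\eqref{e.estimates-SMT-graph-local-estimates}: the factor $\lambda^{1-k}$ appearing in $\NormeSupHigherDerivativesBoundedMax{k+1}{\tilde{\VFParam}}$, raised to the power $2k-1$ via~\eqref{e-controle-phi-2}, combines with the relation $\lambda^{-1}\asymp\max\!\bigl(\gphs{\OriginDiff}\ConstanteControleExpMatrix{\OriginDiff}\NormeSupHigherDerivativesLocMax{2}{\VFParam,\TailleBoule},\TailleBoule^{-1}\bigr)$ to produce exactly the factor displayed in~\eqref{e-controle-phi-2-local-estimates}. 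No step of the argument is dynamically subtle; the genuine difficulty is the bookkeeping, namely calibrating $\lambda$ so that hypothesis~\ref{hypo.PHS-spectral-gap} and~\eqref{e-hyp-norme-differential-exp-decay} hold uniformly in $\ExpDecaySpeed\in\ExpDecaySpeedInterval$, so that the regions on which the graph identity and the $C^k$-estimates are asserted fit inside $\overline{B}(0,\lambda/4)$, and so that the polynomial combinations of $\gphs{\OriginDiff}$, $\ConstanteControleExpMatrix{\OriginDiff}$, $\NormeSupHigherDerivativesLocMax{k+1}{\VFParam,\TailleBoule}$ and $\TailleBoule^{-1}$ collapse into the compact forms displayed in the theorem.
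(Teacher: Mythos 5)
Your overall architecture matches the paper's: truncate the nonlinear part at a scale $\lambda$, verify the truncated family satisfies the global hypotheses, apply proposition~\ref{prop.variete-gamma-stable-global-version}, and transfer back via the fact that $\tilde{\VFParam}=\VFParam$ near the origin. The derivative estimates for the truncated field, the choice of $\lambda$, the justification of item~\eqref{item.uniqueness}, the identification of local stable sets, and the $k\geq 2$ part of item~\eqref{item.estimates-SMT-graph-local-estimates} are all handled essentially as the paper does.

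However, there is a genuine gap in the last step, and it affects precisely the estimates~\eqref{e-controle-phi-0-local-estimates} and~\eqref{e-controle-phi-1-derivative-stable-variable-local-estimates}. These claim bounds proportional to $(\norme{\VarPhaseSpaceStable}+\norme{\Parameter})\norme{\VarPhaseSpaceStable}$ and to $(\norme{\VarPhaseSpaceStable}+\norme{\Parameter})$ respectively, i.e.\ bounds that vanish (resp.\ to second order, resp.\ to first order) at $(\VarPhaseSpaceStable,\Parameter)=(0,0)$. Plugging the truncated-field bound $\NormeSupHigherDerivativesBounded{1}{\tilde\VFParam}\lesssim\lambda\NormeSupHigherDerivativesLoc{2}{\VFParam,\TailleBoule}$ into the global estimates~\eqref{e-controle-phi-0}--\eqref{e-controle-phi-1-derivative-stable-variable} with a \emph{fixed} truncation scale $\lambda$ only yields
\begin{equation*}
\norme{\SMTGraphMapLocal(\VarPhaseSpaceStable,\Parameter)} \lesssim \gphs{\OriginDiff}\ConstanteControleExpMatrix{\OriginDiff}\,\lambda\,\NormeSupHigherDerivativesLoc{2}{\VFParam,\TailleBoule}\,\norme{\VarPhaseSpaceStable}
\quad\text{and}\quad
\normesub{\DiffPartielPoint{\SMTGraphMapLocal}{\VarPhaseSpaceStable,\Parameter}{\VarPhaseSpaceStable}} \lesssim \gphs{\OriginDiff}\ConstanteControleExpMatrix{\OriginDiff}\,\lambda\,\NormeSupHigherDerivativesLoc{2}{\VFParam,\TailleBoule},
\end{equation*}
where $\lambda$ is a constant; these do not reduce to the asserted bounds near the origin, where $\norme{\VarPhaseSpaceStable}+\norme{\Parameter}\ll\lambda$. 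The issue does not arise for the $\Parameter$-derivative (the global estimate~\eqref{e-controle-phi-1-derivative-parameter-variable} already carries $\NormeSupHigherDerivativesBounded{2}{\VFParam}\norme{\VarPhaseSpaceStable}$, and $\NormeSupHigherDerivativesBounded{2}{\tilde\VFParam}\lesssim\NormeSupHigherDerivativesLoc{2}{\VFParam,\TailleBoule}$ uniformly in $\lambda$), nor for $k\geq 2$, where the theorem only asserts a constant bound and your $\lambda^{1-k}$ bookkeeping closes the argument. The paper closes the $k\in\{0,1\}$ gap by working not with a single truncation but with a \emph{pointwise adapted} scale: it first shows that for every $0<\TailleTronc\leq\tilde\TailleTronc$ the maps $\SMTGraphMapII^{\TailleTronc}$ and $\SMTGraphMapII^{\tilde\TailleTronc}=\SMTGraphMapLocal$ agree on a ball of radius $\propto\TailleTronc$ around the origin (so their derivatives agree there), and then, for each fixed $(\VarPhaseSpaceStable,\Parameter)$, applies the global estimate to the truncation at the scale $\TailleTronc(\VarPhaseSpaceStable,\Parameter)\asymp\gphs{\OriginDiff}\ConstanteControleExpMatrix{\OriginDiff}(\norme{\VarPhaseSpaceStable}+\norme{\Parameter})$. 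That choice substitutes the linear factor $(\norme{\VarPhaseSpaceStable}+\norme{\Parameter})$ for $\lambda$ in the displayed inequalities and produces the sharp bounds. You will need to insert this "shrink the truncation scale to match the point" step to make the $C^0$ and $C^1$ estimates come out.
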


\begin{rem}
If the singularity is hyperbolic ($\SpectreDown{\OriginDiff_{|\UnstableSpace}} > 0$), then the global $\ExpDecaySpeed$-stable set $\SMexp{0,\VFParam_\Parameter}{\ExpDecaySpeed}$ coincide with the global stable set $\SM{0,\VFParam_\Parameter}$ (for $\Parameter$ sufficiently small). 
\end{rem}

\begin{rem}
	If one is working with a different norm than the Euclidean one, one will have the same result but with different constants $\ConstII,\ConstII[0], \ConstII[1],\dots$
\end{rem}

\begin{proof}[Proof of theorem~\ref{thm.variete-gamma-stable-local-version}]
Fix a smooth family of vector fields $(\VFParam_\Parameter)_{\Parameter \in \SetParameter}$ satisfying the hypotheses~\ref{hypo.singularity} and~\ref{hypo.PHS}, a partially hyperbolic splitting $(\StableSpace,\UnstableSpace)$ of $\OriginDiff=\DiffPartielPoint{\VFParam}{0,0}{\VariablePhaseSpace}$ and $\TailleBoule >0$.
	Fix a smooth "plateau" map $\chi:[0,+\infty] \to [0,1]$ such that
	\begin{equation*}
		\chi(u)=\begin{cases}
		1 & \text{if $0 \leq u \leq 1$} \\
		0 & \text{if $u \geq 2$}
		\end{cases}
	\end{equation*}
For every $k \geq 1$, let $a_k=\max\left(1,\sup_{u \geq 0}\abs{\chi^{(k)}(u)}\right)$.
For any $0 <\TailleTronc \leq 1$, let us define the "truncated" smooth family of vector fields $(\TroncVFParam{\TailleTronc}_\Parameter)_{\Parameter \in \SetParameter}$ by
\begin{equation*}
\forall (\VariablePhaseSpace,\Parameter) \in \PhaseSpaceVF \times \SetParameter,
	\TroncVFParam{\TailleTronc}(\VariablePhaseSpace,\Parameter) = \OriginDiff \VariablePhaseSpace + \chi\left(\frac{\norme{(\VariablePhaseSpace,\Parameter)}^2}{\TailleTronc^2}\right)\theta(\VariablePhaseSpace,\Parameter)
\end{equation*}
where $\theta(\VariablePhaseSpace,\Parameter)= \VFParam(\VariablePhaseSpace,\Parameter)-\OriginDiff \VariablePhaseSpace$.
We now state a lemma about $\TroncVFParam{\TailleTronc}$.
\begin{lemme}\label{lemma.Tronc-VFParam}
	There exists a sequence of constants $(c_k)_{k \geq 1}$, $c_k \geq 1$, independant of $\VFParam$, $(\StableSpace,\UnstableSpace)$ and $\TailleBoule$, such that for every $0 <\TailleTronc \leq \min(1,\TailleBoule/\sqrt{2})$,
\begin{enumerate}
	\item $\TroncVFParam{\TailleTronc}$ is a smooth family of vector fields satisfying the hypotheses~\ref{hypo.singularity},~\ref{hypo.PHS} and~\ref{hypo.Hyp-derivatives-bounded}.
	
	\item\label{e.same-origin-diff} $\DiffPartielPoint{\TroncVFParam{\TailleTronc}}{0,0}{\VariablePhaseSpace}= \OriginDiff$.
	
	\item We have the following estimates on the derivatives of $\TroncVFParam{\TailleTronc}$:
\begin{subequations}
	\label{e.controle-derivatives-tronc-VFParam}
	\begin{align}
	\label{e.controle-champ-tronc-derivee-1}
	\NormeSupHigherDerivativesBounded{1}{\TroncVFParam{\TailleTronc}} &\leq c_1 \TailleTronc \NormeSupHigherDerivativesLoc{2}{\VFParam,\TailleBoule} \\
	\label{e.controle-champ-tronc-derivee-superieures}
	\forall k \geq 2,	\NormeSupHigherDerivativesBounded{k}{\TroncVFParam{\TailleTronc}} &\leq c_k \TailleTronc^{2-k} \NormeSupHigherDerivativesLoc{k}{\VFParam,\TailleBoule}
	\end{align}
\end{subequations}
\end{enumerate}
Moreover, for every $\ExpDecaySpeed \in \ExpDecaySpeedInterval$, for every $0< \TailleTronc \leq \TailleTronc(\ExpDecaySpeed)$ where
\begin{equation}\label{e.Taille-tronc-associated-with-expdecayspeed}
\TailleTronc(\ExpDecaySpeed) \egaldef \min\left(\frac{1}{c_1\ConstI}\frac{\DistanceBordSpectre{\OriginDiff}{\ExpDecaySpeed}}{\ConstanteControleExpMatrix{\OriginDiff} \NormeSupHigherDerivativesLocMax{2}{\VFParam,\TailleBoule}}, \frac{\TailleBoule}{\sqrt{2}}\right)  \in \left(0,\min(1,\TailleBoule/\sqrt{2})\right]
\end{equation}
for every $\Parameter \in \SetParameter$ such that $\norme{\Parameter} \leq \TailleTronc/2$, for every $0 < \eta \leq \TailleTronc/2$, for every $0 < \delta \leq \frac{\eta}{\ConstantEstimateStableManifoldThm \ConstanteControleExpMatrix{\OriginDiff} \gphs{\OriginDiff}}$, we have
\begin{equation}\label{e.LSM-tronc-VFParam}
\LSMexp{0,\VFParam_\Parameter}{\ExpDecaySpeed}{\eta} \cap B_{\PhaseSpaceVF}(0,\delta) = \SMexp{0,\TroncVFParam{\TailleTronc}_\Parameter}{\ExpDecaySpeed} \cap B_{\PhaseSpaceVF}(0,\delta)
\end{equation}
\end{lemme}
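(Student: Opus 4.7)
The plan is to verify the three enumerated items of the lemma in order, then establish the final equality~\eqref{e.LSM-tronc-VFParam} as a consequence of proposition~\ref{prop.variete-gamma-stable-global-version} applied to $\TroncVFParam{\TailleTronc}$. The first item is immediate: smoothness of $\TroncVFParam{\TailleTronc}$ follows from that of $\chi$ and $\VFParam$; hypothesis~\ref{hypo.singularity} holds because $\theta(0,\Parameter) = \VFParam(0,\Parameter)-0 = 0$; hypothesis~\ref{hypo.PHS} (with the same splitting $(\StableSpace,\UnstableSpace)$ as for $\VFParam$) follows from the computation $\DiffPartielPoint{\TroncVFParam{\TailleTronc}}{0,0}{\VariablePhaseSpace} = \OriginDiff + \chi(0)\DiffPartielPoint{\theta}{0,0}{\VariablePhaseSpace} = \OriginDiff$, since $\DiffPartielPoint{\theta}{0,0}{\VariablePhaseSpace} = \DiffPartielPoint{\VFParam}{0,0}{\VariablePhaseSpace}-\OriginDiff = 0$; this simultaneously yields item~2 of the lemma. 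Hypothesis~\ref{hypo.Hyp-derivatives-bounded} holds because $\TroncVFParam{\TailleTronc}$ coincides with the linear map $\VariablePhaseSpace \mapsto \OriginDiff\VariablePhaseSpace$ outside the compact ball of radius $\TailleTronc\sqrt{2} \leq \TailleBoule$, on which all derivatives of $\VFParam$ and $\chi$ are bounded.

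For the derivative estimates~\eqref{e.controle-derivatives-tronc-VFParam}, write $\TroncVFParam{\TailleTronc}(\VariablePhaseSpace,\Parameter) = \OriginDiff \VariablePhaseSpace + \psi(\VariablePhaseSpace,\Parameter)\theta(\VariablePhaseSpace,\Parameter)$ with $\psi(\VariablePhaseSpace,\Parameter) := \chi(\norme{(\VariablePhaseSpace,\Parameter)}^2/\TailleTronc^2)$ and expand $\DiffSup{(\psi\theta)}{k}$ via the Leibniz rule. Two gains of a power of $\TailleTronc$ come from Taylor-expanding $\theta$ at the origin: since $\theta(0,0)=0$ and $\DiffPoint{\theta}{0,0}=0$, we have $\norme{\theta(\VariablePhaseSpace,\Parameter)} \leq \tfrac{1}{2} \NormeSupHigherDerivativesLoc{2}{\VFParam,\TailleBoule}\norme{(\VariablePhaseSpace,\Parameter)}^2$ and $\normesub{\DiffPoint{\theta}{\VariablePhaseSpace,\Parameter}} \leq \NormeSupHigherDerivativesLoc{2}{\VFParam,\TailleBoule}\norme{(\VariablePhaseSpace,\Parameter)}$ on the support of $\psi$ (which lies in the ball of radius $\TailleTronc\sqrt{2} \leq \TailleBoule$), while for $j \geq 2$ we have $\DiffSup{\theta}{j} = \DiffSup{\VFParam}{j}$, so $\normesub{\DiffSup{\theta}{j}} \leq \NormeSupHigherDerivativesLoc{j}{\VFParam,\TailleBoule}$. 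For the cutoff, Faà di Bruno combined with the bounds on $(\VariablePhaseSpace,\Parameter) \mapsto \norme{(\VariablePhaseSpace,\Parameter)}^2/\TailleTronc^2$ (whose first derivative is bounded by $2\sqrt{2}/\TailleTronc$ on the support, whose second derivative is the constant $2\Id/\TailleTronc^2$, and whose higher derivatives vanish) gives $\normesub{\DiffSup{\psi}{j}} \leq c'_j \TailleTronc^{-j}$. Inserting these bounds into the Leibniz expansion and checking that the two Taylor powers of $\TailleTronc$ always compensate the $\TailleTronc^{-k}$ blow-up coming from $\DiffSup{\psi}{k}$, one obtains~\eqref{e.controle-champ-tronc-derivee-1} for $k=1$ and~\eqref{e.controle-champ-tronc-derivee-superieures} for $k\geq 2$, with constants $c_k$ depending only on $\DimPhaseSpace$ and on $\sup \abs{\chi^{(j)}}$ for $j \leq k$.

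For the final equality~\eqref{e.LSM-tronc-VFParam}, the purpose of the definition~\eqref{e.Taille-tronc-associated-with-expdecayspeed} of $\TailleTronc(\ExpDecaySpeed)$ is precisely to ensure, via~\eqref{e.controle-champ-tronc-derivee-1}, the bound $\NormeSupHigherDerivativesBounded{1}{\TroncVFParam{\TailleTronc}} \leq \ConstI^{-1}\DistanceBordSpectre{\OriginDiff}{\ExpDecaySpeed}/\ConstanteControleExpMatrix{\OriginDiff}$, which is hypothesis~\eqref{e-hyp-norme-differential-exp-decay} of proposition~\ref{prop.variete-gamma-stable-global-version} applied to $\TroncVFParam{\TailleTronc}$; the factor $c_1 \ConstI$ in~\eqref{e.Taille-tronc-associated-with-expdecayspeed} is calibrated for exactly this purpose. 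Items~\eqref{item.graph-structure-global-estimate} and~\eqref{item.local-stable-set-cas-global} of that proposition applied to $\TroncVFParam{\TailleTronc}$ then give $\SMexp{0,\TroncVFParam{\TailleTronc}_\Parameter}{\ExpDecaySpeed} \cap B_{\PhaseSpaceVF}(0,\delta) = \LSMexp{0,\TroncVFParam{\TailleTronc}_\Parameter}{\ExpDecaySpeed}{\eta} \cap B_{\PhaseSpaceVF}(0,\delta)$, since both sides equal the intersection of the same graph with $B_{\PhaseSpaceVF}(0,\delta)$. To bridge with $\VFParam$, observe that when $\norme{\Parameter} \leq \TailleTronc/2$ and $\norme{\VariablePhaseSpace} \leq \eta \leq \TailleTronc/2$ we have $\norme{(\VariablePhaseSpace,\Parameter)}^2 \leq \TailleTronc^2/2 \leq \TailleTronc^2$, so $\chi = 1$ and $\TroncVFParam{\TailleTronc}_\Parameter = \VFParam_\Parameter$ pointwise in this region. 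Consequently, any orbit of one of the two fields which stays in $B_{\PhaseSpaceVF}(0,\eta)$ is also an orbit of the other, with identical exponential decay rate, so $\LSMexp{0,\VFParam_\Parameter}{\ExpDecaySpeed}{\eta} \cap B_{\PhaseSpaceVF}(0,\delta) = \LSMexp{0,\TroncVFParam{\TailleTronc}_\Parameter}{\ExpDecaySpeed}{\eta} \cap B_{\PhaseSpaceVF}(0,\delta)$; chaining this with the previous identity proves~\eqref{e.LSM-tronc-VFParam}.

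The main obstacle is the combinatorial bookkeeping of the Leibniz/Faà di Bruno expansion in the second step: one has to check that, across all ways of distributing the $k$ derivatives between the two factors $\psi$ and $\theta$ (and internally between the $\chi^{(l)}$ and the gradients of the squared norm arising from Faà di Bruno), the worst-case term always scales like $\TailleTronc^{2-k}$, with the two ``missing'' powers of $\TailleTronc$ supplied by the Taylor vanishing of $\theta$ through order one. All remaining steps are direct applications of the definitions and of proposition~\ref{prop.variete-gamma-stable-global-version} applied to $\TroncVFParam{\TailleTronc}$.
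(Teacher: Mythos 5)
Your proof is correct and takes essentially the same route as the paper's: verify items (1)--(2) from the coincidence of $\TroncVFParam{\TailleTronc}$ with $\VFParam$ near the origin and with $\OriginDiff\VariablePhaseSpace$ far away; derive the estimates~\eqref{e.controle-derivatives-tronc-VFParam} via Leibniz applied to $\chi^\TailleTronc \cdot \theta$, feeding in Fa\`a di Bruno for $\chi^\TailleTronc$ (giving $\TailleTronc^{-j}$) and the Taylor vanishing of $\theta$ to order one (giving $\TailleTronc^2$ and $\TailleTronc$ gains on the lowest two derivatives of $\theta$, while higher derivatives coincide with those of $\VFParam$); finally, combine items~\eqref{item.graph-structure-global-estimate} and~\eqref{item.local-stable-set-cas-global} of proposition~\ref{prop.variete-gamma-stable-global-version} applied to $\TroncVFParam{\TailleTronc}$ with the observation that the two vector fields coincide on the region $\{\norme{\VariablePhaseSpace} \leq \eta, \norme{\Parameter} \leq \TailleTronc/2\}$. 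One small point of precision in your favour: you correctly cite both items~\eqref{item.graph-structure-global-estimate} and~\eqref{item.local-stable-set-cas-global} to conclude $\SMexp{0,\TroncVFParam{\TailleTronc}_\Parameter}{\ExpDecaySpeed} \cap B(0,\delta) = \LSMexp{0,\TroncVFParam{\TailleTronc}_\Parameter}{\ExpDecaySpeed}{\eta} \cap B(0,\delta)$, whereas the paper abbreviates and cites only item~\eqref{item.local-stable-set-cas-global}; and the paper verifies item~2 of the lemma more cheaply, by noting that $\TroncVFParam{\TailleTronc}$ coincides with $\VFParam$ on a neighbourhood of the origin rather than expanding the product rule as you do.
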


\begin{proof}[Proof of lemma~\ref{lemma.Tronc-VFParam}]
Fix $0 <\TailleTronc \leq \min(1,\TailleBoule/\sqrt{2})$. Let $(\VariablePhaseSpace,\Parameter) \in \PhaseSpaceVF \times \SetParameter$. By definition of $\chi$, we have
\begin{equation}\label{e.valeurs-truncated-VF}
	\TroncVFParam{\TailleTronc}(\VariablePhaseSpace,\Parameter) =
	\begin{cases}
	\VFParam(\VariablePhaseSpace,\Parameter) & \text{if $\norme{(\VariablePhaseSpace,\Parameter)} \leq \TailleTronc$}\\
	\OriginDiff \VariablePhaseSpace & \text{if $\norme{(\VariablePhaseSpace,\Parameter)} \geq \TailleTronc \sqrt{2}$}
	\end{cases}
\end{equation}
It follows from~\eqref{e.valeurs-truncated-VF} that $\DiffPartielPoint{\TroncVFParam{\TailleTronc}}{0,0}{\VariablePhaseSpace}= \OriginDiff$ and $\TroncVFParam{\TailleTronc}$ satisfies the hypotheses~\ref{hypo.singularity},~\ref{hypo.PHS} and~\ref{hypo.Hyp-derivatives-bounded}.

We are now going to prove estimates~\eqref{e.controle-derivatives-tronc-VFParam}. According to~\eqref{e.valeurs-truncated-VF}, we only need estimates on the derivatives of $\TroncVFParam{\TailleTronc}$ on $B_{\PhaseSpaceVF \times \SetParameter}(0,\TailleTronc \sqrt{2})$.
As in the proof of lemma~\ref{lemma.FP-smooth-estimates} (see~\eqref{e.binary-relationship-constant-free}), we introduce a notation to avoid clutter with constants independant of $\VFParam,\TailleTronc,\TailleBoule,\VariablePhaseSpace$ and $\Parameter$ in the following estimates:
for any real positive functions $\delta_1,\delta_2$ depending on $(\VFParam,\TailleTronc,\TailleBoule,\VariablePhaseSpace,\Parameter)$ where $0 < \TailleTronc \leq \min(1,\TailleBoule/\sqrt{2})$ and $(\VariablePhaseSpace,\Parameter) \in B(0,\TailleTronc \sqrt{2})$, we define the binary relationship $\precsim$ by
\begin{equation}\label{e.binary-relationship-constant-free-2}
\delta_1 \precsim \delta_2 \iff \exists C >0, \; \delta_1 \leq C \delta_2
\end{equation}
We will use the abuse of notation $\delta_1(\VFParam,\TailleTronc,\TailleBoule,\VariablePhaseSpace,\Parameter) \precsim \delta_2(\VFParam,\TailleTronc,\TailleBoule,\VariablePhaseSpace,\Parameter)$.
Using the fact that $\theta(0,0) = 0$, $\DiffPoint{\theta}{0,0}=0$ and $\DiffSup{\theta}{k}=\DiffSup{\VFParam}{k}$ for all $k \geq 2$, if follows from the mean value theorem that
\begin{equation}\label{e.controle-part-non-linear-lemme-troncVFParam}
\begin{split}
	\norme{\theta(\VariablePhaseSpace,\Parameter)} &\precsim \TailleTronc^2 \NormeSupHigherDerivativesLoc{2}{\VFParam,\TailleBoule} \\
	\normesub{\DiffPoint{\theta}{\VariablePhaseSpace,\Parameter}} & \precsim \TailleTronc \NormeSupHigherDerivativesLoc{2}{\VFParam,\TailleBoule} \\
\forall k \geq 2,	\NormeSupHigherDerivativesLoc{k}{\theta} &= \NormeSupHigherDerivativesLoc{k}{\VFParam,\TailleBoule}
\end{split}
\end{equation}
For every $(\VariablePhaseSpace,\Parameter) \in B(0,\TailleTronc \sqrt{2})$, let
\begin{equation*}
	N^{\TailleTronc}(\VariablePhaseSpace,\Parameter) = \frac{\norme{(\VariablePhaseSpace,\Parameter)}^2}{\TailleTronc^2}
\end{equation*}
and let $\chi^\TailleTronc = \chi \circ N^{\TailleTronc}$. Using the standard Fa\`a di Bruno's formula, we have, for all $j \geq 1$,
\begin{equation}\label{e.control-chi-taille-tronc}
	\normesub{\DiffPointSup{\chi^\TailleTronc}{\VariablePhaseSpace,\Parameter}{j}} \precsim \TailleTronc^{-j}
\end{equation}
Using estimates~\eqref{e.controle-part-non-linear-lemme-troncVFParam} and~\eqref{e.control-chi-taille-tronc}, we have
\begin{equation*}
	\normesub{\DiffPoint{\TroncVFParam{\TailleTronc}}{\VariablePhaseSpace,\Parameter}-A} \precsim \TailleTronc^{-1} \TailleTronc^2 \NormeSupHigherDerivativesLoc{2}{\VFParam,\TailleBoule}+\TailleTronc \NormeSupHigherDerivativesLoc{2}{\VFParam,\TailleBoule} \precsim \TailleTronc \NormeSupHigherDerivativesLoc{2}{\VFParam,\TailleBoule}
\end{equation*}
Since $A = \DiffPartielPoint{\TroncVFParam{\TailleTronc}}{0,0}{\VariablePhaseSpace}$, it follows that estimate~\eqref{e.controle-champ-tronc-derivee-1} holds true for some constant $c_1 \geq 1$ independant of $\VFParam$, $(\StableSpace,\UnstableSpace)$, $\TailleTronc$ and $\TailleBoule$.
Using Leibniz formula and estimates~\eqref{e.controle-part-non-linear-lemme-troncVFParam},~\eqref{e.control-chi-taille-tronc}, we have, for all $k\geq 2$,
\begin{equation*}
	\normesub{\DiffPointSup{(\chi^\TailleTronc \theta)}{\VariablePhaseSpace,\Parameter}{k}} \precsim \TailleTronc^{2-k}\NormeSupHigherDerivativesLoc{k}{\VFParam,\TailleBoule}
\end{equation*}
Since $\DiffSup{(\chi^\TailleTronc \theta)}{k} = \DiffSup{\TroncVFParam{\TailleTronc}}{k}$ for all $k \geq 2$, it follows that~\eqref{e.controle-champ-tronc-derivee-superieures} holds true for some constant $c_k \geq 1$ independant of $\VFParam$, $(\StableSpace,\UnstableSpace)$, $\TailleTronc$ and $\TailleBoule$.

Now, let us fix $\ExpDecaySpeed \in \ExpDecaySpeedInterval$. Let $0< \TailleTronc \leq \TailleTronc(\ExpDecaySpeed)$ (see~\eqref{e.Taille-tronc-associated-with-expdecayspeed}). According to~\eqref{e.controle-champ-tronc-derivee-1}, condition~\eqref{e-hyp-norme-differential-exp-decay} is satisfied for $\ExpDecaySpeed$ and $\TroncVFParam{\TailleTronc}$ so according to item~\eqref{item.local-stable-set-cas-global} of proposition~\ref{prop.variete-gamma-stable-global-version}, we have, for every $\Parameter \in \SetParameter$, for every $\eta >0$ and for every $0 < \delta \leq \frac{\eta}{\ConstantEstimateStableManifoldThm \ConstanteControleExpMatrix{\OriginDiff} \gphs{\OriginDiff}}$,
\begin{equation*}
	\LSMexp{0,\TroncVFParam{\TailleTronc}_\Parameter}{\ExpDecaySpeed}{\eta} \cap B_{\PhaseSpaceVF}(0,\delta) = \SMexp{0,\TroncVFParam{\TailleTronc}_\Parameter}{\ExpDecaySpeed} \cap B_{\PhaseSpaceVF}(0,\delta)
\end{equation*}
According to~\eqref{e.valeurs-truncated-VF}, for every $\Parameter \in \SetParameter$ such that $\norme{\Parameter} \leq \TailleTronc/2$ and for every $0 < \eta \leq \TailleTronc/2$, we have
\begin{equation*}
	\LSMexp{0,\VFParam_\Parameter}{\ExpDecaySpeed}{\eta} = \LSMexp{0,\TroncVFParam{\TailleTronc}_\Parameter}{\ExpDecaySpeed}{\eta}
\end{equation*}
so~\eqref{e.LSM-tronc-VFParam} holds true.
This concludes the proof of lemma~\ref{lemma.Tronc-VFParam}.
\end{proof}

According to lemma~\ref{lemma.Tronc-VFParam}, for every $0 <\TailleTronc \leq \min(1,\TailleBoule/\sqrt{2})$, $\TroncVFParam{\TailleTronc}$ is a smooth family of vector fields satisfying the hypotheses~\ref{hypo.singularity},~\ref{hypo.PHS} and~\ref{hypo.Hyp-derivatives-bounded} and $(\StableSpace,\UnstableSpace)$ is a partially hyperbolic splitting of $\DiffPartielPoint{\TroncVFParam{\TailleTronc}}{0,0}{\VariablePhaseSpace}=\OriginDiff$. Denote by $\SMTGraphMapII^\TailleTronc$ the smooth map associated with $\TroncVFParam{\TailleTronc}$ and $(\StableSpace,\UnstableSpace)$ by proposition~\ref{prop.variete-gamma-stable-global-version} (well defined for all $\TailleTronc$ small enough by~\eqref{e.controle-champ-tronc-derivee-1}).

Let $\ExpDecaySpeed, \ExpDecaySpeed' \in \ExpDecaySpeedInterval$. Let $\TailleTronc = \min(\TailleTronc(\ExpDecaySpeed),\TailleTronc(\ExpDecaySpeed'))$ (see~\eqref{e.Taille-tronc-associated-with-expdecayspeed}).
Estimate~\eqref{e.controle-champ-tronc-derivee-1} implies that $\ExpDecaySpeed$ and $\ExpDecaySpeed'$ satisfy~\eqref{e-hyp-norme-differential-exp-decay} for $\TroncVFParam{\TailleTronc}$. In particular $\SMTGraphMapII^\TailleTronc$ is well defined. Let $\Parameter \in \SetParameter$ such that $\norme{\Parameter} \leq \TailleTronc/2$, let $0 < \eta \leq \TailleTronc/2$ and let $0 < \delta \leq \frac{\eta}{\ConstantEstimateStableManifoldThm \ConstanteControleExpMatrix{\OriginDiff} \gphs{\OriginDiff}}$.
We have
\begin{align*}
\LSMexp{0,\VFParam_\Parameter}{\ExpDecaySpeed}{\eta} \cap B_{\PhaseSpaceVF}(0,\delta)
&= \SMexp{0,\TroncVFParam{\TailleTronc}_\Parameter}{\ExpDecaySpeed} \cap B_{\PhaseSpaceVF}(0,\delta) & \text{using~\eqref{e.LSM-tronc-VFParam}}\\
&= \Graph \left(\SMTGraphMapII^\TailleTronc_\Parameter\right) \cap B_{\PhaseSpaceVF}(0,\delta) & \text{using item~\eqref{item.graph-structure-global-estimate} of proposition~\ref{prop.variete-gamma-stable-global-version}}\\
\end{align*}
and since the above computation holds true for $\ExpDecaySpeed'$ as well, it follows that
\begin{equation*}
	\LSMexp{0,\VFParam_\Parameter}{\ExpDecaySpeed}{\eta} \cap B_{\PhaseSpaceVF}(0,\delta) = \LSMexp{0,\VFParam_\Parameter}{\ExpDecaySpeed'}{\eta} \cap B_{\PhaseSpaceVF}(0,\delta)
\end{equation*}
and finally,
\begin{equation}\label{e.uniqueness-property}
\SMexp{0,\VFParam_\Parameter}{\ExpDecaySpeed} = \SMexp{0,\VFParam_\Parameter}{\ExpDecaySpeed'}
\end{equation}
It follows that item~\eqref{item.uniqueness} of theorem~\ref{thm.variete-gamma-stable-local-version} holds true.

Let
\begin{equation*}
	\tilde{\ExpDecaySpeed} \egaldef \frac{\SpectreUp{\OriginDiff_{|\StableSpace}}+\min\left(0,\SpectreDown{\OriginDiff_{|\UnstableSpace}}\right)}{2}
\end{equation*}
One can remark that
\begin{equation*}
	\DistanceBordSpectre{\OriginDiff}{\tilde{\ExpDecaySpeed}} \geq \left(2^{n-1}\gphs{\OriginDiff}\right)^{-1}
\end{equation*}
Let
\begin{equation}\label{e.Taille-tronc-canonique}
\tilde{\TailleTronc} \egaldef  \min\left(\left(c_1\ConstI2^{n-1}\gphs{\OriginDiff} \ConstanteControleExpMatrix{\OriginDiff} \NormeSupHigherDerivativesLocMax{2}{\VFParam,\TailleBoule}\right)^{-1},\frac{\TailleBoule}{\sqrt{2}}\right) \leq \TailleTronc(\tilde{\ExpDecaySpeed})
\end{equation}
Let $\SMTGraphMapLocal \egaldef \SMTGraphMapII^{\tilde{\TailleTronc}}$.
Estimate~\eqref{e.controle-champ-tronc-derivee-1} implies that $\tilde{\ExpDecaySpeed}$ satisfies~\eqref{e-hyp-norme-differential-exp-decay} for $\TroncVFParam{\tilde{\TailleTronc}}$ so $\SMTGraphMapLocal$ is well defined.
According to proposition~\ref{prop.variete-gamma-stable-global-version} and lemma~\ref{lemma.Tronc-VFParam}, for every $\Parameter \in \SetParameter$ such that $\norme{\Parameter} \leq \tilde{\TailleTronc}/2$, for every $0< \eta \leq \tilde{\TailleTronc}/2$ and for every $0 < \delta \leq \frac{\eta}{\ConstantEstimateStableManifoldThm \ConstanteControleExpMatrix{\OriginDiff} \gphs{\OriginDiff}}$, we have
\begin{equation*}
\LSMexp{0,\VFParam_\Parameter}{\tilde{\ExpDecaySpeed}}{\eta} \cap B_{\PhaseSpaceVF}(0,\delta) = \Graph \left(\SMTGraphMapLocal_\Parameter\right) \cap B_{\PhaseSpaceVF}(0,\delta)
\end{equation*}
According to~\eqref{e.uniqueness-property}, for every $\ExpDecaySpeed \in \ExpDecaySpeedInterval$, for every $\Parameter \in \SetParameter$ such that $\norme{\Parameter} \leq \min(\tilde{\TailleTronc},\TailleTronc(\ExpDecaySpeed))/2$, for every $0< \eta \leq \tilde{\TailleTronc}/2$ and for every $0 < \delta \leq \frac{\eta}{\ConstantEstimateStableManifoldThm \ConstanteControleExpMatrix{\OriginDiff} \gphs{\OriginDiff}}$, we have
\begin{equation}\label{e.LSM-graph-phi}
\LSMexp{0,\VFParam_\Parameter}{\ExpDecaySpeed}{\eta} \cap B_{\PhaseSpaceVF}(0,\delta) = \Graph \left(\SMTGraphMapLocal_\Parameter\right) \cap B_{\PhaseSpaceVF}(0,\delta)
\end{equation}
so item~\eqref{item.graph-structure-local-estimate} of theorem~\ref{thm.variete-gamma-stable-local-version} holds true.

We are now going to prove estimates~\eqref{e.estimates-SMT-graph-local-estimates}.
Using~\eqref{e.LSM-tronc-VFParam}, one can remark that for every $0 < \TailleTronc \leq \tilde{\TailleTronc}$, for every $\norme{\Parameter} \leq \TailleTronc/2$, we have
\begin{equation*}\label{e.graph-locally-coincide}
	\Graph \left(\SMTGraphMapLocal_\Parameter\right) \cap B_{\PhaseSpaceVF}(0,\delta(\TailleTronc)) = \Graph \left(\SMTGraphMapII^\TailleTronc_\Parameter\right) \cap B_{\PhaseSpaceVF}(0,\delta(\TailleTronc))
\end{equation*}
where
\begin{equation*}
	 \delta(\TailleTronc) \egaldef \frac{\TailleTronc}{2\ConstantEstimateStableManifoldThm \ConstanteControleExpMatrix{\OriginDiff} \gphs{\OriginDiff}}
\end{equation*}
It follows that for every $0 < \TailleTronc \leq \tilde{\TailleTronc}$, for every $\norme{\Parameter} \leq \TailleTronc/2$ and for every $\VarPhaseSpaceStable \in \StableSpace$ such that $\norme{\VarPhaseSpaceStable + \SMTGraphMapLocal_\Parameter(\VarPhaseSpaceStable)} <  \delta(\TailleTronc)$, we have
\begin{equation*}
\SMTGraphMapLocal_\Parameter(\VarPhaseSpaceStable) = \SMTGraphMapII^\TailleTronc_\Parameter(\VarPhaseSpaceStable)
\end{equation*}
In order to obtain the estimates about $\SMTGraphMapLocal$ and its derivatives at a given point $(\VarPhaseSpaceStable,\Parameter)$, the idea is to remark that it will be the same estimates for $\SMTGraphMapII^\TailleTronc$ for some well chosen $\TailleTronc=\TailleTronc(\VarPhaseSpaceStable,\Parameter)$. Plugging~\eqref{e.controle-champ-tronc-derivee-1} into~\eqref{e-controle-phi-0}, we obtain, for every $(\VarPhaseSpaceStable,\Parameter) \in \StableSpace \times \SetParameter$,
\begin{equation*}\label{e.estimate-smtgraphmaplocal-derivee-0-first}
	\norme{\SMTGraphMapLocal(\VarPhaseSpaceStable,\Parameter)} \leq \frac{\ConstI[0]}{2^{n-1}\ConstI}  \norme{\VarPhaseSpaceStable}
\end{equation*}
It follows from the previous estimate that for every $(\VarPhaseSpaceStable,\Parameter) \in \StableSpace \times \SetParameter \setminus \{(0,0)\}$ such that
\begin{equation}\label{e.condition-on-varstablespace}
\begin{split}
\norme{\VarPhaseSpaceStable} &<  \left(\ConstantEstimateStableManifoldThm \ConstanteControleExpMatrix{\OriginDiff} \gphs{\OriginDiff}\right)^{-1} \min \left( \frac{1}{8c_1(\ConstI+\ConstI[0])2^{n-1}} \left(\gphs{\OriginDiff} \ConstanteControleExpMatrix{\OriginDiff} \NormeSupHigherDerivativesLocMax{2}{\VFParam,\TailleBoule}\right)^{-1},\frac{\TailleBoule}{8\sqrt{2}\left(1+\frac{\ConstI[0]}{2^{n-1}\ConstI}\right)} \right)  \\
\norme{\Parameter} &< \left(\ConstantEstimateStableManifoldThm \ConstanteControleExpMatrix{\OriginDiff} \gphs{\OriginDiff}\right)^{-1} \min \left( \frac{1}{8c_1\ConstI2^{n-1}} \left(\gphs{\OriginDiff} \ConstanteControleExpMatrix{\OriginDiff} \NormeSupHigherDerivativesLocMax{2}{\VFParam,\TailleBoule}\right)^{-1}, \frac{\TailleBoule}{8\sqrt{2}} \right) 
\end{split}
\end{equation}
the number
\begin{equation}\label{e.defin-taille-tronc-adaptee-point}
	\TailleTronc(\VarPhaseSpaceStable,\Parameter) \egaldef 4\ConstantEstimateStableManifoldThm \ConstanteControleExpMatrix{\OriginDiff} \gphs{\OriginDiff}\left(  \left(1+\frac{\ConstI[0]}{2^{n-1}\ConstI}\right) \norme{\VarPhaseSpaceStable} + \norme{\Parameter} \right)
\end{equation}
satisfies $0 < \TailleTronc(\VarPhaseSpaceStable,\Parameter) \leq \tilde{\TailleTronc}$ and the following property: for every $(\VarPhaseSpaceStable',\Parameter') \in \StableSpace \times \SetParameter$,
\begin{equation*}
 \left( \norme{\VarPhaseSpaceStable'} < 2 \norme{\VarPhaseSpaceStable} \quad \text{and} \quad
	\norme{\Parameter'} < 2 \norme{\Parameter} \right) \implies \left(\norme{\VarPhaseSpaceStable'+\SMTGraphMapLocal_{\Parameter'}(\VarPhaseSpaceStable')} < \delta(\TailleTronc(\VarPhaseSpaceStable,\Parameter)) \quad \text{and} \quad
	\norme{\Parameter'} \leq \TailleTronc(\VarPhaseSpaceStable,\Parameter)/2\right)
\end{equation*}
Let us now fix $(\VarPhaseSpaceStable,\Parameter) \in \StableSpace \times \SetParameter \setminus \{(0,0)\}$ satisfying~\eqref{e.condition-on-varstablespace}. According to the above reasoning, the maps $\SMTGraphMapLocal$ and $\SMTGraphMapII^{\TailleTronc(\VarPhaseSpaceStable,\Parameter)}$ coincide on $B_{\StableSpace}(0,2\norme{\VarPhaseSpaceStable}) \times B_{\SetParameter}(0,2\norme{\Parameter})$, in particular all their derivatives at the point $(\VarPhaseSpaceStable,\Parameter)$ coincide. We have
\begin{align*}
	\norme{\SMTGraphMapLocal(\VarPhaseSpaceStable,\Parameter)} &= \norme{\SMTGraphMapII^{\TailleTronc(\VarPhaseSpaceStable,\Parameter)}(\VarPhaseSpaceStable,\Parameter)} \\
	& \leq \ConstI[0] \gphs{\OriginDiff}\ConstanteControleExpMatrix{\OriginDiff} \NormeSupHigherDerivativesBounded{1}{\TroncVFParam{\TailleTronc(\VarPhaseSpaceStable,\Parameter)}} \norme{\VarPhaseSpaceStable} & \text{using~\eqref{e-controle-phi-0}}\\
	& \leq c_1\ConstI[0] \gphs{\OriginDiff}\ConstanteControleExpMatrix{\OriginDiff} \TailleTronc(\VarPhaseSpaceStable,\Parameter) \NormeSupHigherDerivativesLoc{2}{\VFParam,\TailleBoule} \norme{\VarPhaseSpaceStable} & \text{using~\eqref{e.controle-champ-tronc-derivee-1}} \\
	& \leq 4c_1 \ConstI[0] \ConstantEstimateStableManifoldThm \left(1+\frac{\ConstI[0]}{2^{n-1}\ConstI}\right) \gphs{\OriginDiff}^2 \ConstanteControleExpMatrix{\OriginDiff}^2 \NormeSupHigherDerivativesLoc{2}{\VFParam,\TailleBoule} \norme{\VarPhaseSpaceStable} (\norme{\VarPhaseSpaceStable}+\norme{\Parameter}) & \text{using~\eqref{e.defin-taille-tronc-adaptee-point}}
\end{align*}
so estimate~\eqref{e-controle-phi-0-local-estimates} holds true (for some different constants). By the same reasoning, we obtain estimates~\eqref{e-controle-phi-1-derivative-stable-variable-local-estimates} and~\eqref{e-controle-phi-1-derivative-parameter-variable-local-estimates}.
Using~\eqref{e.controle-champ-tronc-derivee-superieures}, we get, for all $k \geq 2$,
\begin{align*}
	\NormeSupHigherDerivativesBoundedMax{k+1}{\TroncVFParam{\tilde{\TailleTronc}}} &\leq \max\left(1, c_{k+1} \tilde{\TailleTronc}^{1-k} \NormeSupHigherDerivativesLoc{k+1}{\VFParam}\right) \\
	& \leq c_{k+1} \tilde{\TailleTronc}^{1-k}\NormeSupHigherDerivativesLocMax{k+1}{\VFParam,\TailleBoule} \\
\end{align*}
Using~\eqref{e.Taille-tronc-canonique}, we get, for all $k \geq 2$,
\begin{align*}
	\NormeSupHigherDerivativesBoundedMax{k+1}{\TroncVFParam{\tilde{\TailleTronc}}} 	& \leq c_{k+1} \max\left(\left(c_1\ConstI2^{n-1}\gphs{\OriginDiff} \ConstanteControleExpMatrix{\OriginDiff} \NormeSupHigherDerivativesLocMax{2}{\VFParam,\TailleBoule}\right)^{k-1},\left(\frac{\sqrt{2}}{\TailleBoule}\right)^{k-1}\right)  \NormeSupHigherDerivativesLocMax{k+1}{\VFParam,\TailleBoule}\\
	& \leq c_{k+1} \max\left(\left(c_1\ConstI2^{n-1}\right)^{k-1},\sqrt{2}^{k-1}\right)
	\max\left(\gphs{\OriginDiff} \ConstanteControleExpMatrix{\OriginDiff} \NormeSupHigherDerivativesLocMax{2}{\VFParam,\TailleBoule},\TailleBoule^{-1} \right)^{k-1}
	\NormeSupHigherDerivativesLocMax{k+1}{\VFParam,\TailleBoule}
\end{align*}
Plugging this estimate into~\eqref{e-controle-phi-2} applied to $\SMTGraphMapII^{\tilde{\TailleTronc}}=\SMTGraphMapLocal$, it follows that~\eqref{e-controle-phi-2-local-estimates} holds true. This concludes the proof of theorem~\ref{thm.variete-gamma-stable-local-version}.
\end{proof}

\section{Estimates for vector fields vanishing on submanifolds}
\label{e.SMF}
Fix $n \in \N$ and a linear subspace $\UnstableSpace$ of $\PhaseSpaceVF$. Denote by $\norme{.}$ the Euclidean norm on $\PhaseSpaceVF$. Let $\OpenSetDefinVF$ be an open neighbourhood of $0$ in $\PhaseSpaceVF$. Fix a smooth vector field $\FixedVF: \OpenSetDefinVF \to \PhaseSpaceVF$. Assume that
\begin{enumerate}
	\item \label{e.ligne-singularities} $\FixedVF$ vanishes on $\TraceOpenSetDefinVF:= \OpenSetDefinVF \cap \UnstableSpace$;
	\item \label{e.PHS-ligne-singularities} For every $\Parameter \in \TraceOpenSetDefinVF$, there exists a decomposition $\StableSpace_{\Parameter} \oplus \UnstableSpace = \R^n$ stabilized by $\OriginDiff_{\Parameter}:=\DiffPoint{\FixedVF}{\Parameter}$ and such that
	\begin{equation*}
		\SpectreUp{(\OriginDiff_{\Parameter})_{|\StableSpace_{\Parameter}}} <0
	\end{equation*}
\end{enumerate}

For every $\Parameter \in \TraceOpenSetDefinVF$, let
\begin{equation}
	\EVII{\Parameter} \egaldef \min\left(1,\abs{\SpectreUp{(\OriginDiff_{\Parameter})_{|\StableSpace_{\Parameter}}}}\right)^{\DimPhaseSpace-1} \quad \text{(see~\eqref{e.max-spectre})}
\end{equation}

Let $\SMF$ be the stable foliation associated with the contracted subspace $\UnstableSpace$ on which the vector field $\FixedVF$ vanishes, that is, the partition
\begin{equation*}
\SMF \egaldef \MyBigSet{\SM{\Parameter,\FixedVF}}{\Parameter \in \TraceOpenSetDefinVF}
\end{equation*}
where the stable manifolds $\SM{\Parameter,\FixedVF}$ are called the leaves of the foliation $\SMF$.

For every integer $k \geq 2$, every $\Parameter \in \TraceOpenSetDefinVF$ and every $\TailleBoule >0$ such that $\overline{B}_{\PhaseSpaceVF}(\Parameter,\TailleBoule) \subset \OpenSetDefinVF$, let
\begin{equation*}
\NormeSupHigherDerivativesLoc{k}{\FixedVF,\Parameter,\TailleBoule} \egaldef  \sup_{2 \leq j \leq k} \sup_{y \in \overline{B}_{\PhaseSpaceVF}(\Parameter,\TailleBoule)} \normesub{\DiffPointSup{\FixedVF}{y}{j}}
\end{equation*}
and let
\begin{equation*}
\NormeSupHigherDerivativesLocMax{k}{\FixedVF,\Parameter,\TailleBoule} \egaldef \max\left(1, \NormeSupHigherDerivativesLoc{k}{\FixedVF,\Parameter,\TailleBoule}\right)
\end{equation*}

The next theorem states that in this context, the foliation $\SMF$ can be locally smoothly straightened in the neighbourhood of any point $\Parameter \in \TraceOpenSetDefinVF$.

\begin{figure}
	\begin{center}
		\begin{tikzpicture}[scale=2]	
		\begin{scope}[xshift=100]
		\begin{scope}[yshift=12.5]
		\draw (0,0) node {$\bullet$};
		\draw (0,0) node[right] {$\Parameter_0$};
		\end{scope}	
		\begin{scope}[rotate=25]
		\draw[dashed] (-1,0) -- (1,0);
		\end{scope}
		\begin{scope}[yshift=25,rotate=25]
		\draw[dashed] (-1,0) -- (1,0);
		\end{scope}
		\begin{scope}[xshift=25,yshift=11.5]
		\draw[dashed] (0,0) -- (0,0.88);
		\end{scope}
		\begin{scope}[xshift=-26,yshift=-12.3]
		\draw[dashed] (0,0) -- (0,0.88);
		\end{scope}
		\draw (-0.8,0.7) node {$V_{\Parameter_0}$};
		\begin{scope}[yshift=5,rotate=25]
		\draw[purple] (-1,0) -- (1,0);
		\draw (0,0) node {$\bullet$};
		\draw (0,0) node[right] {$\Parameter$};
		\end{scope}
		\begin{scope}[xshift=-23,yshift=-10]
		\draw[purple] (0,0) node[right] {($\Parameter + \StableSpace_{\Parameter_0}) \cap V_{\Parameter_0}$};
		\end{scope}
		\begin{scope}[yshift=10,rotate=25]
		\draw[blue] (-1,0) -- (1,0);
		\end{scope}
		\begin{scope}[yshift=15,rotate=25]
		\draw[cyan] (-1,0) -- (1,0);
		\end{scope}
		\begin{scope}[yshift=20,rotate=25]
		\draw[orange] (-1,0) -- (1,0);
		\end{scope}
		\draw (0,-0.25) -- (0,1);
		\draw (0,1) node[above] {$\UnstableSpace$};
		\end{scope}		
		
		\begin{scope}[xshift=50,yshift=15]
		\draw (0,0.25) node {$\SLC_{\Parameter_0}$};
		\draw[->] (-0.5,0) to[bend left] (0.5,0);
		\end{scope}
		
		\begin{scope}[yshift=12.5]
		\draw (0,0) node {$\bullet$};
		\draw (0,0) node[right] {$\Parameter_0$};
		\end{scope}
		\begin{scope}[rotate=25]
		\draw[dashed] [domain=-1.2:0.865] plot (\x,{2/3*cos(\x r)-2/3});
		\end{scope}
		\begin{scope}[yshift=25,rotate=25]
		\draw[dashed] [domain=-1.055:0.92] plot (\x,{1/4*cos(\x r)-1/4});
		\end{scope}
		\begin{scope}[xshift=25,yshift=5]
		\draw[dashed] (0,0) -- (0,1);
		\end{scope}
		\begin{scope}[xshift=-26,yshift=-25]
		\draw[dashed] (0,0) -- (0,1.2);
		\end{scope}
		\draw (-0.8,0.7) node {$U_{\Parameter_0}$};
		\begin{scope}[yshift=5,rotate=25]
		\draw[purple] [domain=-1.2:0.865] plot (\x,{2/3*cos(\x r)-2/3});
		\draw (0,0) node {$\bullet$};
		\draw (0,0) node[right] {$\Parameter$};
		\end{scope}
		\begin{scope}[xshift=-20,yshift=-16]
		\draw[purple] (0,0) node[right] {$\SM{\Parameter,\FixedVF} \cap U_{\Parameter_0}$};
		\end{scope}
		\begin{scope}[yshift=10,rotate=25]
		\draw[blue] [domain=-1.13:0.875] plot (\x,{1/2*cos(\x r)-1/2});
		\end{scope}
		\begin{scope}[yshift=15,rotate=25]
		\draw[cyan] [domain=-1.08:0.9] plot (\x,{1/3*cos(\x r)-1/3});
		\end{scope}
		\begin{scope}[yshift=20,rotate=25]
		\draw[orange] [domain=-1.055:0.92] plot (\x,{1/4*cos(\x r)-1/4});
		\end{scope}
		\draw (0,-0.25) -- (0,1);
		\draw (0,1) node[above] {$\UnstableSpace$};
		\end{tikzpicture}
	\end{center}
	\caption{The local coordinate system $\SLC_{\Parameter_0}$ straightens the stable foliation induced by $\SMF$ on $U_{\Parameter_0}$.}
	\label{fig.param-locale-adaptée}
\end{figure}

\begin{thm}[Local straightening of the stable foliation of a vector field]\label{thm.redressement-local-feuilletage}
	There exists two positive constants $\ConstIII \geq \ConstIII' \geq 1$, a sequence of positive constants $(\ConstIII[k])_{k\geq 2}$ and a sequence of integers $(N_k)_{k \geq 2}$ (all independant of $\FixedVF$) such that for every map $\TailleBoule: \TraceOpenSetDefinVF \to \left(0,1\right]$ satisfying
	\begin{equation*}
		\forall \Parameter \in \TraceOpenSetDefinVF, \; \overline{B}_{\PhaseSpaceVF}(\Parameter,\TailleBoule(\Parameter)) \subset \OpenSetDefinVF
	\end{equation*}
	there exists
	\begin{itemize}
		\item two families $(U_{\Parameter})_{\Parameter \in \TraceOpenSetDefinVF}$ and $(V_{\Parameter})_{\Parameter \in \TraceOpenSetDefinVF}$ of open sets of $\PhaseSpaceVF$;
		\item a family of smooth diffeomorphisms
		\begin{equation*}
		(\SLC_{\Parameter}: U_{\Parameter} \to V_{\Parameter})_{\Parameter \in \TraceOpenSetDefinVF}
		\end{equation*}
	\end{itemize}
satisfying the following properties. Given $\Parameter_0 \in \TraceOpenSetDefinVF$:
	\begin{enumerate}
		\item\label{thm.size-neighbourhoods} Both $U_{\Parameter_0}$ and $V_{\Parameter_0}$ are neighbourhoods of $\Parameter_0$. More precisely, they both contain the open ball $B_{\PhaseSpaceVF}\left(\Parameter_0, R_{\Parameter_0}\right)$ where
		\begin{equation}\label{e.definition-rayon-minimal}
		R_{\Parameter_0} \egaldef
		\frac{\EVII{\Parameter_0}}{\ConstIII \ConstanteInegTriangInverse{\StableSpace_{\Parameter_0},\UnstableSpace}^2 \ConstanteControleExpMatrix{\OriginDiff_{\Parameter_0}}} \min\left( \frac{\EVII{\Parameter_0}}{ \ConstanteControleExpMatrix{\OriginDiff_{\Parameter_0}} \NormeSupHigherDerivativesLocMax{2}{\FixedVF,\Parameter_0,\TailleBoule(\Parameter_0)}}, \TailleBoule(\Parameter_0) \right)  \quad \text{(see~\eqref{e.ConstanteControleExpMatrixNorme})}
		\end{equation}
				
		\item\label{thm.straightening} $\SMF$ foliates $U_{\Parameter_0}$ and $\SLC_{\Parameter_0}$ is a local coordinate system straightening the stable foliation $\SMF$ (see figure~\ref{fig.param-locale-adaptée}). More precisely,
		\begin{equation*}
		U_{\Parameter_0}  = \bigsqcup_{\Parameter \in \TraceOpenSetDefinVF \cap U_{\Parameter_0}} \SM{\Parameter,\FixedVF} \cap U_{\Parameter_0}
		\end{equation*}
		and, for every $\Parameter \in \TraceOpenSetDefinVF \cap U_{\Parameter_0}$,
		\begin{equation*}
		\SLC_{\Parameter_0}\left( \SM{\Parameter,\FixedVF} \cap U_{\Parameter_0} \right) = \left(\Parameter + \StableSpace_{\Parameter_0}\right) \cap V_{\Parameter_0}
		\end{equation*} 
		Moreover, for every $\Parameter \in \TraceOpenSetDefinVF \cap U_{\Parameter_0}$,
		\begin{equation*}
		\SM{\Parameter,\FixedVF} \cap U_{\Parameter_0} = 	\LSMexp{\Parameter,\FixedVF}{\ExpDecaySpeed}{\eta} \cap   U_{\Parameter_0}
		\end{equation*}
		where
		\begin{align*}
		\ExpDecaySpeed &= -\frac{\abs{\SpectreUp{(\OriginDiff_{\Parameter_0})_{|\StableSpace_{\Parameter_0}}}}}{2} \\
		\eta &= \frac{1}{\ConstIII'} \min\left( \frac{\EVII{\Parameter_0}}{ \ConstanteControleExpMatrix{\OriginDiff_{\Parameter_0}} \NormeSupHigherDerivativesLocMax{2}{\FixedVF,\Parameter_0,\TailleBoule(\Parameter_0)}}, \TailleBoule(\Parameter_0) \right)
		\end{align*}
		
		\item\label{thm.local-coord-form} Identifying $\PhaseSpaceVF$ and $\StableSpace_{\Parameter_0} \times \UnstableSpace$, the local coordinate system has the following form:
		\begin{equation*}\label{e.form-local-coord-system}
		\SLC_{\Parameter_0}(z,\Parameter) = (z,\Parameter) +  \left(0,\SLCsmall_{\Parameter_0}(z,\Parameter)\right)
		\end{equation*}
		where $\SLCsmall_{\Parameter_0}(0,\Parameter)=0$.
		
		\item\label{thm.close-Id} For every $0<\epsilon \leq 1$, $\SLC_{\Parameter_0}$ restricted to $B_{\PhaseSpaceVF}\left(\Parameter_0, \epsilon R_{\Parameter_0}\right)$ is $\epsilon$-close to the identity with respect to the $C^1$-norm:
		\begin{equation*}
		\begin{split}
		\norme{\SLC_{\Parameter_0} - \Id}_{C^1} &\leq \epsilon \quad \text{in restriction to} \:\: B_{\PhaseSpaceVF}\left(\Parameter_0, \epsilon R_{\Parameter_0}\right) \\
		\norme{\SLC_{\Parameter_0}^{-1} - \Id}_{C^1} &\leq \epsilon \quad \text{in restriction to} \:\: B_{\PhaseSpaceVF}\left(\Parameter_0, \epsilon R_{\Parameter_0}\right)
		\end{split}
		\end{equation*}
		
		\item\label{thm.Ck-norms} The $C^k$-norms have a sub-polynomial growth with respect to $\EVII{\Parameter_0}^{-1}$: more precisely, for every $k \geq 2$,
		\begin{equation*}
		\norme{\SLC_{\Parameter_0}}_{C^k}, \norme{\SLC_{\Parameter_0}^{-1}}_{C^k}	\leq \ConstIII[k] \left( \frac{\ConstanteControleExpMatrix{\OriginDiff_{\Parameter_0}} \NormeSupHigherDerivativesLocMax{k+1}{\FixedVF,\Parameter_0,\TailleBoule(\Parameter_0)}}{\EVII{\Parameter_0} \TailleBoule(\Parameter_0)} \right)^{N_k}
		\end{equation*}
		
		\item\label{thm.local-unicity} For every $\Parameter_1 \in \TraceOpenSetDefinVF$, $\SLC_{\Parameter_0}$ and $\SLC_{\Parameter_1}$ "coincide" on $U_{\Parameter_0} \cap U_{\Parameter_1}$ modulo the choice of the direction on which the stable manifolds are projected. More precisely, if we denote by $\pi_\Parameter$ the linear projection along $\UnstableSpace$ onto $\StableSpace_{\Parameter}$ for every $\Parameter \in \TraceOpenSetDefinVF$, we have
		\begin{equation*}
			\SLC_{\Parameter_0}- \SLC_{\Parameter_1} = \pi_{\Parameter_0} - \pi_{\Parameter_1} \quad \text{in restriction to $U_{\Parameter_0} \cap U_{\Parameter_1}$}
		\end{equation*}
	\end{enumerate}
\end{thm}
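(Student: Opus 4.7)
The strategy is to deduce the theorem from theorem~\ref{thm.variete-gamma-stable-local-version} by viewing the points of $\TraceOpenSetDefinVF$ near $\Parameter_0$ as the parameter of a smooth family of vector fields, then to construct $\SLC_{\Parameter_0}$ by explicitly inverting the map that parameterises the stable leaves over $\StableSpace_{\Parameter_0}$.

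Fix $\Parameter_0 \in \TraceOpenSetDefinVF$ and, up to a translation, assume $\Parameter_0 = 0$. Using the splitting $\PhaseSpaceVF = \StableSpace_{\Parameter_0} \oplus \UnstableSpace$, identify $\PhaseSpaceVF$ with $\StableSpace_{\Parameter_0} \times \UnstableSpace$ via the sum map and define the smooth family
\begin{equation*}
	\VFParam_\Parameter(z) \egaldef \FixedVF(z + \Parameter),
\end{equation*}
on a neighbourhood of $(0,0)$, extended to $\PhaseSpaceVF \times \UnstableSpace$ by multiplying the non-linear part by a plateau function supported in $\overline{B}(0, \TailleBoule(\Parameter_0))$ (so derivatives of the extended $\VFParam$ on the truncation ball are controlled by $\NormeSupHigherDerivativesLocMax{k}{\FixedVF,\Parameter_0,\TailleBoule(\Parameter_0)}$). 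Since $\FixedVF$ vanishes on $\TraceOpenSetDefinVF$, $\VFParam_\Parameter(0) = 0$ for every sufficiently small $\Parameter$, so hypothesis~\ref{hypo.singularity} holds, and $\DiffPartielPoint{\VFParam}{0,0}{z} = \OriginDiff_{\Parameter_0}$ admits $(\StableSpace_{\Parameter_0}, \UnstableSpace)$ as a partially hyperbolic splitting (hypothesis~\ref{hypo.PHS}), the eigenvalues on the "unstable" factor $\UnstableSpace$ being in fact zero because $\OriginDiff_\Parameter$ vanishes on $\UnstableSpace$ for $\Parameter \in \TraceOpenSetDefinVF$. Apply theorem~\ref{thm.variete-gamma-stable-local-version} with $\ExpDecaySpeed = -|\SpectreUp{(\OriginDiff_{\Parameter_0})_{|\StableSpace_{\Parameter_0}}}|/2$ to obtain the smooth map $\SMTGraphMapLocal: \StableSpace_{\Parameter_0} \times \UnstableSpace \to \UnstableSpace$ together with its quantitative controls. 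Since $0$ is a singularity of every $\VFParam_\Parameter$, one has $\SMTGraphMapLocal_\Parameter(0) = 0$ for every small $\Parameter$, and together with the standard $\DiffPoint{\SMTGraphMapLocal_0}{0} = 0$ this gives $\DiffPoint{\SMTGraphMapLocal}{0,0} = 0$.

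Translating back, the stable leaf of $\FixedVF$ through $\Parameter$ is locally $\{\Parameter + z + \SMTGraphMapLocal_\Parameter(z) : z \in \StableSpace_{\Parameter_0}\ \text{small}\}$. Define the explicit parameterisation
\begin{equation*}
	\LocalCoorPathMapGeneric(z,\Parameter) \egaldef (z,\, \Parameter + \SMTGraphMapLocal_\Parameter(z))
\end{equation*}
so that each slice $\{\Parameter = \text{const}\}$ is mapped onto (a portion of) the stable leaf through $\Parameter$. Since $\DiffPoint{\LocalCoorPathMapGeneric}{0,0} = \Id$, estimates~\eqref{e-controle-phi-1-derivative-stable-variable-local-estimates}--\eqref{e-controle-phi-1-derivative-parameter-variable-local-estimates} imply that $\LocalCoorPathMapGeneric$ is $\epsilon$-close to the identity in $C^1$-norm on a ball of radius proportional to $\epsilon R_{\Parameter_0}$, where the form of $R_{\Parameter_0}$ in~\eqref{e.definition-rayon-minimal} reflects the conjugation factor $\ConstanteInegTriangInverse{\StableSpace_{\Parameter_0},\UnstableSpace}^2$ absorbed when passing from the orthogonal splitting used in theorem~\ref{thm.variete-gamma-stable-local-version} to the natural splitting $(\StableSpace_{\Parameter_0},\UnstableSpace)$. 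A quantitative inverse function theorem then yields a smooth inverse $\SLC_{\Parameter_0} \egaldef \LocalCoorPathMapGeneric^{-1}$ on an open set containing $B(\Parameter_0, R_{\Parameter_0})$. By construction $\SLC_{\Parameter_0}$ straightens the foliation (item~\ref{thm.straightening}), has the required form (item~\ref{thm.local-coord-form}, since $\SMTGraphMapLocal_\Parameter(0)=0$ forces $\SLCsmall_{\Parameter_0}(0,\Parameter) = 0$) and satisfies the $C^1$-closeness statement (item~\ref{thm.close-Id}). The identification $\SM{\Parameter,\FixedVF}\cap U_{\Parameter_0} = \LSMexp{\Parameter,\FixedVF}{\ExpDecaySpeed}{\eta}\cap U_{\Parameter_0}$ follows from item~\ref{item.uniqueness} of theorem~\ref{thm.variete-gamma-stable-local-version}, and item~\ref{thm.local-unicity} is immediate from the intrinsic nature of the stable leaves: for $x \in \SM{\Parameter,\FixedVF}\cap U_{\Parameter_0}\cap U_{\Parameter_1}$ one has $\SLC_{\Parameter_i}(x) = \Parameter + \pi_{\Parameter_i}(x)$ (using $\Parameter \in \Ker \pi_{\Parameter_i}$), and subtracting yields the claimed identity.

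The main obstacle is item~\ref{thm.Ck-norms}: the polynomial $C^k$-bounds on $\SLC_{\Parameter_0}^{-1} = \LocalCoorPathMapGeneric$ follow directly from~\eqref{e-controle-phi-2-local-estimates} (since $\LocalCoorPathMapGeneric$ is an explicit polynomial expression in $\SMTGraphMapLocal$ and its partials), but those for $\SLC_{\Parameter_0}$ itself require differentiating the identity $\SLC_{\Parameter_0} \circ \LocalCoorPathMapGeneric = \Id$ and applying Faà di Bruno's formula. One then proves by induction on $k$ that $\|\DiffSup{\SLC_{\Parameter_0}}{k}\|$ is a polynomial in the quantities $\|\DiffSup{\LocalCoorPathMapGeneric}{j}\|$ ($j \leq k$) and $\|(\Diff\LocalCoorPathMapGeneric)^{-1}\|$, the latter being uniformly bounded on $B(0, R_{\Parameter_0})$ by the $C^1$-closeness to $\Id$. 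Substituting the already-obtained estimates on $\LocalCoorPathMapGeneric$ yields a polynomial bound of the stated form, with some integer exponent $N_k$ depending only on $k$ and $\DimPhaseSpace$.
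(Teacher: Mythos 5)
Your overall strategy matches the paper's: translate $\Parameter_0$ to the origin, treat $\Parameter\in\UnstableSpace$ as the parameter of the family $\VFParam_\Parameter(x)=\FixedVF(\Parameter+x)$, extend by a plateau map, apply theorem~\ref{thm.variete-gamma-stable-local-version}, and invert the graph-parameterisation $\psi(z,\Parameter)=(z,\Parameter+\SMTGraphMapLocal_\Parameter(z))$ via the quantitative inverse function theorem and a Fa\`a di Bruno induction for the $C^k$-bounds. These are all done in the paper exactly as you sketch, and your handling of items~\eqref{thm.local-coord-form}, \eqref{thm.close-Id}, \eqref{thm.Ck-norms} and~\eqref{thm.local-unicity} is correct.

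There is one genuine gap. You assert that the identification $\SM{\Parameter,\FixedVF}\cap U_{\Parameter_0} = \LSMexp{\Parameter,\FixedVF}{\ExpDecaySpeed}{\eta}\cap U_{\Parameter_0}$ ``follows from item~\eqref{item.uniqueness} of theorem~\ref{thm.variete-gamma-stable-local-version}.'' That item only asserts $\SMexp{0,\VFParam_\Parameter}{\ExpDecaySpeed}=\SMexp{0,\VFParam_\Parameter}{\ExpDecaySpeed'}$ for two admissible rates $\ExpDecaySpeed,\ExpDecaySpeed'$; it says nothing about the unconditional stable set $\SM{\Parameter,\FixedVF}$, and the remark after theorem~\ref{thm.variete-gamma-stable-local-version} identifying $\SMexp$ with $\SM$ only applies when the singularity is hyperbolic — which is precisely \emph{not} the case here, since $(\OriginDiff_{\Parameter_0})_{|\UnstableSpace}=0$. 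The equality $\SM{\Parameter,\FixedVF}\cap U_{\Parameter_0} = \LSMexp{\Parameter,\FixedVF}{\ExpDecaySpeed}{\eta}\cap U_{\Parameter_0}$ is exactly what is needed to pass from ``the local $\ExpDecaySpeed$-stable sets foliate $U_{\Parameter_0}$'' (which your construction gives) to ``$\SMF$ foliates $U_{\Parameter_0}$'' (which is what item~\eqref{thm.straightening} asks for, $\SMF$ being defined via the global sets $\SM{\Parameter,\FixedVF}$), so leaving it unjustified makes your proof of item~\eqref{thm.straightening} circular. The correct argument, which the paper supplies, is purely topological: the injectivity of $\psi$ and equation~\eqref{e.corollary-graph-local-structure} show that the local $\ExpDecaySpeed$-stable sets $\left(\LSMexp{\Parameter,\FixedVF}{\ExpDecaySpeed}{\tilde\eta}\cap U_{\Parameter_0}\right)_{\Parameter}$ give a \emph{partition} of $U_{\Parameter_0}$; each such piece is contained in $\SM{\Parameter,\FixedVF}$, and since the global stable sets $\left(\SM{\Parameter,\FixedVF}\right)_{\Parameter}$ are themselves pairwise disjoint, no other piece of the partition can lie inside $\SM{\Parameter,\FixedVF}$, whence $\SM{\Parameter,\FixedVF}\cap U_{\Parameter_0}=\LSMexp{\Parameter,\FixedVF}{\ExpDecaySpeed}{\tilde\eta}\cap U_{\Parameter_0}$. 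You should insert this disjointness argument in place of the appeal to item~\eqref{item.uniqueness}.
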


\begin{rem}
	The charts $(\SLC_{\Parameter_0})_{\Parameter_0 \in \TraceOpenSetDefinVF}$ do not form a foliation coordinate atlas because $\SLC_{\Parameter_0}$ straightens the leaf $\SM{\Parameter,\FixedVF} \cap U_{\Parameter_0}$ onto the affine subspace $\Parameter + \StableSpace_{\Parameter_0}$ which depends on $\Parameter_0$. Nevertheless, identifying $\PhaseSpaceVF$ and $\StableSpace_{\Parameter_0} \times \UnstableSpace$, one only needs to compose $\SLC_{\Parameter_0}$ with $(\pi_{|\StableSpace_{\Parameter_0}},\Id_{\UnstableSpace})$ where $\pi$ denotes a linear projection along $\UnstableSpace$ onto a fixed complement of $\UnstableSpace$ (for example $\UnstableSpace^{\perp}$) to obtain a foliation coordinate atlas. This would change the estimates on the norms of the derivatives of $\SLC_{\Parameter_0}^{-1}$ by a factor $\ConstanteInegTriangInverse{\StableSpace_{\Parameter_0},\UnstableSpace}$ and would make $\SLC_{\Parameter_0}$ close to $(\pi_{|\StableSpace_{\Parameter_0}},\Id_{\UnstableSpace})$ in item~\eqref{thm.close-Id}. We did not make this choice for two reasons: there is no canonical complement of $\UnstableSpace$ and we want to obtain the fact that $\SLC_{\Parameter_0}$ can be made arbitrarily close to $\Id$ with respect to the $C^1$-norm.
\end{rem}

\begin{proof}
\proofstep{Presentation of the proof as a consequence of theorem~\ref{thm.variete-gamma-stable-local-version}}
Fix a map $\TailleBoule: \TraceOpenSetDefinVF \to \left(0,1\right]$ satisfying
\begin{equation*}
\forall \Parameter \in \TraceOpenSetDefinVF, \; \overline{B}(\Parameter,\TailleBoule(\Parameter)) \subset \OpenSetDefinVF
\end{equation*}
Fix $\Parameter_0 \in \TraceOpenSetDefinVF$. Even if it means translating the vector field $\FixedVF$, one can assume that $\Parameter_0 = 0$. We will then prove the desired result in the neighbourhood of $0$. 
Recall that we want to straighten, for all $\Parameter \in \UnstableSpace$ small enough, the local stable manifold $\LSM{\Parameter,\FixedVF}{\eta}$ for some $\eta$ depending on $\Parameter$. This leads us to define, for every $\Parameter \in \UnstableSpace$ and $\VariablePhaseSpace \in \PhaseSpaceVF$ such that $\Parameter + \VariablePhaseSpace \in \OpenSetDefinVF$,
\begin{equation*}
\VFParam(\VariablePhaseSpace,\Parameter) \egaldef \VFParam_\Parameter (\VariablePhaseSpace) = \FixedVF( \Parameter + \VariablePhaseSpace)
\end{equation*}
We will prove later on that the local stable manifolds of $\FixedVF$ coincide with the local $\ExpDecaySpeed$-stable manifolds for some $\ExpDecaySpeed <0$ well chosen (see~\eqref{e.choice-gamma}). We will then focus on describing those local $\ExpDecaySpeed$-stable manifolds.
The local $\ExpDecaySpeed$-stable manifold of $\Parameter \in \TraceOpenSetDefinVF$ for $\FixedVF$ is exactly the translation of the local $\ExpDecaySpeed$-stable manifold of $0$ for $\VFParam_\Parameter$ by $t_{\Parameter}:\VariablePhaseSpace \mapsto \Parameter+\VariablePhaseSpace$. More precisely, for every $\Parameter \in \TraceOpenSetDefinVF$ and for every $0<\delta \leq \TailleBoule(\Parameter)$, we have
\begin{equation}\label{e.stable-manifold-locally-coincide-1}
\Parameter+\LSMexp{0,\VFParam_\Parameter}{\ExpDecaySpeed}{\delta}= \LSMexp{\Parameter,\FixedVF}{\ExpDecaySpeed}{\delta}
\end{equation}
According to the above equation, we want to straighten the local $\ExpDecaySpeed$-stable manifolds $\LSMexp{0,\VFParam_\Parameter}{\ExpDecaySpeed}{\delta}$ for $\Parameter$ small enough.
\proofstep{Construction of $\SLC_{\Parameter_0}$}
We are now going to extend $\VFParam$ so that we can apply theorem~\ref{thm.variete-gamma-stable-local-version}. One can remark that $\VFParam$ is well defined on a neighbourhood of the closed ball $\overline{B}_{\PhaseSpaceVF \times \UnstableSpace}((0,0),\TailleBoule(\Parameter_0)/2)$. Multiplying $\VFParam$ by a smooth plateau map equal to $1$ on $\overline{B}_{\PhaseSpaceVF \times \UnstableSpace}((0,0),\TailleBoule(\Parameter_0)/2)$ and vanishing outside of a small neighbourhood of $\overline{B}_{\PhaseSpaceVF \times \UnstableSpace}((0,0),\TailleBoule(\Parameter_0)/2)$, we obtain a smooth family of vector fields (as defined in section~\ref{section.stable-manifold-theorem-notations-setup}) defined on $\PhaseSpaceVF \times \UnstableSpace$, still denoted by $\VFParam$. With this new smooth family of vector fields, equation~\eqref{e.stable-manifold-locally-coincide-1} implies: for every $\Parameter \in \UnstableSpace$ such that $\norme{\Parameter} \leq \TailleBoule(\Parameter_0)/4$ and for every $0<\delta \leq \TailleBoule(\Parameter_0)/4$, we have
\begin{equation}\label{e.stable-manifold-locally-coincide-2}
\Parameter+\LSMexp{0,\VFParam_\Parameter}{\ExpDecaySpeed}{\delta}= \LSMexp{\Parameter,\FixedVF}{\ExpDecaySpeed}{\delta}
\end{equation}
By hypothesis~\eqref{e.ligne-singularities} on $\FixedVF$, for every $\Parameter \in \UnstableSpace$, $\VFParam_\Parameter(0,0) =(0,0)$. By hypothesis~\eqref{e.PHS-ligne-singularities} on $\FixedVF$, $(\StableSpace_{\Parameter_0},\UnstableSpace)$ is a partially hyperbolic splitting of $\OriginDiff_{\Parameter_0}=\DiffPartielPoint{\VFParam}{0,0,0}{z,v}$ so $\VFParam:=(\VFParam_\Parameter)_{\Parameter \in \UnstableSpace}$ is a smooth family of vector fields satisfying the hypotheses~\ref{hypo.singularity} and~\ref{hypo.PHS}. Using the estimate
\begin{equation*}
\forall k \geq 2, \; \NormeSupHigherDerivativesLocMax{k}{\VFParam,\TailleBoule(\Parameter_0)/2} \leq \NormeSupHigherDerivativesLocMax{k}{\FixedVF,\Parameter_0,\TailleBoule(\Parameter_0)} \quad \text{(see~\eqref{e.norme-max-cas-local})}
\end{equation*}
it follows from theorem~\ref{thm.variete-gamma-stable-local-version} applied to $(\VFParam,\StableSpace_{\Parameter_0},\UnstableSpace)$ with $\TailleBoule = \TailleBoule(\Parameter_0)/2$ that there exists a smooth map
	\begin{equation*}
	\fonction{\SMTGraphMapLocal}{\StableSpace_{\Parameter_0} \times \UnstableSpace}{\UnstableSpace}{(\VarPhaseSpaceStable,\Parameter)}{\SMTGraphMapLocal_\Parameter(\VarPhaseSpaceStable)}
	\end{equation*}
such that for every $\Parameter \in B_{\UnstableSpace}\left(0,\tilde{\eta} \right)$, for every $0< \eta \leq \tilde{\eta}$ and for every $0 < \delta \leq \frac{\eta \EVII{\Parameter_0}}{\ConstantEstimateStableManifoldThm \ConstanteControleExpMatrix{\OriginDiff_{\Parameter_0}}}$, we have
	\begin{equation}\label{e.corollary-graph-local-structure}
	\LSMexp{0,\VFParam_\Parameter}{\ExpDecaySpeed}{\eta} \cap B_{\PhaseSpaceVF}(0,\delta) = \Graph \left(\SMTGraphMapLocal_\Parameter\right) \cap B_{\PhaseSpaceVF}(0,\delta)
	\end{equation}
where
\begin{align}
\label{e.choice-gamma}
	\ExpDecaySpeed & \egaldef -\frac{\abs{\SpectreUp{(\OriginDiff_{\Parameter_0})_{|\StableSpace_{\Parameter_0}}}}}{2} \\
	\tilde{\eta} & \egaldef \frac{1}{4\ConstII} \min\left( \frac{\EVII{\Parameter_0}}{ \ConstanteControleExpMatrix{\OriginDiff_{\Parameter_0}} \NormeSupHigherDerivativesLocMax{2}{\FixedVF,\Parameter_0,\TailleBoule(\Parameter_0)}}, \TailleBoule(\Parameter_0) \right) \quad \text{(see~\eqref{e.ConstanteControleExpMatrixNorme})}
\end{align}
Moreover, for every $(\VarPhaseSpaceStable,\Parameter) \in B_{\StableSpace_{\Parameter_0}}\left(0,\tilde{\delta}\right) \times B_{\UnstableSpace}\left(0,\tilde{\delta}\right)$,
	\begin{subequations}
		\label{e.estimates-SMT-graph-local-estimates-corollary}
		\begin{align}
		\label{e-controle-phi-0-local-estimates-corollary}
		\norme{\SMTGraphMapLocal(\VarPhaseSpaceStable,\Parameter)} &\leq\ConstII[0] \EVII{\Parameter_0}^{-2} \ConstanteControleExpMatrix{\OriginDiff_{\Parameter_0}}^2 \NormeSupHigherDerivativesLocMax{2}{\FixedVF,\Parameter_0,\TailleBoule(\Parameter_0)} (\norme{\VarPhaseSpaceStable}+\norme{\Parameter}) \norme{\VarPhaseSpaceStable} \\
		\label{e-controle-phi-1-derivative-stable-variable-local-estimates-corollary}
		\normesub{\DiffPartielPoint{\SMTGraphMapLocal}{\VarPhaseSpaceStable,\Parameter}{z}} &\leq\ConstII[1]
		\EVII{\Parameter_0}^{-2} \ConstanteControleExpMatrix{\OriginDiff_{\Parameter_0}}^2 \NormeSupHigherDerivativesLocMax{2}{\FixedVF,\Parameter_0,\TailleBoule(\Parameter_0)} (\norme{\VarPhaseSpaceStable}+\norme{\Parameter})\\
		\label{e-controle-phi-1-derivative-parameter-variable-local-estimates-corollary}
		\normesub{\DiffPartielPoint{\SMTGraphMapLocal}{\VarPhaseSpaceStable,\Parameter}{\Parameter}} &\leq	\ConstII[1]
		\EVII{\Parameter_0}^{-1} \ConstanteControleExpMatrix{\OriginDiff_{\Parameter_0}} \NormeSupHigherDerivativesLocMax{2}{\FixedVF,\Parameter_0,\TailleBoule(\Parameter_0)} \norme{\VarPhaseSpaceStable}
		\end{align}
where
\begin{equation*}
	\tilde{\delta} \egaldef \frac{\EVII{\Parameter_0}}{4\ConstII\ConstanteControleExpMatrix{\OriginDiff_{\Parameter_0}}} \min\left( \frac{\EVII{\Parameter_0}}{ \ConstanteControleExpMatrix{\OriginDiff_{\Parameter_0}} \NormeSupHigherDerivativesLocMax{2}{\FixedVF,\Parameter_0,\TailleBoule(\Parameter_0)}}, \TailleBoule(\Parameter_0) \right)
\end{equation*}
		and more generally, using the norm $\norme{(\VarPhaseSpaceStable,\Parameter)}=\norme{\VarPhaseSpaceStable}+\norme{\Parameter}$ on $\StableSpace_{\Parameter_0} \times \SetParameter$, we have, for all $k \geq 2$,
		\begin{multline}\label{e-controle-phi-2-local-estimates-corollary}
		\normesub{\DiffPointSup{\SMTGraphMapLocal}{\VarPhaseSpaceStable,\Parameter}{k}} \leq
		\ConstII[k] \left[ \left(\frac{\ConstanteControleExpMatrix{\OriginDiff_{\Parameter_0}}}{\EVII{\Parameter_0}}\right)^2 \right. \\
		\left. \max\left(\frac{\ConstanteControleExpMatrix{\OriginDiff_{\Parameter_0}} \NormeSupHigherDerivativesLocMax{2}{\FixedVF,\Parameter_0,\TailleBoule(\Parameter_0)}}{\EVII{\Parameter_0}}, \frac{2}{\TailleBoule(\Parameter_0)} \right)^{k-1} \NormeSupHigherDerivativesLocMax{k+1}{\FixedVF,\Parameter_0,\TailleBoule(\Parameter_0)}\right]^{2k-1}
		\end{multline}
	\end{subequations}

Let
\begin{equation}\label{e.defin-psi}
\fonction{\psi}{\StableSpace_{\Parameter_0} \times \UnstableSpace}{\StableSpace_{\Parameter_0} \times \UnstableSpace}{(z,\Parameter)}{(z,\Parameter + \phi(z,\Parameter))}
\end{equation}
One can rewrite~\eqref{e.defin-psi} as $\psi = \Id + h$ where $h(z,\Parameter) = (0,\phi(z,\Parameter))$. According to~\eqref{e-controle-phi-0-local-estimates-corollary}, \eqref{e-controle-phi-1-derivative-stable-variable-local-estimates-corollary}, \eqref{e-controle-phi-1-derivative-parameter-variable-local-estimates-corollary} and the mean value theorem, there exists a constant $K \geq 1$ (independant of $\FixedVF$, $\TailleBoule$ and $\Parameter_0$) such that for every $0< \epsilon \leq 1$ and for every $(\VarPhaseSpaceStable,\Parameter) \in B_{\StableSpace_{\Parameter_0}}\left(0,\frac{\epsilon \tilde{\delta}}{K}\right) \times B_{\UnstableSpace}\left(0,\frac{\epsilon \tilde{\delta}}{K}\right)$, we have
\begin{equation}\label{e.estimate-diff-h} 
\normesub{\DiffPoint{h}{z,\Parameter}} \leq \normesub{\DiffPoint{\phi}{z,\Parameter}} \leq \frac{\epsilon}{2}
\end{equation}
and
\begin{subequations}
	\label{e.neighbourhood-small-enough-proof}
\begin{align}
\label{e.neighbourhood-small-enough-proof-depart}
V_{\Parameter_0}  & \subset B_{\PhaseSpaceVF}\left(0,\frac{\hat{\eta}}{2}\right) \\
\label{e.neighbourhood-small-enough-proof-arrivee}
U_{\Parameter_0} & \subset B_{\PhaseSpaceVF}\left(0,\frac{\hat{\eta}}{2}\right)
\end{align}
\end{subequations}
where
\begin{align*}
V_{\Parameter_0} &\egaldef L \left(\tilde{V}_{\Parameter_0} \right) \\
\tilde{V}_{\Parameter_0} &\egaldef B_{\StableSpace_{\Parameter_0}}\left(0,\frac{\tilde{\delta}}{K}\right) \times B_{\UnstableSpace}\left(0,\frac{\tilde{\delta}}{K}\right) \\
U_{\Parameter_0}& \egaldef L\left(\tilde{U}_{\Parameter_0}\right) \\
\tilde{U}_{\Parameter_0} &\egaldef \psi\left(B_{\StableSpace_{\Parameter_0}}\left(0,\frac{\tilde{\delta}}{K}\right) \times B_{\UnstableSpace}\left(0,\frac{\tilde{\delta}}{K}\right)\right) \\
\hat{\eta} &\egaldef \frac{\tilde{\eta} \EVII{\Parameter_0}}{\ConstantEstimateStableManifoldThm \ConstanteControleExpMatrix{\OriginDiff_{\Parameter_0}}}
\end{align*}
and $L:\StableSpace_{\Parameter_0} \times \UnstableSpace \to \PhaseSpaceVF$ is the canonical isomorphism $(z,\Parameter) \mapsto z+\Parameter$. According to lemma~\ref{lemma.triangular-inequality-inverse-angle-two-spaces}, we have
\begin{equation}\label{e.control-identification-canonique}
	\normesub{L} \leq 1, \quad \normesub{L^{-1}} \leq \ConstanteInegTriangInverse{\StableSpace_{\Parameter_0},\UnstableSpace},
\end{equation}
the linear spaces $\PhaseSpaceVF$, $\StableSpace_{\Parameter_0}$ and $\UnstableSpace$ being equipped with the Euclidean norm and the linear space $\StableSpace_{\Parameter_0} \times \UnstableSpace$ being equipped with $\norme{(z,\Parameter)} = \norme{z}+\norme{\Parameter}$.
Using~\eqref{e.estimate-diff-h} with $\epsilon=1$, we get that $\psi$ is injective on $\tilde{V}_{\Parameter_0}$ and according to the global inverse function theorem, $\psi$ is invertible on $\tilde{V}_{\Parameter_0}$. Let us define $\tilde{\SLC}_{\Parameter_0}$ as the local inverse of $\psi$:
\begin{equation*}
	\tilde{\SLC}_{\Parameter_0} \egaldef \left(\psi_{|\tilde{V}_{\Parameter_0}}\right)^{-1}: \tilde{U}_{\Parameter_0} \to \tilde{V}_{\Parameter_0}
\end{equation*}
and let
\begin{equation}\label{e.defin-local-coord-straight-proof}
	\SLC_{\Parameter_0} = L \circ \tilde{\SLC}_{\Parameter_0} \circ L^{-1} : U_{\Parameter_0} \to V_{\Parameter_0}
\end{equation}
\proofstep{Proof of item~\eqref{thm.straightening}}
The first thing to remark is the fact that $\psi$ is constructed so that it maps straight lines to the graphs induced by $\SMTGraphMap$: more precisely, we have, for every $\Parameter \in \UnstableSpace$,
\begin{equation}\label{e.straight-lines-to-graphs}
	\psi\left(\StableSpace_{\Parameter_0}  \times \{\Parameter\} \right) = (0,\Parameter) + \Graph \SMTGraphMap_\Parameter
\end{equation}
Identifying $\Graph \SMTGraphMap_\Parameter \subset \StableSpace_{\Parameter_0} \times \UnstableSpace$ with its image in $\PhaseSpaceVF$, we have, for every $\Parameter \in B_{\UnstableSpace}\left(0,\frac{\tilde{\delta}}{K}\right)$, 
\begin{align*} 
\left(\Parameter + \Graph \SMTGraphMap_\Parameter\right) \cap U_{\Parameter_0} 
&= \Parameter +  \left(\Graph \SMTGraphMap_\Parameter\right) \cap \underbracket [1pt][7pt] {\left( U_{\Parameter_0} - \Parameter\right)}_{\subset B_{\PhaseSpaceVF}(0,\hat{\eta})} & \text{using~\eqref{e.neighbourhood-small-enough-proof-arrivee}}  \\
&=\left(\Parameter+ \LSMexp{0,\VFParam_\Parameter}{\ExpDecaySpeed}{\tilde{\eta}} \right) \cap U_{\Parameter_0} & \text{using~\eqref{e.corollary-graph-local-structure}}
\end{align*}
so, using~\eqref{e.stable-manifold-locally-coincide-2}, we get that
\begin{equation}\label{e.equality-graph-local-stable-set}
	\left(\Parameter + \Graph \SMTGraphMap_\Parameter\right) \cap U_{\Parameter_0} =\LSMexp{\Parameter,\FixedVF}{\ExpDecaySpeed}{\tilde{\eta}} \cap U_{\Parameter_0}
\end{equation}
Since the family $(\LSMexp{\Parameter,\FixedVF}{\ExpDecaySpeed}{\tilde{\eta}})_{\Parameter \in B_{\UnstableSpace}\left(0,\frac{\tilde{\delta}}{K}\right)}$ is pairwise disjoint, the family \begin{equation*}
	\left(\left(\Parameter + \Graph \SMTGraphMap_\Parameter\right) \cap U_{\Parameter_0}\right)_{\Parameter \in B_{\UnstableSpace}\left(0,\frac{\tilde{\delta}}{K}\right)}
\end{equation*}
is also pairwise disjoint. The preceding remark allows us to write
\begin{align*}
U_{\Parameter_0} 
&= \SLC_{\Parameter_0}^{-1}(V_{\Parameter_0}) \\
&= \SLC_{\Parameter_0}^{-1}\left(\bigsqcup_{\Parameter \in B_{\UnstableSpace}\left(0,\frac{\tilde{\delta}}{K}\right)} \left(\Parameter + \StableSpace_{\Parameter_0} \right) \cap V_{\Parameter_0} \right)  \\
&= \bigsqcup_{\Parameter \in B_{\UnstableSpace}\left(0,\frac{\tilde{\delta}}{K}\right)} \SLC_{\Parameter_0}^{-1}\left(\left( \Parameter + \StableSpace_{\Parameter_0} \right) \cap V_{\Parameter_0}\right) & \text{by injectivity of $\SLC_{\Parameter_0}^{-1}$} \\
& \subset \bigsqcup_{\Parameter \in B_{\UnstableSpace}\left(0,\frac{\tilde{\delta}}{K}\right)} \left(\Parameter + \Graph \SMTGraphMap_\Parameter\right) \cap U_{\Parameter_0} & \text{using~\eqref{e.straight-lines-to-graphs}} \\
& \subset \bigsqcup_{\Parameter \in B_{\UnstableSpace}\left(0,\frac{\tilde{\delta}}{K}\right)} \LSMexp{\Parameter,\FixedVF}{\ExpDecaySpeed}{\tilde{\eta}} \cap U_{\Parameter_0} & \text{using~\eqref{e.equality-graph-local-stable-set}} \\
& \subset U_{\Parameter_0} 
\end{align*}
where $\sqcup$ denotes a disjoint union. It follows that all the preceding inclusions must be instead equality. As consequences, we get that the family $(\LSMexp{\Parameter,\FixedVF}{\ExpDecaySpeed}{\tilde{\eta}})_{\Parameter \in B_{\UnstableSpace}\left(0,\frac{\tilde{\delta}}{K}\right)}$ foliates $U_{\Parameter_0}$:
	\begin{equation}\label{e.local-manifolds-foliate-neighbourhood}
		U_{\Parameter_0}  = \bigsqcup_{\Parameter \in B_{\UnstableSpace}\left(0,\frac{\tilde{\delta}}{K}\right)} \LSMexp{\Parameter,\FixedVF}{\ExpDecaySpeed}{\tilde{\eta}} \cap U_{\Parameter_0}
	\end{equation}
and for every $\Parameter \in B_{\UnstableSpace}\left(0,\frac{\tilde{\delta}}{K}\right)$,
\begin{equation} \label{e.lines-maps-to-stable-manifolds-1}
	\SLC_{\Parameter_0}^{-1}\left(\left( \Parameter + \StableSpace_{\Parameter_0} \right) \cap V_{\Parameter_0}\right) = \LSMexp{\Parameter,\FixedVF}{\ExpDecaySpeed}{\tilde{\eta}} \cap U_{\Parameter_0}
\end{equation}
Let $\Parameter \in B_{\UnstableSpace}\left(0,\frac{\tilde{\delta}}{K}\right)$. Since any orbit contained in $\SM{\Parameter,\FixedVF}$ must eventually enter $U_{\Parameter_0}$, it follows from~\eqref{e.local-manifolds-foliate-neighbourhood} and the fact that such an orbit converges to $\Parameter$, that
\begin{equation}\label{e.lines-maps-to-stable-manifolds-2}
	\LSMexp{\Parameter,\FixedVF}{\ExpDecaySpeed}{\tilde{\eta}} \cap U_{\Parameter_0} = \SM{\Parameter,\FixedVF} \cap U_{\Parameter_0}
\end{equation}
According to~\eqref{e.lines-maps-to-stable-manifolds-1} and~\eqref{e.lines-maps-to-stable-manifolds-2}, we have, for every $\Parameter \in B_{\UnstableSpace}\left(0,\frac{\tilde{\delta}}{K}\right)$,
\begin{equation*}
	\SLC_{\Parameter_0}^{-1}\left(\left( \Parameter + \StableSpace_{\Parameter_0}\right) \cap V_{\Parameter_0}\right) = \SM{\Parameter,\FixedVF} \cap U_{\Parameter_0}
\end{equation*}
so item~\eqref{thm.straightening} holds true.
\proofstep{Proof of item~\eqref{thm.local-coord-form}} This is a direct consequence of~\eqref{e.defin-psi} and the fact that $\SMTGraphMap(0,\Parameter)=0$ (see~\eqref{e-controle-phi-0-local-estimates-corollary}).
\proofstep{Proof of items~\eqref{thm.size-neighbourhoods} and~\eqref{thm.close-Id}}
We have $\Diff{\tilde{\SLC}_{\Parameter_0}^{-1}} = \Id + \Diff{h}$ so
\begin{equation}\label{e-expression-inverse-Dpsi}
\Diff{\tilde{\SLC}_{\Parameter_0}} = \Id + \sum_{k \geq 1}(-1)^k \left(\Diff{h}\right)^k
\end{equation}
Using~\eqref{e.estimate-diff-h}, it follows that for every $0< \epsilon \leq 1$ and for every $(\VarPhaseSpaceStable,\Parameter) \in B_{\StableSpace_{\Parameter_0}}\left(0,\frac{\epsilon\tilde{\delta}}{K}\right) \times B_{\UnstableSpace}\left(0,\frac{\epsilon\tilde{\delta}}{K}\right)$,
\begin{equation}\label{e.comparaison-Id-LCS}
\normesub{\DiffPoint{\tilde{\SLC}_{\Parameter_0}}{z,\Parameter}-\Id} \leq  \epsilon
\end{equation}
According to~\eqref{e.neighbourhood-small-enough-proof-arrivee},~\eqref{e.comparaison-Id-LCS}, the mean value theorem and the fact that $\SMTGraphMap(0,0)=(0,0)$, there exists a contant $K' \geq K$ (independant of $\FixedVF$, $\TailleBoule$ and $\Parameter_0$)) such that
\begin{equation*}
B_{\StableSpace_{\Parameter_0}}\left(0,\frac{\tilde{\delta}}{K'}\right) \times B_{\UnstableSpace}\left(0,\frac{\tilde{\delta}}{K'}\right) \subset \tilde{U}_{\Parameter_0}  \cap \tilde{V}_{\Parameter_0}
\end{equation*}
We then use~\eqref{e.control-identification-canonique} to obtain that, for every $0 < \epsilon \leq 1$,
\begin{equation*}
	B_{\PhaseSpaceVF}(0,\epsilon R_{\Parameter_0}) \subset L\left( B_{\StableSpace_{\Parameter_0}}\left(0,\frac{\epsilon}{\ConstanteInegTriangInverse{\StableSpace_{\Parameter_0},\UnstableSpace}} \frac{\tilde{\delta}}{K'}\right) \times B_{\UnstableSpace}\left(0,\frac{\epsilon}{\ConstanteInegTriangInverse{\StableSpace_{\Parameter_0},\UnstableSpace}} \frac{\tilde{\delta}}{K'}\right) \right) \subset U_{\Parameter_0} \cap V_{\Parameter_0}
\end{equation*}
where $R_{\Parameter_0}$ is defined by~\eqref{e.definition-rayon-minimal} for some constant $\ConstIII$ large enough (independant of $\FixedVF$, $\TailleBoule$ and $\Parameter_0$). The above inclusion with $\epsilon=1$ proves that item~\eqref{thm.size-neighbourhoods} holds true. Even if it means taking $\ConstIII$ larger, item~\eqref{thm.close-Id} holds true as well, using~\eqref{e.estimate-diff-h},~\eqref{e.control-identification-canonique},~\eqref{e.comparaison-Id-LCS}, the mean value theorem and the fact that $\SMTGraphMap(0,0)=(0,0)$.

\proofstep{Proof of item~\eqref{thm.Ck-norms}} This is a consequence of~\eqref{e-expression-inverse-Dpsi},~\eqref{e.defin-psi},~\eqref{e-controle-phi-2-local-estimates-corollary},~\eqref{e.control-identification-canonique} and the fact that the sequence $(\NormeSupHigherDerivativesLocMax{k}{\FixedVF,\Parameter_0,\TailleBoule(\Parameter_0)})_{k \geq 2}$ is increasing.
\proofstep{Proof of item~\eqref{thm.local-unicity}}
One can construct, for any $\Parameter \in \TraceOpenSetDefinVF$, a local coordinate system $\SLC_{\Parameter}$ satisfying items~\eqref{thm.size-neighbourhoods}-\eqref{thm.Ck-norms} in the same way than $\SLC_{\Parameter_0}$ (and with the same constants). Let $\Parameter_1 \in \TraceOpenSetDefinVF$. By construction of $\SLC_{\Parameter_0}$ and $\SLC_{\Parameter_1}$ (see~\eqref{e.defin-psi} and~\eqref{e.defin-local-coord-straight-proof}), for all $\Parameter \in \TraceOpenSetDefinVF \cap U_{\Parameter_0} \cap U_{\Parameter_1}$ and for all $y \in \LSMexp{\Parameter,\FixedVF}{\ExpDecaySpeed}{\tilde{\eta}} \cap U_{\Parameter_0} \cap U_{\Parameter_1}$, we have
\begin{equation*}
	\begin{cases}
	\SLC_{\Parameter_0}(y) &= \Parameter + \pi_{\Parameter_0}(y) \\
	\SLC_{\Parameter_1}(y) &= \Parameter + \pi_{\Parameter_1}(y)
	\end{cases}
\end{equation*}
where $\pi_{\Parameter}$ denotes the linear projection along $\UnstableSpace$ onto $\StableSpace_{\Parameter}$. It follows that for all $y \in U_{\Parameter_0} \cap U_{\Parameter_1}$, we have
\begin{equation*}
	\SLC_{\Parameter_0}(y) - \SLC_{\Parameter_1}(y) = \pi_{\Parameter_0}(y) - \pi_{\Parameter_1}(y)
\end{equation*}
so item~\eqref{thm.local-unicity} holds true.
\end{proof}

\appendix

\section{Some linear algebra lemmas}
\label{section.appendix-lemmas-algebra}
We recall here some elementary facts of linear algebra that will be of great importance throughout this paper. We refer to section~\ref{section.notations} for the notations.
\begin{lemme}\label{lemma.triangular-inequality-inverse-angle-two-spaces}
	Let $n \in \N$ and let $F,G$ be two linear subspaces of $\R^n$ such that $F \cap G = \{ 0\}$. For every $x=x_F + x_G \in F \oplus G$,
	\begin{equation*}
	\norme{x_F}_2 + \norme{x_G}_2 \leq \ConstanteInegTriangInverse{F,G}\norme{x}_2
	\end{equation*}
\end{lemme}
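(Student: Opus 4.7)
The plan is to estimate $\norme{x}_2^2$ from below by polarization, using the definition of $\Angle{F,G}$ to control the cross-term $\scal{x_F}{x_G}$, and then converting the resulting lower bound on $\norme{x_F}_2^2+\norme{x_G}_2^2$ into an upper bound on $\norme{x_F}_2+\norme{x_G}_2$ via the elementary inequality $(a+b)^2\leq 2(a^2+b^2)$.

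First I would dispose of the trivial cases where $x_F=0$ or $x_G=0$ (here $\ConstanteInegTriangInverse{F,G}\geq\sqrt{2}\geq 1$ makes the inequality immediate). Assuming both summands are nonzero and setting $\theta\egaldef\Angle{F,G}$, the definition of the angle as the minimal unsigned angle between non-zero vectors of $F$ and $G$ yields $\abs{\scal{x_F}{x_G}}\leq\cos\theta\cdot\norme{x_F}_2\norme{x_G}_2$. Substituting into the expansion $\norme{x}_2^2=\norme{x_F}_2^2+\norme{x_G}_2^2+2\scal{x_F}{x_G}$ gives the lower bound $\norme{x}_2^2\geq\norme{x_F}_2^2+\norme{x_G}_2^2-2\cos\theta\cdot\norme{x_F}_2\norme{x_G}_2$.

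The key step is then a one-line algebraic identity: the right-hand side equals $(1-\cos\theta)(\norme{x_F}_2^2+\norme{x_G}_2^2)+\cos\theta(\norme{x_F}_2-\norme{x_G}_2)^2$, and I discard the second (non-negative) term, which is legal because $\theta\in[0,\pi/2]$ so $\cos\theta\geq 0$. Combining with $(\norme{x_F}_2+\norme{x_G}_2)^2\leq 2(\norme{x_F}_2^2+\norme{x_G}_2^2)$ and taking square roots produces the constant $\sqrt{2/(1-\cos\theta)}=\ConstanteInegTriangInverse{F,G}$, as desired.

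There is essentially no obstacle here; this is the standard reverse triangle inequality on a direct sum, and the whole argument collapses to one completing-the-square identity. The only point requiring care is conceptual: one must confirm that the "minimal unsigned angle" convention places $\theta$ in $[0,\pi/2]$ (so $\cos\theta\geq 0$), which is the natural reading since replacing a vector in $F$ or $G$ by its opposite must not change the angle.
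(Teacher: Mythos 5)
Your proof is correct and follows essentially the same route as the paper's: reduce to the scalar inequality $\dfrac{(a+b)^2}{a^2+b^2-2ab\cos\theta}\leq\dfrac{2}{1-\cos\theta}$ and observe $\cos\theta\geq0$. The paper simply leaves that scalar inequality as ``straightforward,'' whereas you supply the completing-the-square identity that verifies it; this is a welcome clarification, not a different method.
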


\begin{proof}
	Recall that $\ConstanteInegTriangInverse{F,G} = \left(\frac{2}{1-\cos \Angle{F,G}}\right)^{\frac{1}{2}}$. Let $x=x_F + x_G \in F \oplus G$. It is sufficient to prove the straightforward inequality
	\begin{equation*}
		\frac{a^2+b^2+2ab}{a^2+b^2-2ab\epsilon} \leq \frac{2}{1-\epsilon}
	\end{equation*}
	where $a=\norme{x_F}_2^2$, $b=\norme{x_G}_2^2$ and $ \epsilon = \cos \Angle{F,G} \in [0,1[$.
\end{proof}

\begin{lemme}\label{lemma.triangular-inequality-inverse-angles}
	Let $n \in \N$ and $A \in \MatrixSet_n(\R)$. Let $\R^n = \oplus_{1 \leq i \leq r} E_i$ be the decomposition of $\R^n$ as the direct sum of the generalized eigenspaces of $A$. Accordingly, for any $x \in \R^n$, we will use the decomposition $x = \sum_{i=1}^{r} x_i$ where $x_i \in E_i$. The following control holds true for every $x \in \R^n$:
	\begin{equation*}
	\sum_{i=1}^{r}\norme{x_i}_2 \leq \ConstanteInegTriangInverse{A}\norme{x}_2
	\end{equation*}
\end{lemme}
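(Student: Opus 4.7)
The plan is to induct on $r$, the number of summands in the direct-sum decomposition. The base case $r=1$ is trivial, since $\ConstanteInegTriangInverse{A} \geq 1$ and $x=x_1$; the case $r=2$ is exactly lemma~\ref{lemma.triangular-inequality-inverse-angle-two-spaces} applied to $(E_1,E_2)$, noting that $\Angle{E_1,E_2}$ agrees with $\Angle{E_1,\dots,E_r}$ when $r=2$. Set $\alpha \egaldef \Angle{E_1,\dots,E_r}$ and
\begin{equation*}
K \egaldef \left(\frac{2}{1-\cos \alpha}\right)^{1/2} \geq \sqrt{2} > 1,
\end{equation*}
so that the target inequality reads $\sum_{i=1}^r \norme{x_i}_2 \leq K^{r-1} \norme{x}_2$.

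For the inductive step from $r-1$ to $r$, decompose any $x \in \R^n$ as $x = x_1 + y$ with $y \egaldef \sum_{i=2}^{r} x_i \in F \egaldef \bigoplus_{i=2}^{r} E_i$. Since $E_1 \cap F = \{0\}$, lemma~\ref{lemma.triangular-inequality-inverse-angle-two-spaces} yields
\begin{equation*}
\norme{x_1}_2 + \norme{y}_2 \leq \left(\frac{2}{1-\cos \Angle{E_1,F}}\right)^{1/2} \norme{x}_2 \leq K\, \norme{x}_2,
\end{equation*}
the second inequality using that $\Angle{E_1,F}=\Angle{E_1,\bigoplus_{j \neq 1} E_j} \geq \alpha$ directly from the definition of $\alpha$. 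The main subtle point is to apply the inductive hypothesis to $y$ within $F$ with its decomposition into the $r-1$ subspaces $E_2,\dots,E_r$: this requires observing the monotonicity of angles with respect to inclusion of subspaces, namely that $\Angle{E_i,V} \geq \Angle{E_i,W}$ whenever $V \subset W$ (fewer competing directions means a larger minimal angle). Applied to $V = \bigoplus_{j \geq 2,\, j \neq i} E_j \subset W = \bigoplus_{j \neq i} E_j$ for each $i \in \{2,\dots,r\}$, this gives $\Angle{E_2,\dots,E_r} \geq \alpha$, so that the inductive hypothesis produces
\begin{equation*}
\sum_{i=2}^{r} \norme{x_i}_2 \leq K^{r-2} \norme{y}_2.
\end{equation*}

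Combining the two bounds and exploiting $K \geq 1$, hence $K^{r-2} \geq 1$ for $r \geq 2$, we get
\begin{equation*}
\sum_{i=1}^{r} \norme{x_i}_2 = \norme{x_1}_2 + \sum_{i=2}^{r} \norme{x_i}_2 \leq \norme{x_1}_2 + K^{r-2} \norme{y}_2 \leq K^{r-2}\bigl(\norme{x_1}_2 + \norme{y}_2\bigr) \leq K^{r-1} \norme{x}_2,
\end{equation*}
which closes the induction. The only conceptual step beyond bookkeeping is the angle-monotonicity observation described above; the rest is a routine recursion that consumes exactly one factor of $K$ per additional summand, matching the exponent $(r-1)/2$ appearing in the definition~\eqref{e.ConstanteInegTriangInverse} of $\ConstanteInegTriangInverse{A}$.
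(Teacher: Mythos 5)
Your proof is correct and takes the same approach the paper indicates (a straightforward induction on $r$ using Lemma~\ref{lemma.triangular-inequality-inverse-angle-two-spaces}). You are right to make explicit the two minor points the paper leaves implicit: the angle-monotonicity $\Angle{E_i,V}\geq\Angle{E_i,W}$ for $V\subset W$, which lets the inductive hypothesis be invoked with the same base constant $K$, and the use of $K^{r-2}\geq 1$ to absorb $\norme{x_1}_2$ into the factored expression.
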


\begin{proof}
The proof is a straightforward induction on the number $r$ of generalized eigenspaces of $A$, using lemma~\ref{lemma.triangular-inequality-inverse-angle-two-spaces}.
\end{proof}

\begin{lemme}\label{lemma.controle-polynomial-exp-matrices}
	Let $n \in \N$ and $A \in \MatrixSet_n(\R)$. We have, for every $\BorneVPStable > \SpectreUp{A}$ and for every $s \geq 0$,
	\begin{equation*}\label{e.controle-matrice-exp-general}
	\normesub{e^{sA}}_2 \leq 2^{n-1}(n-1)^{n-1}  \frac{\max\left(1,\normesub{A}_2\right)^{n-1}\ConstanteInegTriangInverse{A}}{\min\left(1,\BorneVPStable - \SpectreUp{A}\right)^{n-1}} e^{\BorneVPStable s} 
	\end{equation*}
\end{lemme}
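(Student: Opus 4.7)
The plan is to decompose $A$ via its real Jordan--Chevalley decomposition on each generalized eigenspace, control the semisimple and nilpotent contributions separately, and absorb the polynomial growth coming from the nilpotent part into the spare exponential factor $e^{(\beta - \SpectreUp{A})s}$. First, I would use lemma~\ref{lemma.triangular-inequality-inverse-angles} to reduce the problem to each (real) generalized eigenspace $E_i$ of $A$: since every $E_i$ is $A$-invariant (hence $e^{sA}$-invariant), writing $x = \sum_i x_i$ with $x_i \in E_i$ gives
\[
\norme{e^{sA} x}_2 \leq \sum_i \norme{e^{s A_{|E_i}} x_i}_2 \leq \ConstanteInegTriangInverse{A} \cdot \norme{x}_2 \cdot \max_i \normesub{e^{s A_{|E_i}}}_2,
\]
so it remains to bound each $\normesub{e^{s A_{|E_i}}}_2$.

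On each $E_i$, I would write $A_{|E_i} = B_i + N_i$ with $B_i$ semisimple (spectrum reduced to $\{\lambda_i\}$, or $\{\lambda_i, \bar\lambda_i\}$ if $\lambda_i \notin \R$) and $N_i$ nilpotent of index at most $d_i = \dim E_i \leq n$, with $[B_i, N_i] = 0$. This factorizes $e^{s A_{|E_i}} = e^{s B_i}\, e^{s N_i}$. The semisimple part satisfies $\normesub{e^{s B_i}}_2 \leq e^{\RealPart(\lambda_i) s} \leq e^{\SpectreUp{A} s}$, using the standard form of $B_i$ as a direct sum of $2\times 2$ rotation-homothety blocks $\bigl(\begin{smallmatrix} a & -b \\ b & a \end{smallmatrix}\bigr)$ (of operator norm $|\lambda_i|$) and scalar blocks. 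The nilpotent part is bounded by its truncated Taylor series $\normesub{e^{s N_i}}_2 \leq \sum_{k=0}^{n-1} s^k \normesub{N_i}_2^k / k!$, with $\normesub{N_i}_2 \leq \normesub{A_{|E_i}}_2 + \normesub{B_i}_2 \leq 2 \normesub{A}_2$, since the operator norm of $B_i$ equals its spectral radius $|\lambda_i| \leq \normesub{A}_2$.

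Finally, I would absorb the polynomial factor into the exponential $e^{(\beta - \SpectreUp{A})s}$. Setting $\alpha = \beta - \SpectreUp{A} > 0$, the elementary calculus inequality $\sup_{s \geq 0} s^k e^{-\alpha s} = (k/(\alpha e))^k$ combined with $k^k \leq e^k k!$ gives $s^k e^{-\alpha s}/k! \leq \alpha^{-k}$, whence
\[
e^{-\alpha s} \sum_{k=0}^{n-1} \frac{s^k (2 \normesub{A}_2)^k}{k!} \leq \sum_{k=0}^{n-1} \left(\frac{2 \normesub{A}_2}{\alpha}\right)^{\!k} \leq n \, \max\!\left(1, \frac{2\normesub{A}_2}{\alpha}\right)^{\!n-1}.
\]
Combining the three steps produces a bound of the stated shape, with the prefactor $n \cdot 2^{n-1}$ absorbed into $2^{n-1}(n-1)^{n-1}$ for $n \geq 3$ (the cases $n = 1, 2$ are checked directly). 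The main obstacle will be justifying the bound $\normesub{e^{s B_i}}_2 \leq e^{\RealPart(\lambda_i) s}$ in the Euclidean norm inherited from $\R^n$: when $\lambda_i \notin \R$, the real normal form of $B_i$ is realized in a basis that need not be orthonormal for the ambient norm, and one must argue that any resulting deviation is already subsumed by the factor $\ConstanteInegTriangInverse{A}$ extracted in the reduction step.
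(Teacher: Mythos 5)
Your outline matches the paper's through the reduction to generalized eigenspaces and the absorption of the polynomial factor into $e^{(\BorneVPStable-\SpectreUp{A})s}$, but the decomposition you use in the middle step is different and, as you rightly suspect, it breaks. You factor via Jordan--Chevalley, $A_{|E_i} = B_i + N_i$, and then need both $\normesub{e^{sB_i}}_2 \leq e^{\RealPart(\lambda_i)s}$ and $\normesub{B_i}_2 = \abs{\lambda_i}$. Each of these holds only when $B_i$ is \emph{normal} for the Euclidean inner product, which is not guaranteed: when $\lambda_i\notin\R$, the real normal form of $B_i$ (block rotation-homotheties) is reached by a similarity $P$ that need not be orthogonal, and both quantities inflate by the conditioning of $P$. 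Your hope that this is "subsumed by $\ConstanteInegTriangInverse{A}$" cannot work, because $\ConstanteInegTriangInverse{A}$ is built solely from the angles \emph{between distinct} generalized eigenspaces; it is literally equal to $1$ when $A$ has a single generalized eigenspace, yet the non-normality of $B_i$ lives entirely inside one such $E_i$. A concrete counterexample: take a $4\times 4$ matrix $P\begin{pmatrix}a&-b&1&0\\ b&a&0&1\\ 0&0&a&-b\\ 0&0&b&a\end{pmatrix}P^{-1}$ with $P$ far from orthogonal; there is one generalized eigenspace, $\ConstanteInegTriangInverse{A}=1$, but $\normesub{B_i}_2$ and $\normesub{e^{sB_i}}_2e^{-as}$ can both be made arbitrarily large.

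The paper avoids this by using a \emph{scalar} shift instead of the Jordan--Chevalley semisimple part. Working on the complex generalized eigenspace $\Ker(A-\mu_i\Id)^{d_i}\subset\C^n$ (the Hermitian norm on $\C^n$ restricts to the Euclidean norm on $\R^n$ and agrees on operator norms since $A$ is real), one writes $e^{sA}x_i = e^{s\mu_i}\,e^{s(A-\mu_i\Id)}x_i$. The scalar $e^{s\mu_i\Id}$ has norm exactly $e^{s\RealPart\mu_i}$ with no normality hypothesis, and $(A-\mu_i\Id)$ is nilpotent on $E_i$ with $\normesub{(A-\mu_i\Id)^j x_i}_2 \leq \normesub{A-\mu_i\Id}_2^j\norme{x_i}_2 \leq (2\normesub{A}_2)^j\norme{x_i}_2$ by bounding the \emph{full} operator $A-\mu_i\Id$ and $\abs{\mu_i}\leq\normesub{A}_2$ — no restricted-norm or semisimple-norm estimate is ever needed. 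That is the missing move; the rest of your argument (triangle inequality via lemma~\ref{lemma.triangular-inequality-inverse-angles}, truncated exponential series, elementary calculus absorption) then closes correctly.
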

\begin{proof}
	Fix $ \SpectreUp{A} < \BorneVPStable \leq \SpectreUp{A}+1$ and $s \geq 0$.
	Let 
\begin{equation*}
	\R^n = \oplus_{1 \leq i \leq r}  \Ker \left(A - \mu_i \Id\right)^{d_i}
\end{equation*}
be the decomposition of $\R^n$ as the direct sum of the generalized eigenspaces of $A$.
Fix $x = \sum_{i=1}^{r} x_i \in \R^n$, where $x_i \in \Ker \left(A - \mu_i \Id\right)^{d_i}$. For every $1 \leq i \leq r$.
\begin{align*}
	\norme{e^{sA}x_i}_2 & = \norme{e^{s\mu_i \Id}e^{s(A - \mu_i \Id)} x_i}_2 \\
	& = \norme{e^{s\mu_i \Id} \sum_{j=0}^{d_i-1}\frac{s^j}{j!}(A-\mu_i \Id)^jx_i}_2 \\
	& \leq e^{s \RealPart(\mu_i)}\norme{x_i}_2 \sum_{j=0}^{d_i-1}\frac{s^j}{j!} (2\normesub{A}_2)^j   \\
	& \leq e^{s\BorneVPStable}\norme{x_i}_2 2^{d_i-1} \max\left(1,\normesub{A}_2\right)^{d_i-1} e^{s(\RealPart(\mu_i)-\BorneVPStable)}(1+s)^{d_i-1}
\end{align*}
where we used the fact that $\abs{\mu_i} \leq \normesub{A}_2$ by Browne theorem.
By a straightforward computation, we obtain
\begin{equation*}
	\max_{t \geq 0} e^{t(\RealPart(\mu_i)-\BorneVPStable)}(1+t)^{d_i-1} \leq
\begin{cases}
\frac{(d_i-1)^{d_i-1}}{\left(\BorneVPStable - \RealPart(\mu_i)\right)^{d_i-1}} & \text{if $\BorneVPStable - \RealPart(\mu_i) \leq d_i-1$} \\
1 & \text{if $\BorneVPStable - \RealPart(\mu_i) > d_i-1$}
\end{cases}
\end{equation*}
It follows that
\begin{equation*}
	\norme{e^{sA}x}_2 \leq e^{s\BorneVPStable}2^{n-1}(n-1)^{n-1}  \frac{\max\left(1,\normesub{A}_2\right)^{n-1}}{\min\left(1,\BorneVPStable - \SpectreUp{A}\right)^{n-1}} \sum_{i=1}^{r}\norme{x_i}_2
\end{equation*}
Using lemma~\ref{lemma.triangular-inequality-inverse-angles}, we obtain the desired inequality.
\end{proof}

\printbibliography
\end{document}